\def\editmode{0}
\def\bibfilenames{bibman_refs}
\def\spsformat{0}
\def\singlenarrowcol{0} 
\newcommand{\acom}[1]{\textcolor{red}{{[#1]}}} 
\newcommand{\arev}[1]{\textcolor{blue}{#1}} 
\newcommand{\cmt}[1]{\noindent\textcolor{darkgreen}{\underline{[#1]}}} 
\newcommand{\hc}[1]{\textcolor{blue}{#1}} 
\newlist{bulletlist}{enumerate}{9}
\setlist[bulletlist,1]{label=$\bullet$}
\setlist[bulletlist,2]{label=$\diamond$}
\setlist[bulletlist,3]{label=$\rightarrow$}
\setlist[bulletlist,4]{label=$\circ$}
\setlist[bulletlist,5]{label=$\square$}
\setlist[bulletlist,6]{label=$\star$}
\setlist[bulletlist,7]{label=$\checkmark$}
\setlist[bulletlist,8]{label=$\Delta$}
\setlist[bulletlist,9]{label=$-$}
\newenvironment{bullets}{\begin{bulletlist}}{\end{bulletlist}}
\newcommand{\blt}[1][noargpassed]{
  \item%
  \ifthenelse{\equal{#1}{noargpassed}}{}{\cmt{#1}}%
}
\newcommand{\cmt}[1]{} 
\newcommand{\hc}[1]{\textcolor{black}{#1}} 
\newenvironment{bullets}{}{}
\newcommand{\blt}[1][noargpassed]{\ignorespaces}
\newcommand{\printmybibliography}{
\bibliography{\bibfilenames}
}
\newtoks\@soltoks
\newcommand\printquizsolutions{%
  \the\@soltoks}
\newcommand\quizsolution[1]{%
  \begin{flushright}
    See the solution \hyperref[{qxsol:\themyquiz}]{here}.
  \end{flushright}
  \edef\@soltemp{%
    \the\@soltoks 
    \noexpand\solutionpreamble{\themyquiz}
    \unexpanded{
      #1
    }%
  }
  \global\@soltoks=\expandafter{\@soltemp}
}
\newcommand{\solutionpreamble}[1]{%
  \par\medskip\noindent\section{Solution to Quiz~\ref{qx:#1}}\label{qxsol:#1}\enspace\ignorespaces
}
\newcounter{rulecounter}
\newcommand{\resetrule}{ \setcounter{rulecounter}{0}}
\newtheorem{myauxproblem}{Problem}
\newtheorem{myauxoptionalproblem}{Optional Problem}
\newenvironment{salign}[1][0]{\subequations \if#10  \else \label{#1} \fi \align}{\endalign\endsubequations}
\newsavebox{\selvestebox}
\newenvironment{colbox}[1]
  {\newcommand\colboxcolor{#1}%
   \begin{lrbox}{\selvestebox}%
   \begin{minipage}{\linewidth}}
  {\end{minipage}\end{lrbox}%
   \begin{center}
   \colorbox{\colboxcolor}{\usebox{\selvestebox}}
   \end{center}}
\definecolor{orange}{rgb}{1,0.8,0}
\definecolor{gray}{rgb}{.9,0.9,0.9}
\definecolor{darkgray}{rgb}{.3,0.3,0.3}
\definecolor{darkblue}{rgb}{.1,0.0,0.3}
\definecolor{lightblue}{rgb}{0.7,0.7,1}
\definecolor{lightred}{rgb}{1,0.7,.7}
\definecolor{purple}{RGB}{204,153,255}
\definecolor{lightgray}{rgb}{.95,0.95,0.95}
\definecolor{lightgreen}{rgb}{0.6,0.8,0.6}
\definecolor{darkgreen}{rgb}{0.05,0.3,0.05}
\definecolor{pistachio}{RGB}{204,255,153}
\definecolor{paleturquoise}{RGB}{175,238,238}
\definecolor{yellow}{RGB}{255,255,153}
\newcommand{\ra}{$\rightarrow$~}
\newcommand{\tbm}[1]{{\tilde{\bm #1}}}
\newcommand{\inv}{^{-1}}
\newcommand{\rfield}{\mathbb{R}}
\newcommand{\transpose}{^\top}
 \newcommand{\define}{:=}
\newcommand{\prob}{\mathop{\mathbb{P}} }
\newcommand{\expected}{\mathop{\textrm{E}}\nolimits}
\newcommand{\minimize}{\mathop{\text{minimize}}}
\newcommand{\st}{\mathop{\text{s.t.}}}
\newtheorem{myproposition}{Proposition}
\newtheorem{myquestion}{Question}
\newtheorem{myquiz}{Quiz}
\newtheorem{myremark}{Remark}
\newtheorem{myproblemstatement}{Problem}
\newtheorem{mylemma}{Lemma}
\newtheorem{mytheorem}{Theorem}
\newtheorem{mydefinition}{Definition}
\newtheorem{mycorollary}{Corollary}
\newtheorem{myexample}{Example}
\newenvironment{changes}{\color{black}}{}
\newenvironment{changesv2}{\color{black}}{}
\newenvironment{changesv3}{\color{black}}{}
\newcommand{\arevtwo}[1]{\textcolor{black}{#1}} 
\newcommand{\arevthree}[1]{\textcolor{black}{#1}} 
\def\journal{1}
\journal0    
\journal1
\journal2
        \newenvironment{journalonly}{\color{darkyellow}}{}
\def\extended{1} 
        \newenvironment{extendedonly}{\color{violet}}{}
        \newenvironment{nonextendedonly}{\color{teal}}{}
\def\intermediatesteps{0}
    \newcommand{\jumpline}{\\}
    \newcommand{\alignchar}{&}
    \newcommand{\jumpline}{}
    \newcommand{\alignchar}{}
\renewcommand{\arev}[1]{{#1}} 
\begin{document}

\title{Path Planning for Aerial Relays \\ via Probabilistic Roadmaps
}

\if\spsformat1
    \name{Pham Q. Viet as Daniel Romero}

    Author(s) Name(s)\thanks{Thanks to XYZ agency for funding.}}
    \address{Author Affiliation(s)}
\else
    \author{
        Pham Q. Viet and Daniel Romero
        \\
        Dept. of Information and Communication Technology,
        University of Agder, Grimstad, Norway.\\
        Email:\{viet.q.pham,daniel.romero\}@uia.no.
    }
\fi

\maketitle

\begin{abstract}
    \begin{changesv2}
        Autonomous uncrewed aerial vehicles (UAVs) can be utilized as aerial relays
        to serve users far from  terrestrial infrastructure. Unfortunately, existing algorithms for aerial relay path planning cannot  accommodate
        general flight constraints or channel models. This is  required in practice due to connectivity constraints,
        the presence of obstacles (e.g.
        buildings), and regulations.  This paper proposes a framework  that overcomes
        these  limitations by spatially discretizing the flight region. To cope with
        the resulting exponential growth in complexity, the framework  adopts
        a probabilistic roadmap approach, where a shortest path is found through a
        graph of randomly generated states. To attain high optimality with
        affordable complexity, the probability distribution used to generate these
        states is designed based  on heuristic path planners with theoretical
        guarantees. The  algorithms derived in this framework not only overcome the main limitations of existing schemes but also entail smaller computational complexity.
        Extensive  theoretical and numerical results corroborate the merits of the proposed approach.
    \end{changesv2}

\end{abstract}

\newcommand{\journalold}[1]{\begin{journalonly}#1\end{journalonly}} 

\newcommand{\hcc}[1]{\textcolor{red}{#1}}

\renewcommand{\expected}{\mathbb{E}}
\renewcommand{\define}{\triangleq} 

\newcommand{\induav}{{\hc{k}}}
\newcommand{\numuav}{{\hc{K}}}
\newcommand{\dist}{{\hc{d}}} 
\newcommand{\distbsue}{{\dist_{\text{BS}}^{\text{UE}}}} 
\newcommand{\distbsuemin}{\check {\dist}_{\text{BS}}^{\text{UE}}}
\newcommand{\distbsuemax}{\hat {\dist}_{\text{BS}}^{\text{UE}}}
\newcommand{\distmax}{{\hc{d}_\text{max}}}
\newcommand{\destset}{{\hc{\mathcal{D}}}} 
\newcommand{\destsetpre}{{\hc{\tilde{\mathcal{D}}}}} 
\newcommand{\destsetuo}{{\hc{\mathcal{D}_1}}} 
\newcommand{\destsetut}{{\hc{\mathcal{D}_2}}} 

\newcommand{\funobj}{{\hc{J}}}

\newcommand{\indwp}{{\hc{n}}} 
\newcommand{\indfirstrepeatedwp}{\bar{\indwp}} 
\newcommand{\numwp}{{\hc{N}}} 
\newcommand{\numwpten}{{\numwp}} 
\newcommand{\indwpwithwaits}{{\hc{\tilde n}}} 
\newcommand{\numwpwithwaits}{{\hc{\tilde N}}} 
\newcommand{\numwptrim}{{\hc{\check N}}} 
\newcommand{\prnumwp}{{\hc{ N}^\text{PR}}} 
\newcommand{\numwpue}{\numwp_{\text{UE}}} 

\newcommand{\numwptakeoff}{\numwp_{\text{tko}}} 

\newcommand{\indaux}{{\hc{i}}} 
\newcommand{\indlift}{{\hc{u}}} 
\newcommand{\numlift}{{\hc{U}}} 
\newcommand{\numlifttotop}{\bar{\numlift}} 
\newcommand{\numliftup}{{\hc{U^{\uparrow}}}} 
\newcommand{\maxnumliftup}{\indlift^\uparrow_\text{max}} 
\newcommand{\numliftdown}{{\hc{U^{\downarrow}}}} 

\newcommand{\weightfun}{{\hc{w}}} %
\newcommand{\weightpenalty}{{\weightfun_\text{p}}} %
\newcommand{\liftfun}{{\hc{L}}} %
\newcommand{\trimfun}{{\hc{Z}}} %
\newcommand{\addfun}{{\hc{A}}} %
\newcommand{\lifttrimfun}{\bar{\liftfun}} %

\newcommand{\ttc}{{\hc{T}_\text{c}}} 
\newcommand{\trajectory}{{\hc{\bm Q(\cdot)}}} %
\newcommand{\opttrajectory}{{\hc{\bm Q^*(\cdot)}}} %
\newcommand{\trajectoryue}{\pathsingle_\text{UE}} %
\newcommand{\pathsingle}{\hc{p}} %
\newcommand{\trajuavsecup}{\pathsingle^{\uparrow}} %
\newcommand{\trajuavsectop}{\overset{\rightarrow}{\pathsingle}} %
\newcommand{\trajuavsecdown}{\pathsingle^{\downarrow}} %

\newcommand{\pathuavs}{\trajectorywps}
\newcommand{\trajdouble}{\trajectorywps} %

\newcommand{\trajectorywps}{\hc{{P}}} 
\newcommand{\trajectorywpsvalid}{{\trajectorywps^{\text{V}}}} 
\newcommand{\trajectorywpsfeas}{{\trajectorywps^{\text{F}}}} 
\newcommand{\trajectorywpspr}{{\trajectorywps^{\text{PR}}}} 
\newcommand{\trajminoutage}{{\trajectorywps_{\text{min}}}} 

\newcommand{\maxbheight}{{\hc{h}}} 
\newcommand{\heighttop}{{\hc{h}}_{\text{top}}} %
\newcommand{\higherbheight}{{\hc{h}}_{\text{max}}} 
\newcommand{\coorhorizon}{{\hc{x}}}
\newcommand{\coorvertical}{{\hc{y}}}
\newcommand{\zmax}{{\hc{z}_\text{max}}}

\newcommand{\indicatorlos}{\hc{\mathcal{L}}}
\newcommand{\indicatoradjacent}{\hc{\mathcal{A}}}
\newcommand{\indicatorconnect}{\hc{\mathcal{C}}}
\newcommand{\funindicator}{\hc{\mathcal{I}}}

\newcommand{\indsecond}{\hc{j}}
\newcommand{\indthird}{\hc{p}}
\newcommand{\indfourth}{\hc{q}}

\newcommand{\logicaland}{\hc{\wedge}}

\newcommand{\region}{\hc{\mathcal{S}}}
\newcommand{\flyregion}{\hc{\bar{\mathcal{F}}}}
\newcommand{\oobregion}{\hc{\mathcal{F}}}
\newcommand{\grid}{{\hc{\bar{\mathcal{F}}}_\text{G}}}
\newcommand{\adjset}{{\hc{\mathcal{E}_{\grid}}}} 
\newcommand{\confspacegrid}{{\hc{\mathcal{Q}}_\text{G}}}
\newcommand{\gridpt}{\hc{\bm x}^{\grid}}
\newcommand{\gridptlosBS}{\hc{\bm x}^{\setlosbs}}
\newcommand{\gridptloslosBS}{\hc{\bm x}^{\setloslosbs}}
\newcommand{\gridptind}{\hc{g}}
\newcommand{\gridptnum}{\hc{G}}
\newcommand{\gridnumlosbs}{\hc{G}_\text{BS}}
\newcommand{\gridnumloslosbs}{\tilde{\hc{G}}_\text{BS}}
\newcommand{\gridmaxliftlevel}{{\hc{\bar{\mathcal{F}}}_\text{G}^\text{max}}}

\newcommand{\numgridpt}{\hc{N}_{\text{grid}}}
\newcommand{\numgridptx}{\hc{N}^{\text{grid}}_{\text{x}}}
\newcommand{\numgridpty}{\hc{N}^{\text{grid}}_{\text{y}}}
\newcommand{\numgridptz}{\hc{N}^{\text{grid}}_{\text{z}}}

\newcommand{\numres}{\hc{N}_{\text{R}}} 

\newcommand{\trajectorypt}{\hc{\bm x}}

\newcommand{\capacity}{\hc{c}}
\newcommand{\capacitysnr}{\hc{c}_\text{SNR}}
\newcommand{\capacitydist}{{\bar{c}}}
\newcommand{\txpower}{\hc{P_\text{t}}}
\newcommand{\rxpowerfull}{\hc{A}}
\newcommand{\txgain}{\hc{G_\text{t}}}
\newcommand{\rxgain}{\hc{G_\text{r}}}

\newcommand{\patlosexp}{\hc{\eta}}
\newcommand{\horizon}{\hc{T}}

\newcommand{\rate}{\hc{r}}
\newcommand{\rateoverbw}{\hc{a}}
\newcommand{\rateueoveruav}{\hc{\ell}}
\newcommand{\rateset}{\hc{\mathcal{R}}}
\newcommand{\rateuavone}{\hc{r}_{\text{1}}}
\newcommand{\rateuavtwo}{\hc{r}_{\text{2}}}
\newcommand{\uerate}{\hc{r}_{\text{UE}}}
\newcommand{\ueratey}{\hc{r}_{\text{UE}}^{(\text{y})}}
\newcommand{\ueratex}{\hc{r}_{\text{UE}}^{(\text{x})}}

\newcommand{\uerateapprox}{\hc{\tilde{r}}_{\text{UE}}^{(\text{x})}}
\newcommand{\minuerate}{\hc{r}_{\text{UE}}^{\text{min}}}
\newcommand{\minuavrate}{\hc{r}_{\text{CC}}}

\newcommand{\loc}{\hc{\bm q}}
\newcommand{\locaux}{\hc{\bm \zeta}}
\newcommand{\locrnd}{\hc{\tbm q}}
\newcommand{\locx}{\hc{x}}
\newcommand{\locy}{\hc{y}}
\newcommand{\locz}{\hc{z}}
\newcommand{\locbs}{\hc{\bm q}_{\text{BS}}}
\newcommand{\locxbs}{\hc{x}_{\text{BS}}}
\newcommand{\locybs}{\hc{y}_{\text{BS}}}
\newcommand{\loczbs}{\hc{z}_{\text{BS}}}
\newcommand{\locue}{\hc{\bm q}_{\text{UE}}}
\newcommand{\locxue}{\hc{x}_{\text{UE}}}
\newcommand{\locyue}{\hc{y}_{\text{UE}}}
\newcommand{\loczue}{\hc{z}_{\text{UE}}}
\newcommand{\loctop}{\bar{\loc}}
\newcommand{\locdesuavo}{\hc{\loc}_1^*}
\newcommand{\locdesuavotop}{\hc{\bar{\loc}_1^*}}

\newcommand{\operatormin}{\hc{\text{min}}}

\newcommand{\setlosbs}{{\hc{\mathcal{F}}}_{\text{BS}}}
\newcommand{\setloslosbs}{\tilde{\hc{\mathcal{F}}}_{\text{BS}}}
\newcommand{\setdeslosbs}{{\hc{\mathcal{D}}}_{\text{BS}}}
\newcommand{\setdesloslosbs}{\tilde{\hc{\mathcal{D}}}_{\text{BS}}}
\newcommand{\setconfptstenta}{{\hc{\mathcal{Q}}}^{\text{ten}}}

\newcommand{\bandwidth}{\hc{B}}

\newcommand{\confpt}{\hc{\bm Q}}
\newcommand{\confptrnd}{\hc{\tbm Q}}

\newcommand{\exconfpt}{\hc{\acute{\bm Q}}}
\newcommand{\exloc}{\hc{\acute{\bm q}}}
\newcommand{\exnumwp}{\hc{\acute{\numwp}}}

\newcommand{\numconfpt}{\hc{ C}}
\newcommand{\numconfptperslot}{\hc{ \tilde C}}
\newcommand{\confptoptimal}{\hc{\bm Q}^*}
\newcommand{\confpttenta}{\tilde{\hc{\bm Q}}}
\newcommand{\confptsuavone}{\hc{\bm a}}
\newcommand{\confptsuavtwo}{\hc{\bm b}}

\newcommand{\maxnumneighbors}{\hc{N}_{\text{nb}}}

\newcommand{\maxuavspeed}{\hc{v}_{\text{max}}}

\newcommand{\sampint}{\hc{\tau}}

\newcommand{\costmatrixuav}{\hc{\tilde{\bm M}}_{\text{BS}}}
\newcommand{\costentryuav}{\hc{\tilde{m}}}
\newcommand{\costmatrixtenta}{\hc{\bm M}^{\text{ten}}}

\newcommand{\graph}{\hc{\mathcal{G}}}
\newcommand{\graphuavone}{\hc{\mathcal{G}_2}}
\newcommand{\graphuavtwo}{\hc{\mathcal{G}_1}}
\newcommand{\graphloslosbs}{\tilde{\hc{\mathcal{G}}}_{\text{BS}}}

\newcommand{\timetotal}{\hc{T}}
\newcommand{\timetotalue}{\timetotal_{\text{UE}}}
\newcommand{\indtimestep}{\hc{n}}
\newcommand{\numtimestepoptpresample}{\hc{N}_{\text{pre}}}
\newcommand{\numtimestepuavone}{\hc{T}_1}
\newcommand{\numtimestepuavtwo}{\hc{T}_2}
\newcommand{\numtimesteps}{\hc{N}_{\text{step}}}

\newcommand{\prnodeset}{\hc{\mathcal{N}}}
\newcommand{\predgeset}{\hc{\mathcal{E}}}
\newcommand{\nodeset}{\hc{\mathcal{N}}}
\newcommand{\nodesetpre}{\tilde{\nodeset}}
\newcommand{\nodesetut}{\hc{\mathcal{N}_2}} 

\newcommand{\enodeset}{\hc{\bar{\mathcal{N}}}}
\newcommand{\enodesetuo}{\enodeset_1}
\newcommand{\enodesetut}{\enodeset_2}
\newcommand{\edestsetut}{\hc{\bar{\mathcal{D}}_2}}
\newcommand{\nodesetuo}{\hc{\mathcal{N}_1}} 

\newcommand{\numptsuavone}{\hc{N}}
\newcommand{\numliftstart}{\hc{N}_\text{lift}^\text{start}}
\newcommand{\numliftend}{\hc{N}_\text{lift}^\text{end}}

\newcommand{\matrixallinfty}{\hc{\bm I}_{\infty}}

\newcommand{\spacingvec}{\hc{\bm\delta}}
\newcommand{\spacingscalar}{\hc{\delta}}
\newcommand{\spacingx}{\spacingscalar_\text{x}}
\newcommand{\spacingy}{\spacingscalar_\text{y}}
\newcommand{\spacingz}{\spacingscalar_\text{z}}

\newcommand{\indinteration}{\hc{k}}

\newcommand{\confspace}{\hc{\mathcal{Q}}}
\newcommand{\confspacefree}{\hc{\mathcal{Q}}^{\text{free}}}

\newcommand{\numlocstoreplan}{\numwp_\text{replan}}
\newcommand{\numknownuelocs}{\numwp_\text{known}}
\newcommand{\indknownuelocs}{\indwp_\text{known}}

\newcommand{\cylinder}{{\hc{\mathcal{C}}}} 
\newcommand{\cylinderradius}{\hc{{d}}_{\cylinder}} %
\newcommand{\cylinderradiusmin}{\hc{{d}_{\cylinder,\text{min}}}} %

\newcommand{\arealength}{\hc{L}}
\newcommand{\arealengthx}{\arealength_{x}}
\newcommand{\arealengthy}{\arealength_{y}} 
\newcommand{\arealengthz}{\arealength_{z}} 

\newcommand{\pathlossexp}{\hc{\beta}} 
\newcommand{\lequal}{\hc{\rateueoveruav}_{\text{eq}}} 

\newcommand{\pointA}{\hc{\pmb{a}}}
\newcommand{\pointB}{\hc{\pmb{b}}}
\newcommand{\pointC}{\hc{\pmb{c}}}
\newcommand{\pointD}{\hc{\pmb{d}}}
\newcommand{\pointE}{\hc{\pmb{e}}}
\newcommand{\pointF}{\hc{\pmb{f}}}
\newcommand{\pointG}{\hc{\pmb{g}}}
\newcommand{\pointH}{\hc{\pmb{h}}}
\newcommand{\pointM}{\hc{\pmb{m}}}
\newcommand{\pointN}{\hc{\pmb{n}}}
\newcommand{\pointX}{\hc{\pmb{x}}} 
\newcommand{\pointY}{\hc{\pmb{y}}(\timeInstant)}
\newcommand{\pointZ}{\hc{\pmb{z}}(\timeInstant)}
\newcommand{\timeInstant}{\hc{t}} 
\newcommand{\timemaxx}{\hc{t}_{\text{1}}}
\newcommand{\timemaxy}{\hc{t}_{\text{0}}}
\newcommand{\timeInstantAux}{\hc{\tau}} 
\newcommand{\locUavAux}{\pointX(\timeInstant)}
\newcommand{\buildingWidth}{\hc{W}}
\newcommand{\buildingHeight}{\hc{H}}
\newcommand{\absorption}{\hc{\alpha}}
\newcommand{\absorptionmax}{\hc{\bar{\alpha}}}
\newcommand{\horizonAngle}{\hc{\theta}(\timeInstant)}
\newcommand{\spaceToBuilding}{\hc{\epsilon}}
\newcommand{\distAuxOne}{\hc{d}_1(\timeInstant)}
\newcommand{\distAuxTwo}{\hc{d}_2(\timeInstant)}
\newcommand{\distAuxThree}{\hc{d}_3(\timeInstant)}
\newcommand{\gain}{\hc{\gamma}}
\newcommand{\gainpl}{\hc{\gamma}_\text{PL}}
\newcommand{\gainAux}{\hc{\bar{\gamma}}}
\newcommand{\gainAuxNat}{\hc{\gamma}}
\newcommand{\gainAuxNatY}{\hc{\gamma}^{(\text{y})}}
\newcommand{\gaindbabs}{\hc{\gamma}^\text{dB}_\text{abs}}
\newcommand{\slf}{\hc{s}}
\newcommand{\wavelength}{\hc{\lambda}}
\newcommand{\dataTransfer}{\hc{D}}
\newcommand{\dataTransferHor}{\hc{D}_{\text{UE}}^{(\text{x})}}
\newcommand{\dataTransferHorApprox}{\hc{\tilde{D}}_{\text{UE}}^{(\text{x})}}
\newcommand{\dataTransferHorAux}{\hat{\dataTransfer}^{(\text{x})}}
\newcommand{\dataTransferHorAuxAux}{\check{\dataTransfer}^{(\text{x})}}
\newcommand{\dataTransferVer}{\hc{D}_{\text{UE}}^{(\text{y})}}
\newcommand{\vecElemFirst}{\hc{\pmb{i}}_{1}}
\newcommand{\vecElemSecond}{\hc{\pmb{i}}_{2}}
\newcommand{\varAuxOne}{\hc{u}}
\newcommand{\varAuxTwo}{\hc{v}}
\newcommand{\intAuxOne}{\hc{\mathcal{I}}_1}
\newcommand{\intAuxTwo}{\hc{\mathcal{I}}_2}
\newcommand{\noisepower}{\hc{\sigma}^2}

\newcommand{\complexity}{\hc{\mathcal{O}}}
\newcommand{\complexityfun}{\hc{{f}}}

\newcommand{\snrmincc}{\hc{\text{SNR}}_\text{cc}^\text{min}}

\newcommand{\numues}{\hc{M}}
\newcommand{\setueinds}{\hc{\mathcal{M}}}
\newcommand{\indue}{\hc{m}}
\newcommand{\setgridptsmultiues}{\hc{\tilde{\mathcal{D}}}^{\text{UE}}}
\newcommand{\ptmid}{\hc{\pmb{p}}_{\text{mid}}}
\newcommand{\ptueabove}{\hc{\pmb{p}}_{\text{UE}}^{\text{top}}}
\newcommand{\ptbsabove}{\hc{\pmb{p}}_{\text{BS}}^{\text{top}}}

\newcommand{\meanconn}{\hc{\bar{T}}_{\text{c}}}
\newcommand{\probfail}{\hc{P}_{\text{f}}}

\newcommand{\trajectoryyaxis}{\hc{\mathcal{T}}_{\text{y}}}
\newcommand{\trajectoryttc}{\hc{\mathcal{T}}_{\text{ttc}}}
\newcommand{\trajectorydat}{\hc{\mathcal{T}}_{\text{dat}}}
\newcommand{\trajectoryxaxis}{\hc{\mathcal{T}}_{\text{x}}}

\newcommand{\absorptionx}{\absorption_{\text{x}}}
\newcommand{\absorptiony}{\absorption_{\text{y}}}

\newcommand{\absorptiondat}{\absorption_{\text{dat}}}
\newcommand{\absorptionttc}{\absorption_{\text{ttc}}}

\newcommand\blfootnote[1]{%
    \begingroup
    \renewcommand\thefootnote{}\footnote{#1}%
    \addtocounter{footnote}{-1}%
    \endgroup
}

\begin{keywords}
    Aerial relays, path planning, probabilistic roadmaps, aerial
    communications.    \blfootnote{    This work has been funded by the IKTPLUSS grant 311994 of the Research
        Council of Norway.}

\end{keywords}

\section{Introduction}
\label{sec:intro}
\begin{changes}

\begin{bullets}
    \blt[motivation]
    \begin{bullets}
        \blt[comms with UAVs]
        \begin{bullets}
            \blt[application]Autonomous uncrewed aerial vehicles (UAVs) received great attention in wireless communications due to their ability to extend the coverage of
            cellular  networks~\cite{viet2022introduction,zhao2019emergency}.            
            \blt[example] This need arises e.g. when terrestrial
            infrastructure is absent (as in remote areas), damaged by a natural or man-made disaster (as in military attacks), or not operational (as in the recent Spanish blackout~\cite{nopower2025}).
            \blt[goal]In these situations, UAVs can be used to serve cellular users. 
        \end{bullets}
    \end{bullets}

    \begin{figure}
        \centering
        \captionsetup{width=\linewidth}
        \includegraphics[width=.81\linewidth, trim={0cm 3cm 0cm 3cm},clip]{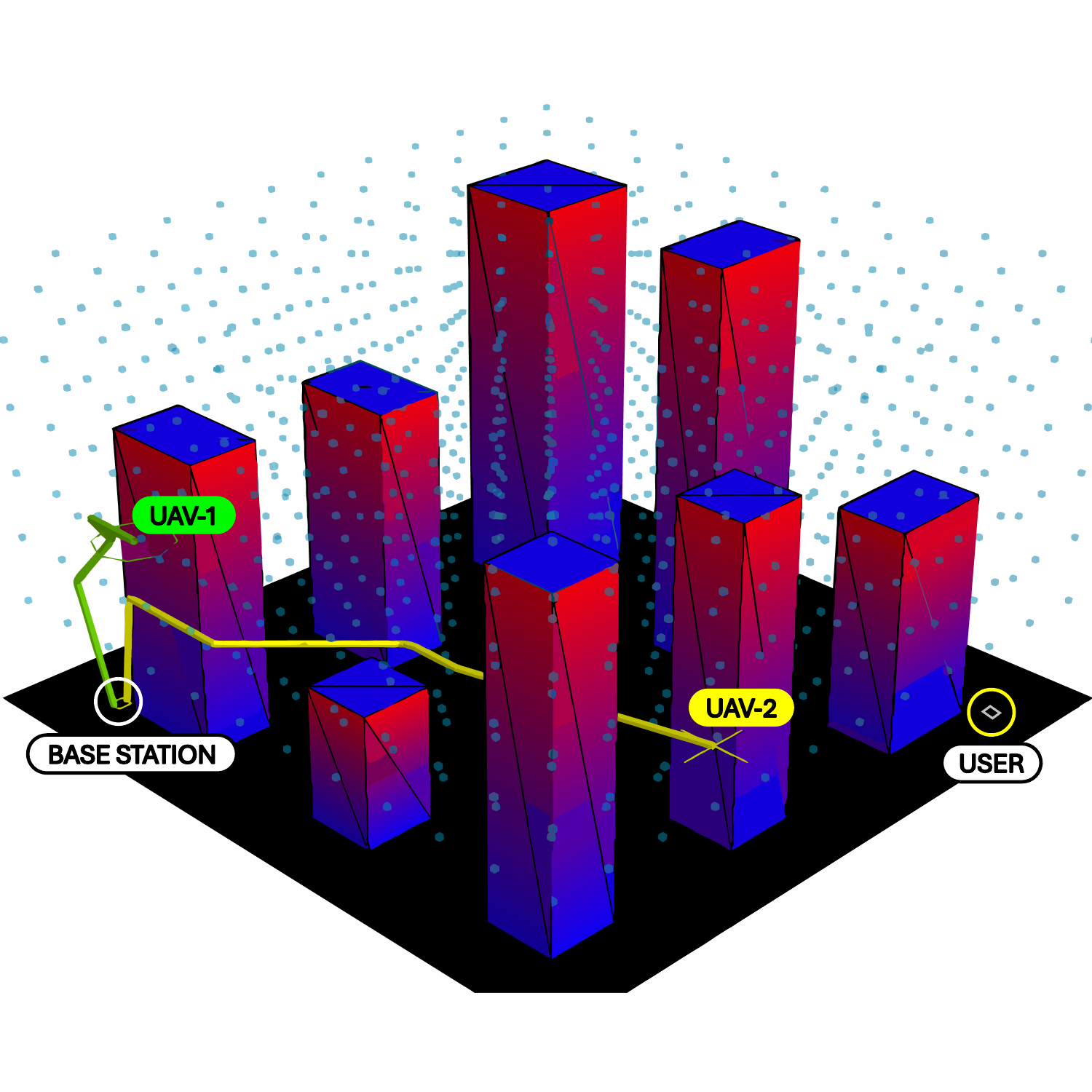}
        \captionof{figure}{Trajectories of two relay UAVs obtained with the proposed algorithm. Red/blue boxes represent buildings.  The flight grid points are represented as blue dots. The green and yellow lines denote the trajectories of the UAVs. 
        }
        \label{fig:environment}
    \end{figure}

    \blt[problems \ra time sensitive \ra relays]Several research problems emerge to address this need.
    \begin{bullets}%
        \blt[ABS placement]For example, in \emph{aerial base station (ABS) placement}, the goal is to find positions where the UAVs should hover to serve the users~\cite{viet2022introduction}. In this problem, the UAVs remain therefore static.
        \blt[Relay path planning]However, some applications involve time-sensitive requirements (e.g. an avalanche warning must be delivered to users in a mountain) or moving users (e.g. a police pursuit). For those scenarios, one needs to  plan the path of the UAVs to fly to suitable positions where they can serve the users. This is known as \emph{relay path planning}.%
    \end{bullets}%

    \blt[literature]
    \begin{bullets}%
        \blt[papers]This problem has spurred  extensive recent research interest; see e.g. \cite{wang2018power,zeng2016relaying,jiang2019trajectory,sun2021trajectory,chen2020trajectory,zhang2017trajectory,zhang2022cooperative,zhang2018multi,liu2021relaying,ghazzai2018dual,lee2022trajectory,yanmaz2022positioning,yanmaz2023joint,yanmaz2024dynamic}.
        \blt[limitations]However, as detailed in Sec.~\ref{sec:related_work},
        \begin{bullets}%
            \blt[all]all existing schemes suffer from two main limitations:
            \begin{itemize}%
                \item[(L1)]\cmt{flight constraints} None of them can accommodate arbitrary flight constraints. Although they can often deal with convex constraints, flight regions in practice are non-convex due to the presence of obstacles (e.g.  buildings and mountains) or  no-fly zones (e.g. airports and military bases).
                Height limits in the EU and US are also specified with respect to the ground level, which results in non-convex regions%
                \begin{extendedonly}%
                ~whenever the terrain is not flat    
                \end{extendedonly}.

                \item[(L2)]\cmt{channel models} No scheme can accommodate arbitrary channel
                models. In fact, the vast majority of them assume free-space propagation, which does not generally hold.
            \end{itemize}

            Furthermore, all existing schemes suffer from at least two of the following limitations:
            \begin{bullets}
                \blt[2D] (L3) the paths of the relays are 2D, i.e. confined to a horizontal plane,
                \blt[static users] (L4) they require that the users remain static,
                \blt[one relay] (L5) they can only handle one relay,
                \blt[no theoretical guarantees] (L6) their optimality cannot be guaranteed, and
                \blt[limited objectives] (L7) they cannot accommodate a general  family of objective functions.                 
            \end{bullets}
            See Table~\ref{tab:literature} and Sec.~\ref{sec:related_work} for details.
            \begin{table*}[!t]
                \begin{changes}                    
                \begin{center}
                \caption{Comparison of the proposed approach with existing works. * means that the sequence of iterates converges to a non-necessarily optimal solution.}
                \label{tab:literature}
                \begin{tabular}{ |p{1.2cm}|p{8cm}|>{\centering}p{1.3cm}|>{\centering}p{.3cm}|>{\centering}p{1cm}|>{\centering}p{.8cm}|>{\centering}p{.9cm}|>{\centering\arraybackslash}p{1.3cm}| }
                    \hline
                     & \vspace{.4em}{Objective}                                                       & Arbitrary Flight Constraints & \vspace{.4em}{3D} & Arbitrary Channel Model & 
                     \vspace{.0em} Moving User & \vspace{.0em}Multiple Relays & \vspace{.0em}\arraybackslash Theoretical Guarantees \\
                    \hline
                    \cite{wang2018power, chen2020trajectory}              & Maximize the minimum (across users) average (over time slots) rate.               & &                                                               &                                                        &                                             &                                                 &   * \\
                    \hline
                    \cite{zeng2016relaying}           & Maximize the summation of the user's throughput in all time steps. &                    &                                                               &                                                        & \centering{$\checkmark$}               &                                                 & {*}\\
                    \hline
                    {\cite{jiang2019trajectory}}        & Maximize the minimum (across terminal pairs) average (over time slots) rate.  &     &                                                               &                                                        &                                             &                                                 & {*}                                      \\
                    \hline
                    {\cite{sun2021trajectory}}          & Minimize the energy to relay given amounts of data in all pairs of users.    &          &                                                               &                                                        &                                             &                                                 & {*}                                      \\[.3cm]
                    \hline
                    {\cite{zhang2017trajectory}}        & Minimize the sum (over all time slots) of the probability of outage.   &          &                                                               &                                                        &                                             &                                                 & {*}                                                       \\
                    \hline
                    {\cite{zhang2022cooperative}}       & Maximize the average (over all time slots) rate.     &                                  &                                                               &                                                        &                                             & {$\checkmark$}                   & {*}                                      \\
                    \hline
                    {\cite{zhang2018multi}}             & Maximize the total amount of data received by the user.       &                         &                                                               &                                                        &                                             & \centering{$\checkmark$}                   & {*}                                      \\
                    \hline
                    {\cite{liu2021relaying}}            & Maximize the minimum (across terminal pairs) average (over time slots) rate.  &     &   \centering{$\checkmark$}                                                            &                                                        &                                             & {$\checkmark$}                   & {*}                                      \\
                    \hline
                    {\cite{ghazzai2018dual}}            & Minimize the (weighted) cumulative time needed to relay data of all terminal pairs. & &                                                               &                                                        &                                             & {$\checkmark$}                   & {*}                                      \\[.3cm]
                    \hline
                    {\cite{lee2022trajectory}}          & Minimize the flight time to relay data to all clusters of users.                &       &                                                               &                                                        &                                             & {$\checkmark$}                   & {*}                                      \\
                    \hline
                    {\cite{yanmaz2022positioning, yanmaz2023joint}}      & Minimize the number of relay UAVs.      &                                               &                                                               &                                                        & {$\checkmark$}               & {$\checkmark$}                   &                                                         \\
                    %
                    \hline
                    {\cite{yanmaz2024dynamic}}          & Minimize the summation of flight distances. &                                           &                                                               &                                                        & {$\checkmark$}               & {$\checkmark$}                   &                                                         \\
                    \hline
                    {\centering{\textbf{Proposed}}} & Arbitrary additive  objective. &                 {\centering{$\checkmark$}}                                        & {\centering{$\checkmark$}}                     & {\centering{$\checkmark$}}              & {\centering{$\checkmark$}}   & {\centering{$\checkmark$}}       & Optimality guarantees.  \\
                    \hline
                \end{tabular}
                \end{center}
                \end{changes}
            \end{table*}
        \end{bullets}


    \end{bullets}%


    \blt[contributions]The main contribution\footnote{\label{footnote:relconf} \arevtwo{Parts of this work have been presented at the IEEE Global Communications Conference 2023~\cite{viet2023roadmaps}. Relative to that conference version, the present paper includes a generalization of the considered channel model to accommodate an arbitrary path loss exponent, an algorithm for serving moving users, additional objective functions, Theorem 2, an extension for more than 2 relays, an extension for more than 1 user, a complexity analysis, a large number of numerical experiments with additional benchmarks, and {the analysis of the influence of the objective function on the UAV trajectory.}}} of this paper
    \begin{bullets}
        \blt[framework]is a \emph{general framework for relay path planning}  that addresses all the above limitations. To understand the main idea,
        \begin{bullets}%
            \blt[existing]%
            \begin{bullets}%
                \blt[aerial relay literature \ra no discretization]note that the reason why the aerial relay schemes in the \emph{communications literature}  cannot accommodate arbitrary flight constraints and channel models is that they do not discretize space. Treating the spatial coordinates  as continuous variables facilitates planning the paths of multiple relays, but is only applicable in free space.
                \blt[UAV literature \ra discretization]On the other hand, the paths of UAVs in the \emph{robotics literature} are typically planned in a discretized space; see e.g. \cite{hong2021path}. This enables obstacle avoidance but hinders the usage of more than one relay due to the exponential growth of the number of possible states with respect to the number of UAVs and waypoints. Besides, these schemes cannot be applied to the relay path planning problem, which involves communication metrics and constraints, such as the need to maintain connectivity among the relays and with a terrestrial base station (BS) throughout the trajectories. %
            \end{bullets}%

            \blt[proposed]
            \begin{bullets}%
                \blt[general]To combine the merits  of both bodies of literature, this paper proposes \emph{the first  framework for relay path planning that is  based on spatial discretization}.  To cope with the exponential growth of the state space while accommodating communication constraints,
                the idea here is to adapt the \emph{probabilistic roadmap} (PR) algorithm~\cite{kavraki1996roadmaps},   which finds  shortest-paths on graphs of randomly generated states.
                \blt[extension]The adaptation is based on  a custom probability distribution for  generating these states, which in turn relies on a heuristic   that produces 
                feasible waypoint sequences. While the complexity of plain PR would grow exponentially with the number of UAVs, the complexity of the proposed algorithm grows just linearly. 

            \end{bullets}

            \blt[strengths]The resulting framework, referred to as \emph{PR with feasible initialization} (PRFI), features several strengths.
            \begin{bullets}%
                \blt[objectives] First, (S1) PRFI can optimize arbitrary  objectives so long as they are additive over time.
                \blt[arbitrary channel] (S2) These objectives, which will typically capture communication performance metrics, can involve arbitrary channel models, not necessarily the free-space model.
                \blt[flight constraints]The adopted objective is optimized subject to (S3) arbitrary constraints on the flight region and (S4) the constraint that  the relays remain connected throughout their trajectories.
                \blt[multiple relays] In addition, (S5) multiple relays and
                \blt[moving users](S6) moving users can be accommodated.
                \blt[theoretical guarantees] Finally, (S7) the optimality of PRFI can be established theoretically  in certain  cases.%
            \end{bullets}%
        \end{bullets}%

        \blt[algorithms]The proposed PRFI framework is applied to three specific objectives: (i) the time it takes to establish connectivity with a  user, (ii) the outage time, and  (iii) the amount of transferred data. The case of a static user is treated separately since it allows greater optimality by avoiding uniform time discretization. An example in this scenario  is shown in Fig.~\ref{fig:environment}. The heuristic waypoint sequences used by PRFI are shown to be optimal in certain scenarios. 
        Videos of trajectories can be found along with the developed simulator and  simulation code   
        at \url{https://github.com/uiano/pr_for_relay_path_planning}.
    \end{bullets}%


    %

    \begin{changes}
        \blt[paper structure]
        Sec.~\ref{sec:related_work} summarizes the related work.        
        Afterwards,
        Sec.~\ref{sec:problem}  introduces the model and formulates the problem. The PR approach is then reviewed in
        Sec.~\ref{sec:introroadmap}.
        Specific PRFI algorithms are derived   in Secs.~\ref{sec:static} and~\ref{sec:moving} and extensions are presented in Sec.~\ref{sec:extensions}. 
        Finally, numerical experiments and conclusions are respectively presented in Secs.~\ref{sec:experiments} and~\ref{sec:conclusions}.
         Complete proofs of the theorems and further details can be found in \cite{viet2026planningarxiv}.
    \end{changes}%

    \blt[notation]\emph{Notation: }%
        \begin{bullets}%
            \blt[set] Sets are notated by uppercase calligraphic letters.
            \blt[cardinality] $|\mathcal{S}|$ is the cardinality of set $\mathcal{S}$.
            \blt[cartesian product] $\mathcal{A}\times\mathcal{B}\define\{(a,b):a\in\mathcal{A}, b\in\mathcal{B}\}$ is the Cartesian product of sets $\mathcal{A}$ and $\mathcal{B}$, where $(a,b)$ denotes a tuple.
            %
            %
            \blt[r field positive] $\rfield_+$ is the set of non-negative real numbers.
            \blt[matrix vector] Boldface uppercase (lowercase) letters denote matrices (column vectors).
            \blt[Euclidean distance] $\|\loc\|$ stands for the $\ell_2$-norm of vector $\loc$.
            \blt[indicator]$\funindicator[.]$ is a function that returns 1 if the condition inside is true and 0 otherwise.
            \blt[min operator] $\min(a,b)$ and $\max(a,b)$ respectively denote the minimum and maximum between $a$ and $b$.
            \blt[1st derivatives] $\dot\loc(t)$ stands for the entrywise first derivative of $\loc(t)$ with respect to $t$.
            \blt[floor] $\lfloor a \rfloor$ is the largest integer that is less than or equal to $a$.
        \end{bullets}%

\end{bullets}%

\end{changes}

\section{Related Work}
\label{sec:related_work}
\begin{changesv2}
    \begin{bullets}%
        \blt[intro]The usage of UAVs in communications has attracted extensive research efforts. The problems considered in the literature can be classified into three main categories:
        \begin{bullets}%
            \blt ABS placement,
            \blt path planning for data dissemination, and
            \blt relay path planning.
        \end{bullets}
        %
        \blt[aerial base station placement]In \emph{ABS placement}~\cite{viet2022introduction}, UAVs with onboard base stations hover at static locations to serve a typically large number of users; see~\cite{romero2024placement,chen2021relay} and the references therein. The problem here is to determine the 3D locations of the ABSs.


        \blt[path planning for data dissemination]In \emph{path planning for data dissemination}, the UAVs receive data at a certain location, fly to another location, and then transmit the data; see e.g.~\cite{asano2021delay} and the references therein.
        \blt[relay path planning] In \emph{relay path planning}, the UAVs establish  links between terminals, possibly in multiple hops.  The problem is to plan their paths.
        \begin{bullets}%
            \blt[number of relays]Many works focus on using a single UAV~\cite{wang2018power,zeng2016relaying,jiang2019trajectory,sun2021trajectory,chen2020trajectory,zhang2017trajectory} whereas others can accommodate multiple UAVs~\cite{zhang2022cooperative,zhang2018multi,liu2021relaying,ghazzai2018dual,lee2022trajectory,yanmaz2022positioning,yanmaz2023joint,yanmaz2024dynamic}.
            \blt[channel]Among these works, most consider free-space propagation~\cite{wang2018power,zeng2016relaying,jiang2019trajectory,sun2021trajectory,chen2020trajectory,zhang2022cooperative,zhang2018multi,liu2021relaying,yanmaz2022positioning,yanmaz2023joint,yanmaz2024dynamic}, but fading with arbitrary path loss exponents~\cite{zhang2017trajectory} and empirical air-to-ground channel models~\cite{ghazzai2018dual,lee2022trajectory} have also been considered. 
            \blt[approaches]Existing schemes adopt one of the following three approaches:
            \begin{bullets}%
                \blt[list of approaches]%
                \begin{bullets}%
                    \blt[non-linear opt](i) non-linear optimization over continuous variables that represent the spatial coordinates of all UAVs~\cite{wang2018power,zeng2016relaying,jiang2019trajectory,sun2021trajectory,chen2020trajectory,zhang2022cooperative,zhang2018multi,liu2021relaying,zhang2017trajectory};
                    \blt[mixed integer LP](ii) mixed integer linear programming (MILP) to determine the order in which a set of locations are visited following straight lines~\cite{ghazzai2018dual,lee2022trajectory};
                    \blt[steiner tree]and (iii) Steiner tree problem heuristics, which approximately minimize the number of required relays per time slot~\cite{yanmaz2022positioning,yanmaz2023joint,yanmaz2024dynamic}.
                \end{bullets}%
                \blt[limitations]Table~\ref{tab:literature} summarizes the limitations of these schemes.

            \end{bullets}%

        \end{bullets}%

        \blt[other works with uavs + PR] It is worth noting that there have
        been  works where PR has been applied to UAV path planning (see
        e.g. \cite{arista2021indoor} and references therein) but, to the best of our knowledge, never for communications.
        \blt[\ra single uav] Furthermore, algorithms with spatial discretization have been considered for   UAV communications (e.g. to plan a path through
        coverage areas~\cite{yang2019maps}) but never for relay path planning.%
    \end{bullets}%
\end{changesv2}%

\section{The Path Planning Problem}
\label{sec:problem}




\cmt{model}
\begin{bullets}%
    \blt[``stage'']Consider a spatial region $\region\subset\rfield^3$ and let $\oobregion\subset\region$ denote the set of points of $\region$ above the ground and outside any building or obstacle. For simplicity, it is assumed that
    \begin{changesv2}
        $[\locx,\locy,\locz]\transpose\in \oobregion$
        whenever $[\locx,\locy,\locz']\transpose\in \oobregion$ for some $\locz'<\locz$, which essentially means that the buildings or obstacles contain no holes or parts that stand out.
    \end{changesv2}
    %

    \blt[``characters'']
    \begin{bullets}%
        \blt[UAVs]A total of $\numuav$ aerial relays are deployed to establish a link between
        \blt[BS]a base station (BS) at location $\locbs\define[\locxbs,\locybs,\loczbs]\transpose\in\region$ and
        \blt[UE]a collection of $\numues$ users (UEs).
        \begin{changesv2}
            To simplify the exposition, it will be initially assumed that $\numues=1$; the case $\numues>1$ is addressed  in Sec.~\ref{sec:multi-ues}. The UE trajectory is denoted by the function $\locue(\cdot)$, defined by $t \mapsto \locue(t)\define [\locxue(t),\locyue(t),\loczue(t)]\transpose \in \region$, $t\geq 0$. Note that $\locue(\cdot)$ refers to a function whereas $\locue(t)$ is the vector that results from evaluating function $\locue(\cdot)$ at $t$.
        \end{changesv2}
    \end{bullets}
\end{bullets}%

\begin{bullets}%
    \blt[feasible region \ra min/max height, buildings...]
    Let $\flyregion\subset\oobregion$ be the set of spatial locations where the UAVs can fly. This is typically  determined by regulations (e.g. the minimum and maximum allowed altitudes,
    no-fly zones, and so on) and  other operational constraints.
    \blt[UAV position]The position of the $\induav$-th UAV at
    time $t$ is represented as $\loc_\induav(t)\in\flyregion$
    \blt[conf points] and the positions of all UAVs at time
    $t$ are collected into the $3 \times \numuav$ matrix
    $\confpt(t) \define [\loc_1(t),\ldots, \loc_\numuav(t)]$,
    referred to as the \emph{configuration
        point} (CP) at time $t$~\cite{kavraki1996roadmaps}. The set of all matrices
    whose columns are in $\flyregion$ is the
    so-called \emph{configuration space} (Q-space) and will
    be denoted as $\confspace$.
    \blt[take-off location]\begin{changesv2}The UAVs collectively follow a
        trajectory $\trajectory$, which is a function
        of the form
        $t \mapsto \confpt(t)\in\confspace$, $t\geq 0$.
    \end{changesv2}
    The  take-off locations of the UAVs are collected in matrix
    $\confpt_0 \define \confpt(0)$ and the maximum speed is $\maxuavspeed$.
\end{bullets}%

\subsection{Communication Model}
\label{sec:communication_model}
\begin{bullets}    %
    \blt[communication]
    \begin{bullets}%
        \blt[downlink] The targeted link must convey information
        in both ways but, to simplify the
        exposition, the focus here will be on the downlink.
        \blt[relay model]There, the signal transmitted by the BS is first
        decoded and retransmitted by UAV-1. The signal transmitted by UAV-1
        is decoded and retransmitted by UAV-2 and so on, until the UE
        receives the signal retransmitted by UAV-$\numuav$.
        \begin{changesv2}

            \blt[assumption]  For the considered schemes,  this communication can be
            implemented in many ways. However, the following assumption must be
            (at least approximately) satisfied:

            \emph{(as)~~the capacity of the channel between  two terminals does not depend on the locations of the other terminals.}

            Formally, if $\loc_{\induav}$ denotes the location of UAV-$\induav$ at the time of transmission, (as) requires that the capacity $\capacity(\loc_{\induav-1},\loc_\induav)$ between UAV-$(\induav-1)$ and UAV-$\induav$ does not depend on $\loc_{\induav'}$ for $\induav'\neq\induav, \induav-1$. Equivalently, the interference that UAV-$\induav$ receives from UAV-$\induav'$ should be much smaller than the signal it receives from UAV-$(\induav-1)$.
            \blt[interference from other transmitters]Note, nonetheless, that interference from any other transmitter in the environment can be readily included in  $\capacity(\loc_{\induav-1},\loc_\induav)$ without violating (as).
            
            \blt[approaches] The degree to which (as) holds depends on the allocation of communication resources (e.g. space, \arevthree{time}, frequency, or code) to the UAVs \footnote{\label{fnote:interference}\arevthree{Inter-hop interference can also be mitigated via directional antennas or beamforming, power control, and interference cancellation at the receivers.}}.
            \begin{bullets}%
                \blt[all orthogonal]If, for example, $\numres=\numuav+1$ orthogonal resources are available, each UAV and the BS can use  a different one and, as a result, (as) holds exactly. This case is not unrealistic in post-disaster scenarios where the terrestrial infrastructure is not operational and, therefore, available bandwidth abounds.
                \begin{changesv3}%
                    Even if $\numres<\numuav+1$, (as) can approximately hold if only nearby UAVs use orthogonal resources.
                \end{changesv3}%
                \blt[$\numres$ resources]More generally, when $\numres$ orthogonal resources are available, then UAV-$\induav$ uses the same resource as UAV-$(\induav+i \numres)$, $i=\pm1,\pm2,\ldots$. Thus, UAV-$\induav$ receives interference from UAV-$(\induav+i \numres-1)$, $i=\pm 1,\pm 2,\ldots$. These UAVs will generally be far away from UAV-$\induav$ for moderate values of $\numres$ and, therefore, the assumption will approximately hold.
            \end{bullets}
        \end{changesv2}


        \blt[rates]
        \begin{bullets}%
            \blt[min uav rate]Besides the data to be relayed
            towards the UE, each UAV consumes a rate
            $\minuavrate$ for command and control.
            \blt[rate UAV-i]This means that the useful rate between
            the BS and  UAV-$\induav$ for a generic CP
            $\confpt \define [\loc_1,\ldots, \loc_\numuav]$ can be recursively obtained~as
            \begin{equation}
                \label{eq:recrate}
                \rate_{\induav}(\confpt) =
                \max(0,\operatormin\left(\rate_{\induav-1}(\confpt)-\minuavrate,\capacity(\loc_{\induav-1},
                        \loc_{\induav})\right)).
            \end{equation}
            %
            \blt[UE]Similarly, the achievable rate of the UE when it is at $\locue$ is
            \begin{equation}%
                \uerate(\confpt,\locue) = \max(0,\operatormin\left(\rate_{\numuav}(\confpt) -\minuavrate,\capacity(\loc_{\numuav}, \locue)\right)).
            \end{equation}%
            The second argument in $\uerate$ will be omitted when it is clear from the context.
            \blt[connectivity requirement]
            \begin{bullets}%
                \blt[UAVs] Throughout the trajectory, the UAVs must have connectivity with the BS, meaning that $\rate_{\induav}(\confpt(t))\geq \minuavrate~\forall \induav,t$.%
            \end{bullets}%
        \end{bullets}%
    \end{bullets}%

    \subsection{Capacity Maps}
    \label{sec:capmap}
    \begin{changesv2}%
        \begin{bullets}%
            \blt[overview] A
            \emph{capacity map} is a function $\capacity$ that maps a pair of locations $\loc$ and $\loc'$ to
            the capacity  $\capacity(\loc,\loc')$ of the channel between them.
            %
            \blt[capacity(gain)]This capacity can be expressed in terms of the channel gain $\gain(\loc,\loc')$ between $\loc$ and $\loc'$ as
            \begin{align}
                \capacity(\loc,\loc') = \capacitysnr(\gain(\loc,\loc')) \define \bandwidth \log_2(1+\gain(\loc,\loc')/\noisepower),
            \end{align}
            where
            $\bandwidth$ is the bandwidth and $\noisepower$ captures both interference and noise power.

            \blt[channel-gain radio maps]There are many approaches to obtain function $\gain$.
            \begin{bullets}%
                \blt[ray-tracing]One can, for example, rely on 3D terrain/city models together with ray-tracing algorithms or other simulation software.
                \blt[tomographic channel-gain model]If such models are not available, one may construct a channel-gain radio map using measurements~\cite{romero2022cartography}.
                \begin{bullets}%
                    \blt[description]To this end, the most common approach builds upon the so-called \emph{tomographic model} \cite{patwari2008nesh,romero2018blind}, which prescribes that
                    \begin{align}                                        \gain(\loc,\loc')=
                        \gainpl(\dist(\loc,\loc'))\cdot
                        10^{\gaindbabs(\loc,\loc')/10},
                    \end{align}
                    where $\dist(\loc,\loc')\define\|\loc-\loc'\|$,  $\gainpl(\dist)$ is the \emph{path loss} of a link with distance $\dist$, and $\gaindbabs(\loc,\loc')$ is the gain due to absorption. Specifically,
                    \begin{bullets}
                        \blt[PL]\begin{equation}
                            \gainpl\left(\dist\right) \define \txpower\txgain\rxgain\left(\frac{\wavelength}{4\pi\dist}\right)^{\pathlossexp},
                        \end{equation}
                        where $\txpower$, $\txgain$, $\rxgain$, $\wavelength$, and $\pathlossexp$ are respectively the transmit power, transmit gain, receive gain, wavelength, and pathloss exponent.
                        \blt[abs]On the other hand,        $\gaindbabs(\loc,\loc')$ is given by the line integral~\cite{patwari2008nesh}
                        \begin{align}
                            \gaindbabs(\loc,\loc') \define
                            -\frac{1}{\sqrt{\dist(\loc,\loc')}}\int_{\loc}^{\loc'} \slf(\locaux) d\locaux,
                        \end{align}
                        where $\slf$ is the so-called \emph{spatial loss field} (SLF), which quantifies absorption at each spatial location. Clearly, $\slf(\locaux)$ is 0 if there is no obstacle at $\locaux$.

                    \end{bullets}
                \end{bullets}
                \blt[tmia]
                \begin{bullets}
                    \blt[def]The proposed algorithm can accommodate  \emph{arbitrary capacity maps}, not necessarily based on ray-tracing or the tomographic model. However,
                    some theoretical results will rely on a limit case of tomographic maps referred to as the \emph{tomographic map with infinite absorption} (TMIA), where $\slf(\locaux)\in\{0,\infty\}~\forall \locaux$.
                    Clearly, in this case,
                    \begin{equation}
                        \capacity(\loc,\loc') = \begin{cases}
                            \capacitydist(\dist(\loc,\loc')), & \text{if there is LOS between $\loc$ and $\loc'$} \\
                            0                                 & \text{otherwise},
                        \end{cases}
                        \label{eq:tmia}
                    \end{equation}
                    where $\capacitydist(\cdot)\define \capacitysnr(\gainpl(\cdot))$.
                    \blt[intuition]This corresponds to an environment where the obstacles (e.g. buildings) are opaque to  radio waves, an accurate assumption in high frequencies; e.g. in mmWave bands~\cite{romero2024placement}.
                    %
                    %
                    \blt[relation to Chen] A map of this kind has been used in the related literature; see e.g. \cite{chen2017map}.
                    \blt[convenient theoretically]It is convenient for theoretical derivations since it essentially constitutes a worst-case map.%
                \end{bullets}%
            \end{bullets}%
            \begin{changesv3}
            \begin{myremark}\label{rem:fading}
                The actual channel between two spatial locations can be worse than that predicted by the adopted capacity map. It may be thus convenient to include a safety margin in the computations to ensure connectivity. 

            \end{myremark}
            \end{changesv3}
        \end{bullets}%
    \end{changesv2}%
\end{bullets}%

\subsection{Problem Formulation}
\label{sec:problemformulation}
\cmt{problem formulation}
\begin{bullets}%
    \blt[Overview]\arev{This paper addresses the problem of designing the trajectory of the UAVs so that they can  serve the UE(s)}.
    \blt[Given] Given
    \begin{bullets}%
        \blt[BS loc]$\locbs$,
        \blt[initial UAV loc]$\confpt_0\in\confspace$,
        \blt[UE loc]$\locue(\cdot)$,
        \blt[num uavs]$\numuav$,
        \blt[capacity function ] $\capacity$, 
        \blt[min uav rate]$\minuavrate$, and
        \blt[max uav speed] $\maxuavspeed$,
    \end{bullets}%
    \blt[Requested]
    \begin{bullets}%
        \blt[trajectory that minimizes time to connect]the problem is to solve
        \begin{salign}[eq:trajproblem]
            \minimize_{\trajectory}~~&\funobj(\trajectory)\\
            \st~~&\confpt(t)\in \confspace~\forall t,~~\confpt(0)=\confpt_0\\
            &\rate_{\induav}(\confpt(t))\geq \minuavrate~\forall \induav,t\\
            &\| \dot \loc_\induav(t)\|\leq \maxuavspeed~\forall \induav,t,\label{eq:maxspeedconstr}
        \end{salign}
        where   the objective function $\funobj$
        \arevtwo{ is discussed next. Observe that the optimization variable $\trajectory$ is a  function of the continuous-time variable $t$.} %
        \blt[separation]Note also that \eqref{eq:trajproblem} does not
        enforce a minimum distance between UAVs. Such a
        constraint is omitted for simplicity but can be  accommodated in the proposed scheme.

        \arevtwo{The following objective functions will be considered:}
        \begin{itemize}%
            \item\textbf{Connection time.}
            \begin{bullets}%
                \blt[motivation]In many situations, it is desirable to establish connectivity between the UE and the BS as soon as possible. This is the case  when  time-sensitive information must be delivered in a short time, e.g. to  notify a user of a tsunami, earthquake, or military attack.
                \blt[objective] The goal is, therefore, to minimize the \emph{connection time}
                \begin{equation}
                    \funobj(\trajectory)=\ttc(\trajectory)\define\inf \{t~:~\uerate(\confpt(t),\locue(t))\geq
                    \minuerate\},
                    \label{eq:timeconnectobj}
                \end{equation}
                where  $\minuerate$ is the target rate.
                Note that, consistent with the standard convention for the
                infimum, $\ttc(\trajectory)=\infty$ if $\uerate(\confpt(t),\locue(t))<
                    \minuerate$  for all $t$.
                \blt[movement]
                \begin{changesv2}
                    Note also that  $\funobj(\trajectory)$ is not meaningful if the UE loses connectivity after the connection is established, which can happen if the UE moves. This renders this objective immaterial unless the UE is static.
                \end{changesv2}
            \end{bullets}%
            \item\textbf{Outage time.}
            \begin{bullets}%
                \blt[motivation] A natural objective when the UE is not static  is the time during which it has no connectivity~\cite{khuwaja2020effect}.
                \blt[objective]This motivates minimizing the \emph{outage time}
                \begin{align}
                    \funobj(\trajectory) = \int_0^{\horizon}
                    \funindicator[\uerate(\confpt(\timeInstant),\locue(\timeInstant))<\minuerate] d\timeInstant
                    \label{eq:outageobj}
                    ,
                \end{align}%
                where $\horizon$ is the time horizon \arev{and $\funindicator[.]$ was defined in Sec.~\ref{sec:intro}}.
            \end{bullets}%
            \item\textbf{Transferred data.}
            \begin{bullets}%
                \blt[motivation] \arev{In some applications, data may be relatively delay
                    tolerant. Thus, instead of minimizing outage time, one may be
                    interested in maximizing the total amount of data received by
                    the UE within a given time horizon  $\horizon$. This gives rise
                    to the objective function}
                \blt[objective]
                \begin{align}
                    \funobj(\trajectory) = -\int_0^{\horizon}
                    \uerate(\confpt(t),\locue(t)) dt
                    \label{eq:cumrateobj}
                    ,
                \end{align} %
                where  the minus sign is due to the fact that \eqref{eq:trajproblem} is a minimization problem.
            \end{bullets}%
        \end{itemize}%
        \blt[dependence on objective]%
        \begin{changesv2}%
            Clearly, the choice of  objective determines the optimal trajectory. An analysis of this influence is presented in Appendix~\ref{sec:influenceobj}.%
        \end{changesv2}%
        %
        %

    \end{bullets}%
\end{bullets}%

\section{Path Planning via Probabilistic Roadmaps}
\label{sec:introroadmap}
\cmt{overview}
\begin{bullets}%
    \blt[difficulty]Since Problem \eqref{eq:trajproblem} involves 
    optimization with respect to a trajectory, which comprises infinitely many CPs, the exact
    solution cannot generally be found by numerical means.
\end{bullets}%
\cmt{simplifications}%
\begin{bullets}%
    %
    \begin{changesv2}\blt[discretization]For this reason, both space and time will be discretized.
    \end{changesv2}

    \begin{bullets}%
        \blt[ fly grid]Specifically, the flight region
        $\flyregion$ is discretized into a regular 3D \emph{flight grid}
        $\grid\subset\flyregion\subset \rfield^3$, whose points are separated
        along the x, y, and z axes respectively by $\spacingx$, $\spacingy$,
        and $\spacingz$; see Fig.~\ref{fig:environment}. To simplify some expressions, it will be assumed that the take-off locations of the UAVs are in $\grid$.
        \blt[q-space]This spatial discretization also induces a grid $\confspacegrid$
        in the Q-space, which e.g. for $\numuav=2$ is given by $\confspacegrid\define
            \grid \times \grid$.

        \blt[trajectory]Regarding the time-domain discretization, the
        trajectory $\trajectory$ will be designed by first finding a CP
        sequence $\trajectorywps\define
            \{\confpt[0],\ldots,\confpt[\numwp-1]\}$ through the grid $\confspacegrid$.
        \blt[combined path]This sequence will be referred to as \emph{combined path}, whereas the  waypoint sequence $\loc_\induav[\indwp]$ that each individual UAV must follow will be referred to as a \emph{path}.
        \blt[feasible]Given $\trajectorywps$, the trajectory $\trajectory$ is recovered by interpolating the waypoints in $\trajectorywps$. A path  $\pathuavs$ will be said to be \emph{feasible} iff
        the associated trajectory $\trajectory$ is feasible.
    \end{bullets}
\end{bullets}

\cmt{algo overview \ra  probabilistic roadmaps}Having introduced the
discretization, the next step is to discuss how to obtain a feasible
waypoint sequence that attains a satisfactory objective value.
\begin{bullets}%
    \blt[motivate probabilistic roadmaps \ra high dimensionality]
    \begin{bullets}%
        \blt[UAV path planning]Conventional algorithms for planning
        the path of a single (non-relay) UAV create a graph whose nodes are the
        points of $\grid$ and where an edge exists between two nodes
        if the associated points are adjacent on the grid. In a 3D
        regular grid like $\grid$, each point  has typically 26 adjacent
        points, which renders the application of shortest-path
        algorithms on such a graph viable.
        However, the grid $\confspacegrid$ has exponentially many
        more points than $\grid$. For example, if $\grid$ is a (small)
        $10 \times 10 \times 10$ grid, then $\confspacegrid$ has
        $10^{3\numuav}$ points. Besides, since each of them has generally $27^\numuav-1$
        adjacent points, solving~\eqref{eq:trajproblem} via
        shortest-path algorithms is prohibitive.
    \end{bullets}%


    %
    \blt[briefly describe probabilistic roadmaps]
    \begin{bullets}%
        \blt[overview]To bypass this kind of  difficulties in other contexts,
        the seminal paper~\cite{kavraki1996roadmaps} proposed the PR algorithm,
        which consists of 3 steps: Step 1:~a node set
        $\prnodeset\subset\confspace$ with a much smaller number of nodes than
        $\confspacegrid$ is randomly generated. Step 2: the edge set
        $\predgeset\subset \prnodeset\times \prnodeset$ is constructed by
        connecting the nodes corresponding to CPs $\confpt$ and $\confpt'$ if
        (i) they are  nearest neighbors and (ii) it is possible to transition
        directly from  $\confpt$ to $\confpt'$. Step 3: a shortest path  is
        found on the graph with node set $\prnodeset$ and edge set~$\predgeset$.

        \blt[limitations] Unfortunately, \arevtwo{even though the number of CPs in $\prnodeset$ is significantly smaller than in $\confspacegrid$, it will still be prohibitive for the application at hand. This is because
            plain vanilla PR  generates the CPs uniformly at random over the configuration space and, thus, the the number of CPs  necessary
            to find a feasible (let alone satisfactory) path grows exponentially with the dimensionality of the configuration space.}

    \end{bullets}%
    \blt[proposed sampling: feasible trajectory]%
    \begin{bullets}%
        \blt[motivation]The main idea of this paper is to  modify PR to
        counteract this limitation:
        %
        %
        \blt[def: feasible trajectory]\arevtwo{instead of generating the CPs according to a uniform distribution in Step 1, a  special probability distribution is developed to ensure that a feasible waypoint sequence can be found by drawing a nearly minimal number of CPs.
        }

    \end{bullets}%
\end{bullets}%

\section{Path Planning for a Static UE}
\label{sec:static}

\begin{bullets}%
    \blt[Overview]This section addresses the path planning problem for a single static UE. The case of a single moving UE is addressed in Sec.~\ref{sec:moving}. 
    \begin{changesv2}
    Multiple UEs will be considered in Sec.~\ref{sec:multi-ues}.    
    \end{changesv2}%
    
    \blt[static special case]%
    \begin{changesv2}%
        Although a static UE is a special case of a moving UE where $\locue(t)=\locue~\forall t$, it is useful to first address the static scenario because it simplifies the presentation and yields an algorithm that attains greater optimality.
    \end{changesv2}%
    \blt[objective f]\begin{changesv2}
        It is also convenient to focus on the connection time objective \eqref{eq:timeconnectobj}. Minimizing the outage time \eqref{eq:outageobj} is equivalent since the UE is static. In turn, maximizing the transferred data is deferred to  Sec.~\ref{sec:moving} because the approach there
        is more suitable to enforce the time horizon in \eqref{eq:cumrateobj} as it  involves uniform time discretization.
    \end{changesv2}%
\end{bullets}%

\subsection{Planning the Tentative Path}
\label{sec:staticUE-tentative}

\begin{bullets}%
    \blt[Overview]
    \begin{changesv2}As indicated in Sec.~\ref{sec:introroadmap}, the proposed framework adapts PR by mainly modifying the probability distribution for CP generation. As detailed next, this distribution relies on  a heuristic that produces a feasible path. 
        \end{changesv2}


    \blt[definitions]
    \begin{bullets}%
        \blt[valid path]A trajectory $\trajectory$ is said to be \emph{valid} if it is feasible and attains a finite connection time $\ttc(\trajectory)$. Equivalently, a combined path $\pathuavs \define\{
            \confpt[0],\ldots,$ $\confpt[\numwp-1]\}$ is valid if it  is feasible and $\exists \indwp~:~\uerate(\confpt[\indwp])\geq \minuerate$.
        \blt[optimal path] A feasible path $\pathuavs$ on a grid $\confspacegrid$ is said to be \emph{optimal} if it attains the lowest connection time among all feasible paths  on $\confspacegrid$.
    \end{bullets}%
    \blt[overview]The heuristic  proposed in this section will be seen to produce a \emph{valid} (and sometimes even \emph{optimal}) path under general conditions.

    \blt[$\numuav=2$]
    \arevtwo{
        Interestingly, $\numuav=2$ UAVs often suffice to guarantee the existence of a valid path. This is the case e.g. if $\maxbheight$ and $\dist$  are not
        too large relative to $\minuerate$ and $\minuavrate$, where  $\maxbheight$ is
            the height of the highest obstacle and  $\dist\define\sqrt{
                    (\locxue-\locxbs)^2+(\locyue-\locybs)^2 }$ is the horizontal
            distance between the BS and the UE:}
    \begin{bullets}
        \blt[prop]
        \begin{myproposition}
            \arevtwo{          
            \thlabel{prop:feasibleexists}Let $\capacity$ be a TMIA map. Suppose that $\loc_\induav(0)=\locbs~\forall \induav$ and that the UAVs can fly
            above $\maxbheight$.             
            If
            $\maxbheight < \min(\loczue +            \capacitydist\inv(\minuerate),\loczbs+\capacitydist\inv(\minuerate+2\minuavrate))$
            and $\dist\leq \capacitydist\inv(\minuerate+\minuavrate)$,  then there
            exists a valid path with $\numuav=2$.
            }
        \end{myproposition}
        \begin{proof}
            Let $\locz\define \min(\loczue +            \capacitydist\inv(\minuerate),\loczbs+\capacitydist\inv(\minuerate+2\minuavrate))$ and suppose that $\numuav=2$.
            It is easy to show that if UAV-1 flies to $\loc_1\define[\locxbs,\locybs,\locz]\transpose$ and UAV-2 flies
            first to $\loc_1$ and later to $\loc_2\define[\locxue,\locyue,\locz]\transpose$, the resulting trajectory  $\trajectory$ is feasible and $\ttc(\trajectory)< (\locz-\loczbs+\dist)/\maxuavspeed<+\infty$.
        \end{proof}

        \blt[remarks]\arevtwo{Note that this sufficient condition enjoys great generality because it relies on a worst-case map.}
        %
    \end{bullets}%
    \blt[K=2]\arevtwo{Therefore, the case $\numuav=2$ is of special relevance. Since it is also easier to understand, the rest of this section assumes $\numuav=2$;
        the extension to  $\numuav>2$ is addressed in Sec.~\ref{sec:multi-uavs}.}

    \blt[overview]\arevtwo{The proof of \thref{prop:feasibleexists} is constructive and, therefore, provides a valid path. However, to apply PR it is preferable to adopt the approach described next since it  yields a valid  path that attains a significantly smaller objective; cf. Sec.~\ref{sec:experiments}. This approach  first generates the
        path for UAV-2. Then, a path is found for UAV-1 to serve UAV-2
        all the way. If this is not possible, the path of UAV-2 is
        \emph{lifted} until UAV-2 can be served.}

    \blt[generalities]Before delving into the details of the procedure, some notation and terminology needs to be introduced.
    \begin{bullets}%
        \blt[notation]Let 
        $\rateset(\loc,\rate)\define \{\loc'\in\grid:$ $\capacity(\loc,\loc')\geq \rate\}$
        \blt[candidate grid points UAV 1] and note that, for a path to be valid, it is  necessary (but
        not sufficient) that UAV-1 is in $\rateset(\locbs,2\minuavrate)$ throughout the path and in $\rateset(\locbs,$ $2\minuavrate + \minuerate)$ at the moment of establishing connectivity with the UE.
        \blt[notation]Similarly, let $\rateset(\loc,\rate, \rate')\define
            \{\loc''\in \grid | \exists \loc' \in
            \rateset(\loc,\rate):\capacity(\loc',\loc'')\geq \rate' \}$,
        \blt[candidate grid points UAV 2]and note that, for a path to be valid, it is necessary (not
        sufficient) that UAV-2 is in
        $\nodesetut\define \rateset(\locbs,2\minuavrate,\minuavrate)$ throughout the
        path and in $\destsetut\define \rateset(\locbs,2\minuavrate + \minuerate,
            \minuavrate + \minuerate)\cap \rateset(\locue,\minuerate)$ at
        the moment of establishing connectivity with the UE.

        \begin{changes}%

\blt[candidates]To facilitate the exposition, the set $\nodesetut$ will  be referred to as the set of  \emph{candidate} locations for UAV-2 since the  $\minuavrate$ requirement is satisfied if $\loc_2\in\nodesetut$ and UAV-1 is at a suitable location. 
            \blt[destinations]Similarly, $\destsetut$ will be referred to as the set of \emph{destinations} for UAV-2 since the UE rate will be above $\minuerate$ if  $\loc_2\in\destsetut$ and UAV-1 is at a suitable location.



            %

        \end{changes}%
    \end{bullets}%
\end{bullets}%

\subsubsection{Path for UAV-2}
\label{sec:static:pathuav2}

\begin{bullets}%
    \blt[path UAV-2]
    \begin{bullets}%
        \blt[goal]The idea is to start by first planning the
        path of UAV-2 by finding the shortest path (e.g. via
        Dijkstra's algorithm) from the given $\loc_2[0]=\loc_2(0)$
        \blt[graph]
        \begin{bullets}%
            \blt[destination]to the nearest
            point in $\destsetut\subset\nodesetut$ %
            \blt[nodes]through a graph $\graphuavtwo$ with node set $\nodesetut$.
            \blt[edges]In this graph, two nodes $\loc$ and $\loc'$ are connected
            if and only if $(\loc,\loc')\in\adjset$, where $\adjset$ denotes the set of all pairs of points in $\grid$ that are adjacent. 
            \blt[weights]\arevtwo{Since the objective is to minimize the connection time,} the weight of an edge $(\loc,\loc')$ can be set to
            $\|\loc-\loc'\|$ since this distance is proportional to the time it takes for
            UAV-2 to travel from $\loc$ to $\loc'$ at full speed~$\maxuavspeed$.
        \end{bullets}%

        \blt[waypoints] This procedure produces a path
        $\{\loc_2[0],\loc_2[1],\ldots, \loc_2[\numwp_0-1]\}$, where $\numwp_0$ is the length of the shortest path.
        \blt[summary]\arev{The algorithm is summarized as Algorithm~\ref{algo:trajtwo}.}
    \end{bullets}%
\end{bullets}%

\subsubsection{Path for UAV-1}
\label{sec:static:pathuav1}


\begin{bullets}%
    %
    \blt[get path UAV 1]If there exists a  path  for UAV-1 through $\grid$  that provides a sufficient rate to UAV-2 at all the waypoints $\loc_2[0],\loc_2[1],$ $\ldots, \loc_2[\numwp_0-1]$, the combined path will not only be valid but also potentially optimal. As seen later, this will often be the case, but not always.
    \begin{bullets}%
        \blt[waypoints] Formally, for the combined path to be feasible, the position of UAV-1 must satisfy $\loc_1 \in \nodesetuo[\indwp]\define\rateset(\locbs,2\minuavrate) \cap\rateset(\loc_2[\indwp],\minuavrate)$ when UAV-2 is at $\loc_2[\indwp]$. Besides, for the path to be valid,
        \blt[destination set]it is required that $\loc_1 \in \destsetuo\define\rateset(\locbs,2\minuavrate + \minuerate)\cap \rateset(\loc_2[\numwp_0-1],\minuavrate+ \minuerate)\subset \nodesetuo[\numwp_0-1]$ once UAV-2 reaches $ \loc_2[\numwp_0-1]$.
        \begin{changes}%
            \blt[notion of destinations] Along  the lines of the terminology introduced earlier, $\nodesetuo[\indwp]$ will be referred to as the set of  \emph{candidate} locations for UAV-1  at time step $\indwp$ and $\loc_1\in\destsetuo$ as the set of  \emph{destinations} of UAV-1.
        \end{changes}%

        \blt[extended graph] Since the set of candidate positions depends on $\indwp$, the path must be planned through an \emph{extended graph}.
        \begin{bullets}%
            \blt[enodes]Upon letting the set of extended nodes at time $\indwp$ be $\enodesetuo[\indwp] \define
                \{(\indwp,\loc)~|~\loc \in \nodesetuo[\indwp]\}$, the node set of the extended graph is $\enodesetuo \define \cup_{\indwp}\enodesetuo[\indwp]$.
            \blt[first approach]Initially, one can think of finding a path $(0,\loc_1[0]),(1,\loc_1[1]),\ldots,
                (\numwp_0-1,\loc_1[\numwp_0-1])$ such that
            \begin{bullets}%
                \blt $(\indwp,\loc_1[\indwp])\in
                    \enodesetuo[\indwp]~\forall \indwp$,
                \blt  $\loc_1[\numwp_0-1]\in\destsetuo$,
                \blt  and $(\loc_1[\indwp],\loc_1[\indwp+1])\in\adjset~\forall \indwp$.
            \end{bullets}%
            If this is possible, then the combined path $\{\confpt[\indwp]=[\loc_1[\indwp],\loc_2[\indwp]]$, $\indwp=0,\ldots, \numwp_0-1\}$, is, as indicated earlier, potentially optimal. For the cases where it is not possible, two techniques are presented: \emph{waiting} and \emph{lifting}.

            \blt[waiting approach]\textbf{Waiting.}
            \begin{bullets}%
                \blt[motivation]The aforementioned potentially optimal combined path  can  be found when UAV-1 can maintain the connectivity of UAV-2 just by moving to adjacent locations on $\grid$. However, this may not be the case: sometimes UAV-1 may need to perform multiple steps through adjacent locations on $\grid$ to fly around obstacles in order to guarantee the connectivity of UAV-2; see Fig.~\ref{fig:waitingreq}.
                \blt[waiting]In other words, UAV-2 may need to wait at a certain waypoint until UAV-1 adopts a suitable location.
                \blt[UAV-1 path]To allow for this possibility, the form of the  path of UAV-1 is generalized to be
                $(\indwp_0,\loc_1[0]),(\indwp_1,\loc_1[1]),\ldots, (\indwp_{\numwpwithwaits-1},$ $\loc_1[\numwpwithwaits-1])$ for some $\numwpwithwaits$, where
                \begin{bullets}%
                    \blt $(\indwp_\indaux,\loc_1[\indaux])\in \enodesetuo[{\indwp_\indaux}]$,
                    \blt $\indwp_0=0$,
                    \blt  $\loc_1[\numwpwithwaits-1]\in\destsetuo$,
                    \blt $ \indwp_{\indaux-1} \leq \indwp_{\indaux} \leq  \indwp_{\indaux-1} + 1$,
                    \blt  and $(\loc_1[\indaux],\loc_1[\indaux+1])\in \adjset$ for all $\indaux$.
                \end{bullets}%
                In words, the index $\indwp_\indaux$ need not increase monotonically, it suffices that it  does not decrease.
                \blt[UAV-2 path] The corresponding sequence of
                waypoints for UAV-2 will be $\loc_2[\indwp_0], \loc_2[\indwp_1],\ldots,\loc_2[\indwp_{\numwpwithwaits-1}]$. This means that UAV-2 \emph{waits} at $\loc_2[\indwp_\indaux]$ whenever $\indwp_{\indaux}=\indwp_{\indaux+1}$.%
                \begin{figure}%
                    \centering
                    \includegraphics[width=1\linewidth]{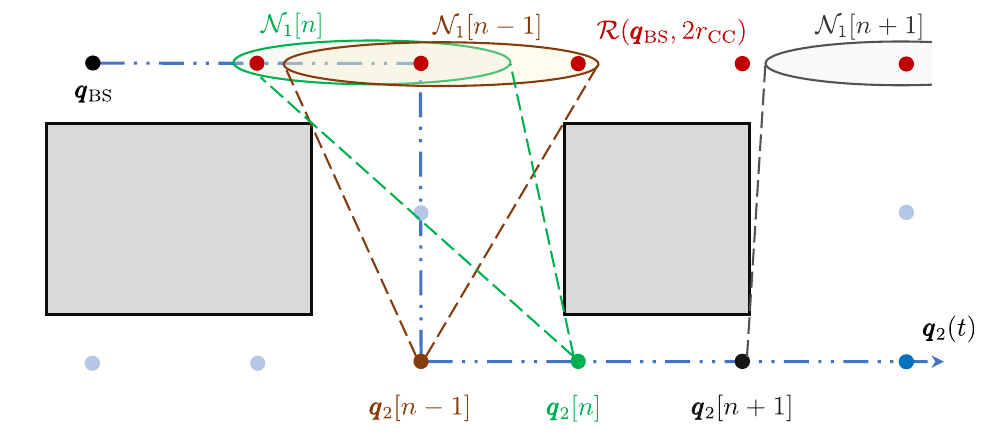}
                    \captionof{figure}{Top view of an example case where no path through adjacent points exists that allows UAV-1 to serve UAV-2 throughout the path of the latter. At some point, UAV-2 may need to wait so that UAV-1 can gain altitude. Grey boxes represent buildings and dots are grid points.   }
                    \label{fig:waitingreq}
                \end{figure}%

                \blt[weights]\arevtwo{
                To enable waiting, the extended graph needs to be modified so that     nodes
                $(\indwp,\loc)$ and $(\indwp',\loc')$ are connected iff $(\loc, \loc')\in \adjset$ and $\indwp\leq \indwp'\leq \indwp+1$. To minimize the time UAV-2 waits, the weight of an edge between $(\indwp,\loc)$ and $(\indwp',\loc')$ is $\|\loc-\loc'\|$.}
            \end{bullets}%
        \end{bullets}%

        \blt[lifting path]\textbf{Lifting.} In certain cases, a  path for UAV-1 may not  be found even with the waiting technique. To remedy this, one can \emph{lift} the  path of UAV-2 to expand the set of candidate locations of UAV-1.
        \begin{bullets}%
            \blt[height of lowest grid higher buildings]To this end, let $\higherbheight$ be the height of the lowest level in $\grid$ that is higher than all obstacles.
            \blt[lift a point]Also, for $\loc\in\grid$, let
            \begin{equation}
                \label{eq:liftfun}
                \liftfun(\loc)\define\begin{cases}
                    \loc+[0,0,\spacingz]\transpose & \text{if } [0,0,1]\loc + \spacingz\leq\higherbheight, \\
                    \loc                           & \text{otherwise},
                \end{cases}
            \end{equation}
            where $\spacingz$ is the spacing along the z-axis between two consecutive levels in $\grid$ and the inner product $[0,0,1]\loc$ returns the z-component (altitude) of $\loc$.
            Additionally, it is convenient to recursively let $\liftfun^{(\indaux)}(\loc)\define\liftfun(\liftfun^{(\indaux-1)}(\loc))$, where $\liftfun^{(1)}(\loc)\define \liftfun(\loc)$.

            \blt[lift a path]
            \arevtwo{
            To  extend  operator $\liftfun$  to paths, 
            let $\pathsingle_2\define\{\loc_2[0],$ $\loc_2[1],\ldots, \loc_2[\numwp_0-1]\}$ be the  path of UAV-2 provided by Algorithm \ref{algo:trajtwo}. Start by defining $\liftfun^{(\indlift)}(\pathsingle_2)=\pathsingle_2$ when $\indlift=0$.
            For the case $\indlift>0$, let $\maxnumliftup\define\min\{\indlift:\liftfun^{(\indlift)}(\loc_2[0])=\liftfun^{(\indlift+1)}(\loc_2[0])\}$ and $\indlift^\downarrow_\text{max}\define\min\{\indlift:\liftfun^{(\indlift)}(\loc_2[\numwp_0-1])=\liftfun^{(\indlift+1)}(\loc_2[\numwp_0-1])\}$ respectively denote the maximum number of times that the initial and final points of $\pathsingle_2$ can be lifted.
            The operator $\liftfun^{(\indlift)}(\pathsingle_2)$, $\indlift>0$, returns the path that results from concatenating the following paths:
            }
            \begin{bullets}%
                \blt[3 subpaths]
                \begin{enumerate}
                    \item an ascent path $\{\loc_2[0],\liftfun^{(1)}(\loc_2[0]),\ldots,\liftfun^{(\indlift^\uparrow-1)}(\loc_2[0])\}$, where $\indlift^{\uparrow}\define\min(\indlift,\maxnumliftup)$,
                    \item the shortest path $\{\loc_2^{(\indlift)}[0],\ldots,\loc_2^{(\indlift)}[\numwp_\indlift-1]\}$ from $\loc_2^{(\indlift)}[0]$ $\define\liftfun^{(\indlift^\uparrow)}$ $(\loc_2[0])$ to $\loc_2^{(\indlift)}[\numwp_\indlift-1]\define\liftfun^{(\indlift^\downarrow)}(\loc_2[\numwp_0-1])$ in the graph of Sec.~\ref{sec:static:pathuav2}, where $\indlift^\downarrow\define\min(\indlift,\indlift^\downarrow_\text{max})$, and
                    \item the descent path $\{\liftfun^{(\indlift^\downarrow-1)}(\loc_2[\numwp_0-1]),\liftfun^{(\indlift^\downarrow-2)}(\loc_2[\numwp_0-1]),$ $\ldots,\loc_2[\numwp_0-1]\}$.
                \end{enumerate}
            \end{bullets}%
        \end{bullets}%

        Lifting the path of UAV-2 expands the set of candidate locations for UAV-1. This motivates iteratively lifting the path of UAV-2 until a suitable path for UAV-1 can be found. Such a procedure is summarized in Algorithm~\ref{algo:trajone}.

        \subsubsection{Theoretical Guarantees}
        \label{sec:static:theoretical}

        The procedure described above and summarized in Algorithm~\ref{algo:trajone} is guaranteed to  eventually succeed under broad conditions:

        \begin{changes}%
            \begin{mytheorem}
                \thlabel{thm:guarantee}
                Let $\grid$ be a sufficiently dense regular grid and let $\confpt[0]=[\locbs,\locbs]$. Suppose that  $\capacity$ is  a TMIA map and 
                \begin{align}
                    \higherbheight\leq&\sqrt{[\capacitydist\inv(2\minuavrate)]^2-[\capacitydist\inv(2\minuavrate+\minuerate)]^2}\\
                    \minuerate\geq&\bandwidth\log_2\left(\frac{1+2^{\pathlossexp}\snrmincc}{1+\snrmincc}\right),
                    \label{lemma1:hypothesis}
                \end{align}
                where $\snrmincc\define2^{\minuavrate/\bandwidth}-1$. If
                a valid path for \eqref{eq:trajproblem}
                exists through waypoints in $\confspacegrid$, then the
                tentative path $\trajectorywpsvalid\define\{[\loc_1[\indaux],\exloc_2[\indwp_\indaux]],~\indaux=0,\ldots,\numwpwithwaits-1\}$ obtained from Algorithm~\ref{algo:trajone} is valid.
            \end{mytheorem}
            \begin{IEEEproof}
                See Appendix~\ref{sec:proof:guarantee}.
            \end{IEEEproof}

        \end{changes}%
        Thus, if $\minuerate$ and  $\minuavrate$ are not too large  relative to the size of the region, the approach in this section results in a valid combined path whenever such a path exists.
    \end{bullets}%
\end{bullets}%

\subsection{Probabilistic Roadmaps with Feasible Initialization}
\label{sec:static-qspace}
This section adapts PR to solve \eqref{eq:trajproblem} by relying on the tentative path produced by Algorithm~\ref{algo:trajone}.

\subsubsection{Construction of the Node Set}
\label{sec:static:nodeset}
\begin{bullets}%
    \blt[overview]As described earlier, the first step in PR is to
    randomly generate a set $\prnodeset$ of CPs. The sampled
    distribution drastically impacts the optimality of the resulting combined path and the computational
    burden of the algorithm. In the work at hand, $\prnodeset$ will
    comprise all the CPs of the path
    $\trajectorywpsvalid=\{\confpt[0],\ldots,\confpt[\numwpwithwaits-1]\}$
    from Sec.~\ref{sec:staticUE-tentative} together with $\numconfpt$ additional
    CPs drawn at random around the CPs in~$\trajectorywpsvalid$.%

    \blt[num samples at each conf pt]Specifically, for each
    $\confpt=[\loc_1,\loc_2] \in \trajectorywpsvalid $, the proposed
    sampling strategy generates $\lfloor
        \numconfpt/\numwpwithwaits\rfloor$ configuration points
    $\confptrnd=[\locrnd_1,\locrnd_2]$ as follows.
    \blt[sample around a location]
    \begin{bullets}%
        \blt First, generate $\locrnd_1$ by drawing a point of
        $\rateset(\locbs,2\minuavrate)-\{\loc_1\}$ with
        probability proportional to $1/\|\locrnd_1-\loc_1\|$.
        \blt Next, independently of $\locrnd_1$, generate
        $\locrnd_2$ by drawing a point of
        $\rateset(\locbs,2\minuavrate,\minuavrate)-\{\loc_2\}$ with
        probability proportional to $1/\|\locrnd_2-\loc_2\|$.
        \blt  If $\locrnd_2 \notin
            \rateset(\locrnd_1,\minuavrate)$, another pair
        $(\locrnd_1,\locrnd_2)$ is generated until $\locrnd_2
            \in \rateset(\locrnd_1,\minuavrate)$, which will
        eventually happen since $\locrnd_2 \in
            \rateset(\locbs,2\minuavrate,\minuavrate)$.
    \end{bullets}%
    %

    \blt[summary] This procedure, which is based on the distance to CPs in
    $ \trajectorywpsvalid $, ensures that many of the sampled CPs lie close to
    the tentative path while others may be farther away, thereby increasing the chances
    for finding nearly optimal paths.
\end{bullets}%

\subsubsection{Construction of the Edge Set}
\label{sec:static:edgeset}

\begin{bullets}%
    \blt[overview] The next step is to construct the edge set of a nearest neighbor graph whose  node set was generated in Sec.~\ref{sec:static:nodeset}.
    \blt[connectivity computation]To obtain trajectories with a lower connection time, it is convenient not to require that each UAV moves through adjacent points in $\grid$: what matters is that the UAVs can move from one CP to another (i) without losing connectivity and (ii) without abandoning $\flyregion$. Thus, an edge $(\confpt,\confpt')$ is added to the edge set if $\confpt$ and $\confpt'$ are nearest neighbors and conditions (i) and (ii) are satisfied. To numerically check (i) and (ii), one can verify that they hold for a finite set of points in the line segment between $\confpt$ and~$\confpt'$.

    \blt[distance]\arevtwo{Since the objective is the connection time}, the weight of an edge between
    $\confpt=[\loc_1,\loc_2]$ and $\confpt'=[\loc'_1,\loc'_2]$ will be given by  the time that the UAVs require to move from $\confpt$ to
    $\confpt'$. Given the speed constraint
    \eqref{eq:maxspeedconstr}, this time is
    determined by the UAV that  traverses the longest distance and, therefore,  equals
    $\max_\induav\|\loc_\induav-\loc'_\induav \|/\maxuavspeed$.
\end{bullets}%

\subsubsection{Path Planning in Q-Space}
\label{sec:static:planning}

\begin{bullets}%
    %
    \blt[shortest path]
    \begin{bullets}%
        \blt[algo]After the nearest-neighbor graph is constructed,
        a shortest path is sought from
        \blt[start]$\confpt_0$
        \blt[destination] to any of
        the CPs $[\loc_1,\loc_2]$ that satisfy $\loc_1 \in
            \rateset(\locbs,2\minuavrate+\minuerate)$ \arev{and $\loc_2 \in
                \rateset(\loc_1,\minuavrate +\minuerate)\cap
                \rateset(\locue,\minuerate)$}.
    \end{bullets}%
    \blt[performance] The resulting CP sequence $\trajectorywpspr$ will never
    have a greater objective than the tentative path $\trajectorywpsvalid$
    provided that all consecutive
    CPs in $\trajectorywpsvalid$ are connected in the PR graph. This
    holds so long as the number of neighbors in Sec.~\ref{sec:static:edgeset} is not too low.%
\end{bullets}%

\subsection{From the Waypoint Sequence to the Trajectory}
\label{sec:fromwptotrajstaticue}
\begin{bullets}
    \blt[given]Given the waypoint sequence
    $\trajectorywpspr=\{\confpt[0],\ldots,\confpt[\prnumwp-1]\}$ \arev{obtained in the previous step},
    it remains only to obtain the trajectory
    $\trajectory$.
    \blt[wp times]To this end, it is necessary  to determine the time
    at which the UAVs arrive at each of the waypoints
    $\confpt[\indwp]$. As indicated in Sec.~\ref{sec:static:planning}, the
    time it takes to arrive at
    $\confpt[\indwp]=[\loc_1[\indwp],\loc_2[\indwp]]$ from
    $\confpt[\indwp-1]=[\loc_1[\indwp-1],\loc_2[\indwp-1]]$ is
    $\max_\induav\|\loc_\induav[\indwp]-\loc_\induav[\indwp-1]
        \|/\maxuavspeed$. Let $t_\indwp$ represent the time at which all the
    UAVs arrive at $\confpt[\indwp]$ and let $t_0=0$. In this case, it clearly holds that
    \begin{align}
        \label{eq:timewp}
        t_\indwp = t_{\indwp-1} + \frac{\max_\induav\|\loc_\induav[\indwp]-\loc_\induav[\indwp-1]
            \|}{\maxuavspeed}.
    \end{align}
    \blt[interpolation] \arev{This provides $\confpt(t_\indwp)=\confpt[\indwp]$} for
    $\indwp=0,\ldots,\prnumwp-1$. For $t\geq
        t_{\prnumwp-1}$, one can simply set
    $\confpt(t)=\confpt(t_{\prnumwp-1})$.  The CPs
    $\confpt(t)$ for other values of $t$ will be determined by the flight controller, which  may be provided  just a sequence of
    waypoints along with their times. For simulation, one can use linear interpolation to resample $\confpt(\cdot)$ at uniform intervals.

    \begin{changesv2}
         Note that, if no lifting steps are used and UAV-2 travels at full speed all the time, the trajectory obtained  by applying \eqref{eq:timewp} directly on the tentative path is optimal  among all trajectories through adjacent gridpoints in $\grid$. This is because no other such a trajectory can attain UE connectivity in a shorter time. The trajectory returned by PRFI will thereby be optimal up to the suboptimality introduced by the spatial discretization.                 
    \end{changesv2}

    \blt[summary] The scheme is summarized as
    Algorithm~\ref{algo:prfi-static} and will be referred to as \emph{PR with
        feasible initialization} (PRFI) for static~UE. \begin{changes}
        Generalizations to $\numuav>2$ and more than one UE are respectively presented in Secs.~\ref{sec:multi-uavs} and \ref{sec:multi-ues}. The complexity analysis can be found in Sec.~\ref{sec:complexity}.
    \end{changes}
\end{bullets}%


\begin{algorithm}[t!]
    \caption{Tentative Path UAV-2, Static UE}
    \label{algo:trajtwo}
    \begin{algorithmic}[1]
        \item[] \textbf{input:} $\grid,\locbs,\locue,\loc_2[0],\minuavrate,\minuerate, \capacity$

        \STATE Find the candidate locations of UAV-2 \ra  $\nodesetut=\rateset(\locbs,2\minuavrate,\minuavrate)$
        \label{step:static-ue:uav2-candidates}
        
        \STATE Find the destinations of UAV-2 \ra $\destsetut=\rateset(\locbs, 2\minuavrate + \minuerate,\minuavrate + \minuerate)\cap \rateset(\locue,\minuerate)$
        \label{step:static-ue:uav2-destinations}

        \STATE Construct graph $\graphuavtwo$  with weights  $\weightfun(\loc,\loc') = \|\loc-\loc'\|$

        \STATE \textbf{return} $\pathsingle_2\define$ shortest\_path($\loc_2[0]$, $\destsetut$)
    \end{algorithmic}
\end{algorithm}

\begin{algorithm}[t!]
    \caption{Tentative Path UAV-1, Static UE}
    \label{algo:trajone}
    \begin{algorithmic}[1]
        \item[] \textbf{input:} $\grid,\locbs,\locue$, $\loc_1[0]$, $\pathsingle_2$, $\minuavrate,\minuerate$, $\capacity$

        \FOR{$\indlift=0,1,\ldots$}

        \STATE $\{\exloc_2[0],\exloc_2[1],\ldots,\exloc_2[\exnumwp_\indlift-1]\}
            \define\liftfun^{(\indlift)}(\pathsingle_2)
        $
        \label{step:static-ue:lift}

        \STATE Find candidate locations of UAV-1 \ra $\nodesetuo[\indwp]=\rateset(\locbs, 2\minuavrate)\cap \rateset(\exloc_2[\indwp],\minuavrate)$
        \label{step:static-ue:uav1-candidates}

        \STATE Find destinations of UAV-1 \ra $\destset_1=\rateset(\locbs, 2\minuavrate + \minuerate)\cap \rateset(\exloc_2[\exnumwp_\indlift-1],\minuavrate + \minuerate)$
        \label{step:static-ue:uav1-destinations}


        \STATE Construct extended graph $\graphuavone$ with weights as in Sec.~\ref{sec:static:pathuav1}.
        \label{step:static-ue:graph}

        \IF{path\_exists($(0,\loc_1[0]),\mathbb{N}\times\destsetuo$)}

        \STATE
        $\{(\indwp_0,\loc_1[0]),(\indwp_1,\loc_1[1]),\ldots, (\indwp_{\numwpwithwaits-1},\loc_1[\numwpwithwaits-1])\}$ \\~\hspace{.4cm}$\define$ shortest\_path($(0,\loc_1[0]),\mathbb{N}\times\destsetuo$)

        \STATE Obtain $\confpt[\indwpwithwaits] = [\loc_1[\indwpwithwaits], \exloc_2[\indwp_{\indwpwithwaits}]]$, $\indwpwithwaits=0,1,\ldots,\numwpwithwaits-1$

        \STATE  \textbf{return} $\trajectorywpsvalid=\{\confpt[0],\confpt[1],\ldots, \confpt[\numwpwithwaits-1]\}$
        \ENDIF

        \ENDFOR
    \end{algorithmic}
\end{algorithm}

\begin{algorithm}[t!]
    \caption{PRFI for Static UE}
    \label{algo:prfi-static}
    \begin{algorithmic}[1]
        \item[] \textbf{input:} $\grid,\locbs,\locue,\confpt_0,\numconfpt, \maxuavspeed,\minuavrate,\minuerate,\capacity$

        \STATE \label{step:tentative} $\pathsingle_2\define$  tentative path for UAV-2 via Algorithm \ref{algo:trajtwo}

        \STATE \label{step:combine} $\trajectorywpsvalid\define$ combined tentative path via Algorithm \ref{algo:trajone}

        \STATE  For each $\confpt \in
            \trajectorywpsvalid $, draw
        $\lfloor \numconfpt/\numwpwithwaits\rfloor$ CPs \ra $\prnodeset$

        \STATE Construct a nearest-neighbor graph from $\prnodeset$

        \STATE $\trajectorywpspr\define$ shortest\_path($\confpt_0$, $\{\confpt \in \prnodeset:\uerate(\confpt)\geq\minuerate\}$)
        \label{step:static-shortest-path}

        \STATE Compute waypoints  $\{(t_\indwp,\confpt(t_\indwp))\}_\indwp$ as  in Sec.~\ref{sec:fromwptotrajstaticue}

        \STATE \textbf{return}  $\{(t_\indwp,\confpt(t_\indwp))\}_\indwp$

    \end{algorithmic}
\end{algorithm}

\begin{changesv3}    
    \begin{myremark}\label{rem:synchronize}
        To ensure that the UAVs properly follow the planned trajectories, their on-board localization and synchronization systems should be sufficiently accurate. If, e.g. due to wind or unexpected obstacles, a UAV cannot reach a planned waypoint on time, it should inform the rest so they wait and  connectivity is maintained. Replanning may be necessary.
    \end{myremark}
    \begin{myremark}\label{rem:dynamicenv}
        For deployments in dynamic environments, it is convenient to (i) estimate  the capacity map frequently with measurements collected by the UAVs themselves and to (ii) replan the trajectories whenever the capacity map estimate is updated. This may be crucial to guarantee connectivity since obstacles may move or new obstacles appear.
    \end{myremark}
\end{changesv3}

\section{Path Planning for Moving UE}
\label{sec:moving}

\cmt{summary}
\begin{bullets}%
    \blt[overview]
    \begin{bullets}%
        \blt[intro] This section solves \eqref{eq:trajproblem} when the UE moves.
        \blt[cannot min. connect time]\arevtwo{Since minimizing the connection time is not meaningful in this scenario (cf.  Sec.~\ref{sec:problemformulation}), the focus will be on optimizing  the outage time  \eqref{eq:outageobj} and the amount of transferred data \eqref{eq:cumrateobj}.}%
    \end{bullets}%

    \blt[algorithm] Along the lines of Sec.~\ref{sec:static}, the algorithm here is referred to as \emph{PRFI for moving UE} and also   adapts PR to find a path in Q-space given a tentative path.
    \blt[trajectory sampling]
    \begin{bullets}%
        \blt[diff wrt static]However, the algorithm developed in this section significantly differs from the one in Sec.~\ref{sec:static} due to the different temporal dynamics of the problems they address: whereas in Sec.~\ref{sec:static} the UAVs must move at maximum speed to reach the destination as fast as possible, in the case of a moving UE, the time at which the UAVs must arrive at each waypoint is dictated by the trajectory of the UE.
        \blt[regular sampling]For this reason, the paths of the UE and UAVs will be obtained by sampling their trajectories at a regular interval $\sampint$.
        \blt[UE path]Specifically, the path of the UE will be represented as $\trajectoryue:=\{\locue[0],\locue[1],\ldots,\locue[\numwpue-1]\}$, where $\locue[\indwp]=\locue(\indwp\sampint)$, $\indwp=0,\ldots,\numwpue-1$.
        \blt[samp int vs uav speed]Similarly, the paths of the UAVs will be planned in such a way that each one is at a point of $\grid$ at every sampling instant. This requires that $\sampint$ is small enough so that the UAVs can move from one grid point to any adjacent one in this time.
    \end{bullets}%
\end{bullets}%

\subsection{Planning the Tentative Path}
\label{sec:moving-tentative}
\begin{bullets}%
    \blt[generalities]
    \begin{bullets}%
        \blt[$\numuav=2$]
        \arevtwo{
            As in Sec.~\ref{sec:static}, the explanation will assume $\numuav=2$  UAVs; the case $\numuav>2$ is addressed in Sec.~\ref{sec:multi-uavs}.}
        
        \blt[metric] Recall that the closer the tentative path to the optimal combined path, the greater the optimality of the combined path returned by PR. Therefore, it is desirable that the tentative path approximately minimizes the adopted metric.
        \begin{bullets}%
            \blt[min outage] In the case of the  outage time, this can be readily accomplished by planning the path of both UAVs separately along the lines of Sec.~\ref{sec:static}.
            \blt[cum rate] However, when the metric is the one in \eqref{eq:cumrateobj}, such an approach
            is not viable  because the UE rate is a function of the
            positions of both UAVs. As noted in Sec.~\ref{sec:introroadmap},
            planning such a path jointly would be computationally prohibitive. Thus, with this metric, the tentative path will
            still be planned to minimize the outage time. PR will then
            optimize the path in Q-space to maximize the metric in \eqref{eq:cumrateobj}.
        \end{bullets}%
    \end{bullets}%
\end{bullets}%
\subsubsection{Path for UAV-2}
\begin{bullets}%
    \blt[path for UAV2]
    \begin{bullets}%
        %
        \blt[goal]Given that the goal is to minimize the outage time, one would ideally like to \emph{impose} that UAV-2 remains in the
        set of locations where it can provide $\minuerate$ to the UE for a
        suitable location of UAV-1. Since this set generally changes over time and,
        therefore, such an approach need not be feasible, a reasonable
        alternative is to \textit{encourage} UAV-2 to stay in these sets of locations
        \begin{bullets}%
            \blt[the need for extended graph] by planning a path through  an
            extended graph with properly weighted edges.
        \end{bullets}%

        \blt[extended graph] To construct such a graph, note that
        \begin{bullets}%
            \blt[extended nodes]
            \begin{bullets}%
                \blt[candidate] the set of {candidate locations} of UAV-2 is $\nodesetut\define\rateset(\locbs,2\minuavrate,\minuavrate)$ and does not depend on the location of the UE. With $\enodesetut[\indwp]\define\{(\indwp,\loc)|\loc\in\nodesetut\}$ denoting the set of extended nodes at time step $\indwp$, the node set of the extended graph is $\enodesetut\define\cup_\indwp\enodesetut[\indwp]$.
                \blt[destinations]In contrast, the set of destinations  $\destsetut[\indwp]\define\rateset(\locbs,2\minuavrate+\minuerate,\minuavrate+\minuerate)\cap\rateset(\locue[\indwp],\minuerate)$, which comprises those locations where UAV-2 can provide $\minuerate$ to the UE, does generally change over time as it depends on $\locue[\indwp]$. The corresponding set of extended nodes at time step $\indwp$ is given by $\edestsetut[\indwp]\define\{(\indwp,\loc)|\loc\in\destsetut[\indwp]\}\subset\enodesetut[\indwp]\subset\enodesetut$.
            \end{bullets}

            \blt[edges]
            \begin{bullets}%
                \blt[def]In this graph, nodes $(\indwp,\loc)$ and $(\indwp',\loc')$ are connected by an edge iff $\indwp'= \indwp+1$ and $(\loc',\loc)\in \adjset$.
                \blt[paths]In this way, a path for UAV-2 is a sequence of extended nodes $(0,\loc_2[0]),(1,\loc_2[1]),\ldots,$ $(\numwpue-1,\loc_2[\numwpue-1])$ where
                \begin{bullets}%
                    \blt $(\indwp,\loc_2[\indwp])\in\enodesetut[\indwp]$
                    \blt and $(\loc_2[\indwp],\loc_2[\indwp+1]) \in \adjset\forall \indwp$.
                \end{bullets}%
            \end{bullets}%
            \blt[weights - connectivity] The weight  of an edge $((\indwp,\loc), (\indwp^\prime,\loc^\prime))$, which captures  the cost of traveling from $(\indwp,\loc)$ to $(\indwp^\prime,\loc^\prime)$,
            is given by
            \begin{align}
                \label{eq:costmovingueuavt}
                \weightfun((\indwp,\loc), & (\indwp',\loc')
                )=                                                                                                     \begin{cases}
                                                                                                                           0              & \text{ if } \loc = \loc', \loc'\in\destsetut[\indwp']    \\
                                                                                                                           1              & \text{ if } \loc \neq \loc', \loc'\in\destsetut[\indwp'] \\
                                                                                                                           \weightpenalty & \text{ if } \loc^\prime\notin\destsetut[\indwp'],
                                                                                                                       \end{cases}
            \end{align}
            where $\weightpenalty$ is a large positive number that encourages UAV-2 to stay in $\edestsetut[\indwp]$ at time step $\indwp$. Observe that, even when UAV-2 remains in these sets, the cost is greater if it moves.
            %
        \end{bullets}%

        \blt[start and end enodes] A shortest path algorithm is used to find the path of UAV-2 from the extended node corresponding  to the take-off location to any extended node in $\enodesetut[\numwpue-1]$. The procedure to find the path of UAV-2 is summarized as Algorithm~\ref{algo:pathtwo-movingue}.
    \end{bullets}%
\end{bullets}%

\subsubsection{Path for UAV-1}
\label{sec:moving:tentative-uav1}
\begin{bullets}%
    \blt[effect on cost]With the cost in \eqref{eq:costmovingueuavt}, the number of time slots where UAV-2 is in a location that can provide $\minuerate$ to the UE for a suitable location of UAV-1 is maximized.
    By suitable location it is meant that UAV-1 can provide $\minuavrate+\minuerate$ to UAV-2.
    Unfortunately, since the set of suitable locations for UAV-1 changes with $\indwp$, it may not be possible for UAV-1 to be in a suitable location all the time. By finding a  path for UAV-1 so that it stays within these sets as much as possible, the combined path will approximately minimize the outage time.

    \blt[planning path for UAV-1]
    \begin{bullets}%
        %
        \blt[extended graph]To this end, the  path must be  planned through an extended graph.
        \begin{bullets}%
            \blt[extended nodes]%
            \begin{bullets}%
                \blt[given path uav2]Let $\pathsingle_2=\{\loc_2[0],\loc_2[1],\ldots,$ $\loc_2[\numwpue-1]\}$ be the  path of UAV-2 returned by Algorithm~\ref{algo:pathtwo-movingue}.
                \blt[candidate]The set of candidate locations of UAV-1 at time step $\indwp$ is  $\nodesetuo[\indwp]\define\rateset(\locbs,2\minuavrate)\cap\rateset(\loc_2[\indwp],\minuavrate)$ and the associated set of extended nodes is $\enodesetuo[\indwp]\define\{(\indwp,\loc)|\loc\in\nodesetuo[\indwp]\}$. The node  set of the  extended graph  is therefore $\enodesetuo\define\cup_\indwp\enodesetuo[\indwp]$.
                \blt[destinations] On the other hand, the set of destinations of UAV-1 at time step $\indwp$ is given by  $\destsetuo[\indwp]\define\rateset(\locbs,2\minuavrate+\minuerate)\cap\rateset(\loc_2[\indwp],\minuavrate+\minuerate)\subset\nodesetuo[\indwp]$.

            \end{bullets}%
            \blt[edges]
            \begin{bullets}%
                \blt[no waiting]As opposed to  Sec.~\ref{sec:staticUE-tentative}, UAV-2 cannot wait for UAV-1 since that would introduce an offset between a part of $\pathsingle_2$ and $\trajectoryue$.
                \blt[def]Nodes $(\indwp,\loc)$ and $(\indwp',\loc')$ are therefore connected by an edge iff $\indwp'= \indwp+1$ and $(\loc',\loc)\in \adjset$.
                \blt[example path] This means that the path of UAV-1  in the extended graph will have the form $(0,\loc_1[0]),(1,\loc_1[1]),\ldots,(\numwpue-1,\loc_1[\numwpue-1])$, where
                \begin{bullets}%
                    \blt[] $(\indwp,\loc_1[\indwp])\in\enodesetuo[\indwp]$
                    \blt[] and $(\loc_1[\indwp],\loc_1[\indwp+1]) \in \adjset~\forall \indwp$.
                \end{bullets}%
            \end{bullets}%
            \blt[weights - connectivity] Similarly to  UAV-2, \arev{the weight of the edge from $(\indwp,\loc)$ to $(\indwp^\prime,\loc^\prime)$ is} given by \eqref{eq:costmovingueuavt} with $\destsetuo[\indwp']$ in place of $\destsetut[\indwp']$.
        \end{bullets}%

        \blt[end and start] The goal is, therefore, to  find a path  from the extended node corresponding to the take-off location to any node in $\enodesetuo[\numwpue-1]$. Among such feasible paths, a shortest path algorithm picks the one  that results in the lowest accumulated cost, hence the lowest outage time if $\weightpenalty$ is sufficiently large.

        \blt[lifting]\textbf{Lifting.} As in Sec.~\ref{sec:static},   given $\pathsingle_2$ there may be 
        no path for UAV-1 such that the combined path is feasible. Similarly to Sec.~\ref{sec:static:pathuav1}, one can remedy this by lifting $\pathsingle_2$ since this generally expands the sets of candidate locations of UAV-1.
        \begin{changes}%
            \begin{bullets}%
                \blt[diff. in num wps]However, the lifting operator to be used here differs from the one in Sec.~\ref{sec:static:pathuav1} since it must preserve the path length. 

                \blt[lifting operator]
                \begin{bullets}%
                    \blt[tentative path uav2]Consider an arbitrary path $\pathsingle=\{\loc[0],\loc[1],\ldots,\loc[\numwp-1]\}$.
                    \blt[lift and add] Let $\addfun(\pathsingle)\define\{\loc[0],\liftfun(\loc[0]),\liftfun(\loc[1]),\ldots,$ $\liftfun(\loc[\numwp-1])\}$, where $\liftfun$ is  defined in \eqref{eq:liftfun}, be an operator that lifts each point and appends the first one at the beginning. Observe that the length of $\addfun(\pathsingle)$ equals the length of $\pathsingle$ plus 1.  Also, let $\addfun^{(1)}(\pathsingle)\define\addfun(\pathsingle)$ and $\addfun^{(\indlift)}(\pathsingle)\define\addfun(\addfun^{(\indlift-1)}(\pathsingle))$.
                    \blt[remove 1st pt]On the other hand, let $\trimfun(\pathsingle)\define$ $\{\loc[0],\loc[1],\ldots,$ $\loc[\indfirstrepeatedwp-1],\loc[\indfirstrepeatedwp+1],\ldots,$ $\loc[\numwp-1]\}$, where $\indfirstrepeatedwp$ is the smallest $\indwp$ such that $\loc[{\indwp}]=\loc[{\indwp}+1]$ and $\indfirstrepeatedwp=\numwp-1$ if  $\loc[{\indwp}]\neq\loc[{\indwp}+1]$ for all $\indwp$. Observe that the length of $\trimfun(\pathsingle)$ equals the length of $\pathsingle$ minus 1.
                    Also, $\trimfun^{(1)}(\pathsingle)\define\trimfun(\pathsingle)$ and $\trimfun^{(\indaux)}(\pathsingle)\define\trimfun(\trimfun^{(\indaux-1)}(\pathsingle))$.
                    \blt[new lifting operator]Finally,
                    let $\lifttrimfun^{(0)}(\pathsingle)= \pathsingle$ and let
                      $\lifttrimfun^{(\indlift)}(\pathsingle)\define\trimfun^{(\indlift)}(\addfun^{(\indlift)}(\pathsingle))$ for $\indlift>0$. Note that (i) the length of $\lifttrimfun^{(\indlift)}(\pathsingle)$ equals the length of $\pathsingle$, and (ii), if $\pathsingle$ is a path where each pair of consecutive waypoints are adjacent in $\graph$, the same holds for $\lifttrimfun^{(\indlift)}(\pathsingle)$.
                \end{bullets}%

                \blt[iterative lifiting]
                \begin{bullets}%
                    \blt[overall procedure]As in Sec.~\ref{sec:static:pathuav1}, the lifting operator is  iteratively applied to $\pathsingle_2$ until a path $\{\loc_1[0],\loc_1[1],\ldots,\loc_1[\numwpue-1]\}$ for UAV-1 is found.
                    %
                    %
                    %
                    \blt[combined path]
                    With $\indlift$  the number of required lifting steps,
                    the tentative path is then $\trajectorywpsfeas=\{\confpt[0],\ldots,\confpt[\numwpue-1]\}$, where $\confpt[\indwp]=[\loc_1[\indwp], \exloc_2[\indwp]]$
                    for $\{\exloc_2[0],\ldots,\exloc_2[\numwpue-1]\}\define\lifttrimfun^{(\indlift)}(\pathsingle_2)$.
                    %
                \end{bullets}%
                \blt[summary]\arev{This procedure  is summarized as Algorithm~\ref{algo:pathone-movingue}.}
            \end{bullets}%
        \end{changes}%
    \end{bullets}%

\end{bullets}%
\subsubsection{Theoretical Guarantees}
\begin{bullets}%
    \blt[theoretical guarantees]As in Sec.~\ref{sec:static:theoretical}, it is possible to guarantee that the above iterative lifting procedure will eventually produce a feasible path.

    \begin{bullets}%
        \blt[flight level at h max] Let $\higherbheight$ be the height of the lowest grid level that is higher than all  obstacles and let $\gridmaxliftlevel\define\{\loc\in\grid:[0,0,1]\loc=\higherbheight\}$ be the grid level of height $\higherbheight$. Let $\cylinder(\cylinderradius) \define \{[\locx,\locy,\locz]\transpose\in\rfield^3: (\locx -\locxbs)^2 + (\locy-\locybs)^2\leq\cylinderradius^2\}$  be a cylinder of radius $\cylinderradius$ centered at $\locbs$ and let $\cylinderradiusmin$ be the smallest $\cylinderradius$ such that $\grid \subset \cylinder(\cylinderradius)$ or, equivalently, the maximum horizontal distance from the BS to any point in $\grid$.
        \blt[sufficient cond]
        \begin{mytheorem}
            \thlabel{prop:feasible-movingue}
            Suppose that $\confpt_0=[\locbs,\locbs]$ and
            let $\pathsingle_2$ be the  path of UAV-2 returned by Algorithm~\ref{algo:pathtwo-movingue}. If $\higherbheight\leq\capacitydist\inv(2\minuavrate)$ and
            $\cylinderradiusmin\leq\capacitydist\inv(\minuavrate)$,
            then Algorithm~\ref{algo:pathone-movingue} will provide a feasible combined path.
        \end{mytheorem}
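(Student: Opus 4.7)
The plan is to construct, for $\indlift$ sufficiently large, an explicit feasible path for UAV-1 compatible with the lifted UAV-2 path $\tilde{\pathsingle}_2\define\lifttrimfun^{(\indlift)}(\pathsingle_2)$. Because Algorithm~\ref{algo:pathone-movingue} iterates over $\indlift=0,1,2,\ldots$ until a feasible UAV-1 path is found, exhibiting one such $\indlift$ proves termination and hence feasibility of the returned combined path.

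The first step is to analyze the structure of $\tilde{\pathsingle}_2$ for large $\indlift$. Writing out $\addfun^{(\indlift)}(\pathsingle_2)$, position $0$ is $\loc_2[0]=\locbs$; positions $1,\ldots,\indlift$ are $\liftfun(\locbs),\ldots,\liftfun^{(\indlift)}(\locbs)$ and form an ascent that saturates at altitude $\higherbheight$ after $\maxnumliftup$ steps; positions $\indlift+1,\ldots,\indlift+\numwpue-1$ are $\liftfun^{(\indlift)}(\loc_2[\indwp])$ for $\indwp=1,\ldots,\numwpue-1$, all of which lie in the grid level $\gridmaxliftlevel$ once $\indlift$ exceeds the altitude gap between $\higherbheight$ and every $\loc_2[\indwp]$. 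The $\trimfun$ operator, which deletes either the first repeated pair or (if none exists) the final element, is applied $\indlift$ times. I would argue that $\indlift-\maxnumliftup$ of these deletions act on the plateau of identical points at $(\locxbs,\locybs,\higherbheight)$ produced by the saturated ascent, while the remaining $\maxnumliftup$ deletions remove elements from the tail. What remains is a length-$\numwpue$ path that starts with the ascent $\locbs,\liftfun(\locbs),\ldots,\liftfun^{(\maxnumliftup)}(\locbs)$ and whose subsequent $\numwpue-\maxnumliftup-1$ waypoints all lie in $\gridmaxliftlevel$, with adjacency on $\grid$ preserved throughout.

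With this structure in hand, the second step is to exhibit the companion UAV-1 path $\loc_1[\indwp]=\liftfun^{(\min(\indwp,\maxnumliftup))}(\locbs)$, that is, UAV-1 ascends together with UAV-2 for the first $\maxnumliftup+1$ time steps and then remains stationary at altitude $\higherbheight$ directly above the BS. This sequence starts at $\loc_1[0]=\locbs$ (matching $\confpt_0=[\locbs,\locbs]$), each consecutive pair is adjacent in $\grid$ (either a vertical step of $\spacingz$ or a self-loop), and it remains to verify the membership $\loc_1[\indwp]\in\nodesetuo[\indwp]=\rateset(\locbs,2\minuavrate)\cap\rateset(\exloc_2[\indwp],\minuavrate)$. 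The containment in $\rateset(\locbs,2\minuavrate)$ follows because $\loc_1[\indwp]$ lies directly above $\locbs$ at altitude at most $\higherbheight\leq\capacity\inv(2\minuavrate)$, so the link is LOS and $\capacity(\locbs,\loc_1[\indwp])\geq\capacity(\higherbheight)\geq 2\minuavrate$. For the second containment, during the ascent we have $\exloc_2[\indwp]=\loc_1[\indwp]$ by construction, so the distance is zero; after the ascent both waypoints lie in $\gridmaxliftlevel$ (so the link is LOS) with horizontal separation at most $\cylinderradiusmin\leq\capacity\inv(\minuavrate)$, giving rate at least $\minuavrate$. This makes the combined path feasible.

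The main obstacle will be the bookkeeping in the first step: justifying the claimed action of $\trimfun$ when $\pathsingle_2$ itself contains stay-put transitions (permitted by the weighted edges in \eqref{eq:costmovingueuavt}), which introduce additional repeated pairs in the lifted suffix. In such cases, the deletions may be distributed differently between the plateau and the suffix, but since each $\trimfun$ only removes a duplicate of an already-present waypoint, the set of distinct positions traversed along the suffix is unchanged (still adjacent in $\grid$ and still at altitude $\higherbheight$), and the ascent prefix is untouched because its waypoints have strictly increasing altitudes. Hence the feasibility argument for the proposed UAV-1 path applies in full generality, completing the proof.
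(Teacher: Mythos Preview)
Your proof is correct and takes essentially the same approach as the paper: both consider the maximally lifted UAV-2 path, decompose it into an ascent prefix (from $\locbs$ up to altitude $\higherbheight$) followed by a horizontal segment entirely in $\gridmaxliftlevel$, and let UAV-1 ascend in lockstep and then remain stationary above the BS, verifying the two rate conditions via $\higherbheight\leq\capacity\inv(2\minuavrate)$ and $\cylinderradiusmin\leq\capacity\inv(\minuavrate)$. Your treatment is actually somewhat more careful than the paper's, which simply asserts the ascent/top decomposition of $\lifttrimfun^{(\numlift)}(\pathsingle_2)$ without tracking how the $\trimfun$ deletions distribute between the plateau and the tail when $\pathsingle_2$ contains stay-put transitions.
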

        \blt[proof]
        \begin{proof}
            See Appendix~\ref{sec:proof:feasible-movingue}.
        \end{proof}

        \blt[outage time]It is also easy to see that,
        for a  sufficiently large $\weightpenalty$,
        the tentative path is not only feasible but also optimal in terms of outage time if no lifting steps are required and UAV-1 remains at destination points throughout the entire path.
        \blt[data transfer]When it comes to total transferred data, the tentative path will not generally be optimal, but can reasonably be expected to be similar to  the optimal path in many cases.
    \end{bullets}%
\end{bullets}%

\subsection{Probabilistic Roadmaps with Feasible Initialization}
\label{sec:moving-qspace}
The next step is to find a combined path  around the tentative path that approximately  optimizes the considered metric. Since the set of candidate CPs changes over time, an extended graph needs to be adopted. To operate on this graph, the PR algorithm in Sec.~\arev{\ref{sec:introroadmap}} will be generalized.

\subsubsection{Construction of the Node Set}
\begin{bullets}%
    \blt[sampling conf. pts] In addition to
    \begin{bullets}%
        \blt[on tentative path]the CPs in the tentative path $\trajectorywpsfeas=\{\confpt[0],\ldots,\confpt[\numwpue-1]\}$, the algorithm  draws $\numconfpt\geq\numwpue$ additional CPs. In particular,
        %
        \blt[num samples at each cp]   $\lfloor \numconfpt /\numwpue \rfloor$ CPs are drawn
        around each $\confpt[\indwp]$
        as in Sec.~\arev{\ref{sec:static:nodeset}}.
    \end{bullets}%
    \blt[extended nodes]With $\prnodeset[\indwp]$ representing the set that contains $\confpt[\indwp]$ and the CPs drawn around $\confpt[\indwp]$, the set of extended nodes at time step $\indwp$ is given by  $\enodeset[\indwp]\define\{(\indwp,\confpt)|\confpt\in\prnodeset[\indwp]\}$.
\end{bullets}%

\subsubsection{Construction of the Edge Set}
\label{sec:movingue:predgeset}
\begin{bullets}%
    \blt[edges] Two extended nodes $(\indwp,\confpt)$ and $(\indwp',\confpt')$ are connected by an edge iff $\indwp'= \indwp+1$, $(\loc_1, \loc_1')\in\adjset$, and $(\loc_2, \loc_2')\in\adjset$, where $\confpt=[\loc_1,\loc_2]$ and $\confpt'=[\loc_1',\loc_2']$.
    \blt[weights] The edge weights  depend on the metric to be optimized.
    \begin{bullets}%
        \blt[min outage] To minimize the outage time, \arevtwo{one needs to minimize the number of time slots in which $\uerate(\confpt[\indwp],\locue[\indwp])$ is below $\minuerate$ (this follows by discretizing \eqref{eq:outageobj}  as $\funobj(\trajectory) \approx\sampint\sum_0^{\numwp-1} \funindicator[\uerate(\confpt[\indwp],\locue[\indwp])<\minuerate]$). As a result, one can set
        }
        \begin{align}
            \label{eq:weightsoutage}
            \weightfun((\indwp, \confpt),(\indwp', \confpt'))=                                     \\
            \nonumber
             & \begin{cases}
                   0              & \text{ if } \confpt = \confpt', \uerate(\confpt')\geq\minuerate    \\
                   1              & \text{ if } \confpt \neq \confpt', \uerate(\confpt')\geq\minuerate \\
                   \weightpenalty & \text{ if } \uerate(\confpt')<\minuerate,
               \end{cases}
        \end{align}
        where $\weightpenalty$ is again a large positive number.
        \blt[transferred data] To maximize the transferred data,
        observe that the integral in \eqref{eq:cumrateobj} can be discretized as
        \begin{align}
            \funobj(\trajectory) \approx -\sampint\sum_{\indwp=0}^{\numwpue-1}
            \uerate(\confpt[\indwp],\locue[\indwp])
            \label{eq:cumrateobjdisc}
            .
        \end{align}
        Since a shortest path algorithm  minimizes the sum of the weigths of the edges in a path, one can therefore set $\weightfun((\indwp, \confpt),(\indwp', \confpt'))= -\uerate(\confpt', \locue[\indwp'])$
        \footnote{\begin{changesv2}Indeed, other additive (cummulative) metrics (objectives) can also be optimized by the same approach.\end{changesv2}}.
    \end{bullets}%
\end{bullets}%
\subsubsection{Path Planning in Q-Space}
\begin{bullets}%
    \blt[shortest path] A shortest-path algorithm is then used to find the path  $\trajectorywpspr=\{\confpt[0],\confpt[1],\ldots,\confpt[\numwpue-1]\}$ of the UAVs in the extended graph from \arev{$\confpt_0$} to an extended node in $\enodeset[\numwpue-1]$ that results in the lowest accumulated cost.
    \blt[diff PR]This differs from standard PR, which finds a shortest path through a nearest-neighbor graph.%
\end{bullets}%

\subsection{From the Waypoint Sequence to the Trajectory}
\label{sec:movingwptotraj}

\begin{bullets}%
    \blt[sampling] As described at the beginning of Sec.~\ref{sec:moving},
    the produced path for the UAVs is sampled at regular intervals $\sampint$.
    \blt[given] Thus, given  $\trajectorywpspr$,
    \blt[trajectory] the final trajectory  $\trajectory$  satisfies $\confpt(\indwp\sampint)=\confpt[\indwp],\indwp=0,\ldots,\numwpue-1$. As indicated earlier, the position of the UAVs at  intermediate time instants is determined by the flight controller.
    \begin{changesv2}
        \begin{bullets}%
            \blt[comparison] Observe that
            \begin{bullets}%
                \blt[moving] PRFI for moving UEs returns a path in which the UAVs fly to  adjacent grid points at each  time step, albeit not necessarily at maximum speed.
                \blt[static] Meanwhile, PRFI for static UEs returns a path in which the UAVs fly at their maximum speed from $\confpt[\indwp]$ to $\confpt[\indwp+1]$, but, $\loc_{\induav}[\indwp]$ is not necessarily adjacent to $\loc_{\induav}[\indwp+1]$ on the grid. Therefore, although PRFI for moving UEs can be used to serve static UEs, PRFI for static UEs generally has greater~optimality.
            \end{bullets}%
        \end{bullets}%
    \end{changesv2}

    \blt[summary] The complete procedure is summarized as Algorithm~\ref{algo:prfi-moving};
    \begin{changes}
        see  Secs.~\ref{sec:multi-uavs}, \ref{sec:multi-ues} and~\ref{sec:complexity} for generalizations and complexity analysis.
    \end{changes}

\begin{algorithm}[t!]
    \caption{Tentative Path UAV-2, Moving UE}
    \label{algo:pathtwo-movingue}
    \begin{algorithmic}[1]
        \item[] \textbf{input:} $\grid,\locbs,\loc_2[0],$
        \begin{nonextendedonly}
            $\trajectoryue, $
        \end{nonextendedonly}
        \begin{extendedonly}
            \\~\hspace{.8cm} $\trajectoryue =\{\locue[0],\locue[1],\ldots,\locue[\numwpue-1]\}, $
        \end{extendedonly}
        $\minuavrate,\minuerate,\capacity$

        \STATE Find the candidate locations of UAV-2 \ra $\nodesetut[\indwp]=\rateset(\locbs,2\minuavrate,\minuavrate)$
        \label{step:moving-ue:uav2-candidates}

        \STATE Find the destinations of UAV-2 \ra $\destsetut[\indwp]=\rateset(\locbs,2\minuavrate+\minuerate,\minuavrate+\minuerate)\cap\rateset(\locue[\indwp],\minuerate)$
        \label{step:moving-ue:uav2-destinations}

        \STATE Construct extended graph $\graph_3$ with weights  \eqref{eq:costmovingueuavt}

        \STATE $\pathsingle_2\define$ shortest\_path($(0,\loc_2[0])$, $\enodesetut[\numwpue-1]$)

        \STATE \textbf{return} $\pathsingle_2=\{\loc_2[0],\loc_2[1],\ldots,\loc_2[\numwpue-1]\}$
    \end{algorithmic}
\end{algorithm}

\begin{algorithm}[t!]
    \caption{Tentative Path UAV-1, Moving UE}
    \label{algo:pathone-movingue}
    \begin{algorithmic}[1]
        \item[] \textbf{input:} $\grid,\locbs,\loc_1[0], \pathsingle_2,\minuavrate,\minuerate,\capacity$

        \FOR{$\indlift=0,1,\ldots$}

        \STATE $\{\exloc_2[0],\exloc_2[1],\ldots,\exloc_2[\numwpue-1]\}\define\lifttrimfun^{(\indlift)}(\pathsingle_2)$
        \label{step:moving-ue:lift}

        \STATE Find the candidate locations of UAV-1 \ra $\nodesetuo[\indwp]=\rateset(\locbs,2\minuavrate)\cap\rateset(\exloc_2[\indwp],\minuavrate)$
        \label{step:moving-ue:uav1-candidates}

        \STATE Find the destinations of UAV-1 \ra $\destsetuo[\indwp]=\rateset(\locbs,2\minuavrate+\minuerate)\cap\rateset(\exloc_2[\indwp],\minuavrate+\minuerate)$\\[.05cm]
        \label{step:moving-ue:uav1-destinations}
        
        \STATE Form extended nodes \ra $\enodesetuo[\indwp]\define\{(\indwp,\loc)|\loc\in\nodesetuo[\indwp]\}$

        \STATE Construct extended graph $\graph_4$ with weights given in Sec.~\ref{sec:moving:tentative-uav1}.

        \IF{path\_exists($(0,\loc_1[0])$, $\enodesetuo[\numwpue-1]$)}
        \STATE  $\{(0,\loc_1[0]),\ldots, (\numwpue-1,\loc_1[\numwpue-1])\}$ \\\hspace{.5cm}$=$~shortest\_path($(0,\loc_1[0])$,~$\enodesetuo[\numwpue-1]$)

        \STATE Obtain $\confpt[\indwp] = [\loc_1[\indwp], \exloc_2[\indwp]]$, $\indwp=0,1,\ldots,\numwpue-1$

        \STATE \textbf{Return} $\trajectorywpsfeas=\{\confpt[0],\confpt[1],\ldots, \confpt[\numwpue-1]\}$
        \ENDIF
        \ENDFOR
    \end{algorithmic}
\end{algorithm}

\begin{algorithm}[t!]
    \caption{PRFI for Moving UE}
    \label{algo:prfi-moving}
    \begin{algorithmic}[1]
        \item[] \textbf{input:} $\grid,\locbs,\confpt_0, \trajectoryue,\numconfpt,\sampint,\minuavrate,\minuerate,\capacity$

        \STATE $\pathsingle_2\define$ tentative path for UAV-2 via Algorithm \ref{algo:pathtwo-movingue}

        \STATE $\trajectorywpsfeas\define$ combined tentative path via Algorithm \ref{algo:pathone-movingue}

        \STATE  For each $\confpt[\indwp] \in
            \trajectorywpsfeas$, draw
        $\lfloor \numconfpt/\numwpue\rfloor$ CPs \ra $\enodeset[\indwp]$

        \STATE Construct extended graph with weights as in Sec.~\ref{sec:moving-qspace}

        \STATE  $\trajectorywpspr\define$   shortest\_path($(0,\confpt_0)$, $\enodeset[\numwpue-1]$)

        \STATE \textbf{return} times and waypoints $\{(\indwp\sampint,\confpt[\indwp])\}_\indwp$.

    \end{algorithmic}
\end{algorithm}

    \begin{changesv2}
        \blt[setting $\sampint$] Since $\sampint$ affects the optimality of the obtained trajectory, it is important to set it as low as allowed by the available computational resources.
        \begin{bullets}%
            \blt[complexity]To this end, note that the complexity of the algorithm can be expressed as a function $\complexityfun(\numwpue,\numgridpt)$. It can be shown that $\complexityfun$ is
            $\complexity(\numwpue\numgridpt\log(\numwpue\numgridpt))$ (cf. Sec.~\ref{sec:complexity}) in order notation, but the exact expression,
            which depends on the implementation, is required here.
            \blt[in terms of $\sampint$]The idea is to express $\numwpue$ and $\numgridpt$ in terms  of $\sampint$ and then solve for $\sampint$.
            \begin{bullets}%
                \blt Specifically, start by noting that $\numwpue=\horizon/\sampint$.
                \blt On the other hand,  note  that  $\sampint$ should equal  the time a UAV needs to fly to the farthest adjacent grid point at full speed. If, for simplicity, 
                $\oobregion$ is a cube of side length $\arealength$  and $\spacingscalar\define\spacingx=\spacingy=\spacingz$, then  $\sampint=\sqrt{3}\spacingscalar/\maxuavspeed =\sqrt{3}\arealength/(\maxuavspeed\sqrt[3]{\numgridpt})$. This implies that $\numgridpt=[\sqrt{3}\arealength/(\sampint\maxuavspeed)]^3$.
            \end{bullets}%
            \blt As a result, the complexity can be written as
            $\complexityfun(\horizon/\sampint,[\sqrt{3}\arealength/(\sampint\maxuavspeed)]^3)$. Equating this expression  to the available resources and solving for $\sampint$ yields  the desired $\sampint$  (and also $\spacingscalar$ as $\spacingscalar=\maxuavspeed\sampint/\sqrt{3}$).
        \end{bullets}%
    \end{changesv2}
\end{bullets}%

\section{Extensions}
\label{sec:extensions}

\subsection{Extension to more than two UAVs}
\label{sec:multi-uavs}

\begin{changesv2}%
    This section extends Algorithms~\ref{algo:prfi-static} and \ref{algo:prfi-moving} to the case of more than two UAVs.
    \begin{bullets}%
        \blt[notation] To this end, let $\rateset(\loc,\rate_1,\rate_2,\ldots,\rate_\induav)\define \{\loc'\in\grid|\exists\bar{\loc}\in\rateset(\loc,\rate_1,\rate_2,\ldots,\rate_{\induav-1}):\capacity(\bar{\loc},\loc')\geq \rate_\induav\}$, $\induav>2$.
        \blt[prfi for static ues]
        \begin{bullets}%
            \blt[algorithm] PRFI for static UEs is extended to $\numuav$ UAVs, $\numuav>2$, as follows.
            \begin{bullets}%
                \blt[K]The tentative path of UAV-$\numuav$ is planned by Algorithm~\ref{algo:trajtwo} after replacing $\nodeset_{2}$ and $\destsetut$ in Steps \ref{step:static-ue:uav2-candidates} and \ref{step:static-ue:uav2-destinations} with $\nodeset_{\numuav}\define\rateset(\locbs,\numuav\minuavrate,(\numuav-1)\minuavrate,\ldots,2\minuavrate,\minuavrate)$ and $\destset_{\numuav}\define\rateset(\locbs,\numuav\minuavrate+\minuerate,(\numuav-1)\minuavrate+\minuerate,\ldots,2\minuavrate+\minuerate,\minuavrate+\minuerate)\cap\rateset(\locue,\minuerate)$.
                \blt[k]The tentative path of UAV-$\induav$, $\induav=\numuav-1, K-2,\ldots,1$, is planned by Algorithm~\ref{algo:trajone} after replacing $\nodesetuo[\indwp]$ and $\destsetuo[\indwp]$ in Steps~\ref{step:static-ue:uav1-candidates} and \ref{step:static-ue:uav1-destinations} with $\nodeset_{k}[\indwp]=\rateset(\locbs,\numuav\minuavrate,(\numuav-1)\minuavrate,\ldots,(\numuav-\induav+1)\minuavrate)\cap\rateset(\exloc_{k+1}[\indwp],(\numuav-\induav)\minuavrate)$ and $\destset_{k}[\indwp]=\rateset(\locbs,\numuav\minuavrate+\minuerate,(\numuav-1)\minuavrate+\minuerate,\ldots,(\numuav-\induav+1)\minuavrate+\minuerate)\cap\rateset(\exloc_{k+1}[\indwp],(\numuav-\induav)\minuavrate+\minuerate)$. In Step \ref{step:static-ue:lift}, the path of UAV-$\induav'$ needs to be lifted for all $\induav'>\induav$. The remaining steps can be readily extended.
            \end{bullets}%
            By planning the tentative path for the $\numuav$ UAVs in this way, Algorithm~\ref{algo:prfi-static} is  extended to find the approximately optimal path.
            %
            %
            \begin{extendedonly}
                \blt[observe] In the extension above, Algorithm~\ref{algo:trajone} adopts the waiting approach (described in Sec.~\ref{sec:static:pathuav1}) to plan the tentative path for UAV-$\induav$. This approach may require UAV-$(\induav+1)$ to wait at some time steps. This then requires UAV-$(\induav+2)$ to wait for UAV-$(\induav+1)$, UAV-$(\induav+3)$ to wait for UAV-$(\induav+2)$, and so on.
            \end{extendedonly}
        \end{bullets}

        \blt[prfi for moving ues] Similarly, PRFI for moving UEs can be extended to $\numuav>2$ by successively planning the tentative path for UAV-$\numuav$, then UAV-$(\numuav-1)$ and so on.
        \begin{bullets}%
            \blt[algorithm] Particularly,
            \begin{bullets}
                \blt[K]the tentative path of UAV-$\numuav$ is planned by Algorithm~\ref{algo:pathtwo-movingue} after replacing $\nodeset_{2}[\indwp]$ and $\destsetut[\indwp]$ in Steps \ref{step:moving-ue:uav2-candidates} and \ref{step:moving-ue:uav2-destinations} with $\nodeset_{\numuav}[\indwp]\define\rateset(\locbs,\numuav\minuavrate,(\numuav-1)\minuavrate,\ldots,2\minuavrate,\minuavrate)$ and $\destset_{\numuav}[\indwp]\define\rateset(\locbs,\numuav\minuavrate+\minuerate,(\numuav-1)\minuavrate+\minuerate,\ldots,2\minuavrate+\minuerate,\minuavrate+\minuerate)\cap\rateset(\locue[\indwp],\minuerate)$.
                \blt[k]The tentative path of UAV-$k$, $k=K-1, K-2,\ldots,1$, is planned by Algorithm~\ref{algo:pathone-movingue} after replacing $\nodesetuo[\indwp]$ and $\destsetuo[\indwp]$ in Steps~\ref{step:moving-ue:uav1-candidates} and \ref{step:moving-ue:uav1-destinations} with $\nodeset_{k}[\indwp]=\rateset(\locbs,\numuav\minuavrate,(\numuav-1)\minuavrate,\ldots,(\numuav-\induav+1)\minuavrate)\cap\rateset(\exloc_{k+1}[\indwp],(\numuav-\induav)\minuavrate)$ and $\destset_{k}[\indwp]=\rateset(\locbs,\numuav\minuavrate+\minuerate,(\numuav-1)\minuavrate+\minuerate,\ldots,(\numuav-\induav+1)\minuavrate+\minuerate)\cap\rateset(\exloc_{k+1}[\indwp],(\numuav-\induav)\minuavrate+\minuerate)$. In Step \ref{step:moving-ue:lift}, the path of UAV-$\induav'$ needs to be lifted for $\induav'>\induav$. The remaining steps can be readily extended.
            \end{bullets}%
            By planning the tentative paths of the $\numuav$ UAVs, Algorithm~\ref{algo:prfi-moving} is extended  to find the approximately optimal path.
            %
        \end{bullets}%
        %
    \end{bullets}%
\end{changesv2}

\subsection{Extension to multiple UEs}
\label{sec:multi-ues}
\begin{changesv2}
    \begin{bullets}%
        \blt[intro] This section extends Algorithms~\ref{algo:prfi-static} and \ref{algo:prfi-moving} to the case of multiple UEs. Extending the algorithms in Sec.~\ref{sec:multi-uavs} follows along the same lines.
        \blt[notation]
        \begin{bullets}
            \blt[num ues] Let $\numues$ and $\{1,2,\ldots,\numues\}$ respectively denote the number and the set of indices of the UEs.
        \end{bullets}%
        
        \blt[prfi for static ues] In the case of static UEs,
        \begin{bullets}%
            \blt[loc ues] the location of the $\indue$-th UE is denoted by $\locue^{(\indue)}$.
            \blt[rate] Given a configuration point $\confpt$, the achievable rate of the $\indue$-th UE is $\uerate^{(\indue)}(\confpt)\define\uerate(\confpt,\locue^{(\indue)})$.
            \blt[rate] For simplicity, assume that each UE requires the same rate $\minuerate$.
            \blt[objective] Since it may not be possible to serve all UEs at the same time, a reasonable objective would be to serve as many UEs as possible.
            \begin{bullets}%
                \blt[grid pts to serve] Formally, the index set $\setueinds$ of served UEs can be taken to be the largest subset of $\{1,2,\ldots,\numues\}$ such that the destination set $\setgridptsmultiues\define\cap_{\indue\in\setueinds}\rateset(\locue^{(\indue)},\minuerate)$ is not empty.
            \end{bullets}%
            \blt[algorithm] Then, Algorithms~\ref{algo:trajtwo} and \ref{algo:trajone} are modified as follows:
            \begin{bullets}
                \blt[input] $\{\locue^{(\indue)}\}_{\indue\in\setgridptsmultiues}$ will be one of the inputs.
                \blt[uav 2] $\destsetut$ in Step~\ref{step:static-ue:uav2-destinations} in Algorithm~\ref{algo:trajtwo} is replaced with $\destsetut=\rateset(\locbs,2\minuavrate+|\setueinds|\minuerate,\minuavrate+|\setueinds|\minuerate)\cap\setgridptsmultiues$.
                \blt[uav 1] $\destsetuo$ in Step~\ref{step:static-ue:uav1-destinations} of Algorithm~\ref{algo:trajone} is then replaced with $\destset_1=\rateset(\locbs, 2\minuavrate + |\setueinds|\minuerate)\cap \rateset(\exloc_2[\numwp_\indlift-1],\minuavrate + |\setueinds|\minuerate)$.
            \end{bullets}
            Finally, Algorithm~\ref{algo:prfi-static} is used to find the approximately optimal path by replacing Step~\ref{step:static-shortest-path} with
            $\trajectorywpspr:=$ shortest\_path($\confpt_0$, $\{\confpt \in \prnodeset:\uerate^{(\indue)}(\confpt)\geq\minuerate,\forall\indue\in\setueinds\}$).
        \end{bullets}%

        \blt[prfi for moving ues] In the case of moving UEs,
        \begin{bullets}%
            \blt[trajectory]  assume without loss of generality that all UEs follow paths of length  $\numwpue$. The trajectory of the $\indue$-th UE can then be denoted by $\trajectoryue^{(\indue)}\define\{\locue^{(\indue)}[0],\locue^{(\indue)}[1],\ldots,\locue^{(\indue)}[\numwpue-1]\}$, where $\locue^{(\indue)}[\indwp]$ is the location of the $\indue$-th UE at time step $\indwp$.
            \blt[objective] Since it may not be possible to provide connectivity to all UEs at every time step, a reasonable objective would be to serve as many UEs as possible.
            \begin{bullets}%
                \blt[grid pts to serve]Formally, the index set $\setueinds[\indwp]$ of served UEs at time step $\indwp$ can be taken to be the largest subset of $\{1,2,\ldots,\numues\}$ such that $\setgridptsmultiues[\indwp]\define\cap_{\indue\in\setueinds[\indwp]}\rateset(\locue^{(\indue)}[\indwp],\minuerate)\neq\emptyset$.
            \end{bullets}%
            \blt[algorithm] Then, Algorithms~\ref{algo:pathtwo-movingue} and \ref{algo:pathone-movingue} are modified as follows.
            \begin{bullets}%
                \blt[input] $\{\trajectoryue^{(\indue)}\}_{\indue=1}^{\numues}$ will be one of the inputs.
                \blt[uav 2] $\destsetut[\indwp]$ in Step~\ref{step:moving-ue:uav2-destinations} of Algorithm~\ref{algo:pathtwo-movingue} is replaced with $\destsetut[\indwp]=\rateset(\locbs,2\minuavrate+|\setueinds[\indwp]|\minuerate,\minuavrate+|\setueinds[\indwp]|\minuerate)\cap\setgridptsmultiues[\indwp]$.
                \blt[uav 1] $\destsetuo[\indwp]$ in Step~\ref{step:moving-ue:uav1-destinations} of Algorithm~\ref{algo:pathone-movingue} is  replaced with $\destset_1[\indwp]=\rateset(\locbs, 2\minuavrate + |\setueinds[\indwp]|\minuerate)\cap \rateset(\exloc_2[\indwp],\minuavrate + |\setueinds[\indwp]|\minuerate)$.
            \end{bullets}%
            These modifications extend Algorithm~\ref{algo:prfi-moving} to the case $\numues>1$.
        \end{bullets}%
    \end{bullets}%
\end{changesv2}%

\subsection{Complexity analysis}
\label{sec:complexity}
\begin{changesv2}
        
        \begin{bullets}%
            \blt[params] Assume for simplicity that
            $\numues=1$,             
            $\oobregion$ is a cube of side $\arealength$, and  $\spacingx=\spacingy=\spacingz=\spacingscalar>0$.
            \begin{bullets}%
                \blt The number of grid points is then roughly $\numgridpt=\arealength^3/\spacingscalar^3$, which implies that $\spacingscalar=\arealength/\sqrt[3]{\numgridpt}$.
            \end{bullets}%
            %
            \blt[rate sets]Overall, the proposed algorithms utilize  $\nodeset_{\induav}[\indwp]$ and $\destset_{\induav}[\indwp]$; cf. Sec.~\ref{sec:multi-uavs}. This requires   $\nodesetpre_\induav\define\rateset(\locbs,\numuav\minuavrate,(\numuav-1)\minuavrate,\ldots,(\numuav-\induav+1)\minuavrate)$ and $\destsetpre_\induav\define\rateset(\locbs,\numuav\minuavrate+\minuerate,(\numuav-1)\minuavrate+\minuerate,\ldots,(\numuav-\induav+1)\minuavrate+\minuerate),\forall\induav=1,\ldots,\numuav$.
            \begin{bullets}%
                \blt[capacity map] Obtaining these sets does not count towards complexity as it only requires comparisons -- the capacity map $\capacity$  already provides $\capacity(\loc,\loc')$ for all $\loc,\loc'\in\grid$.
            \end{bullets}%
            \blt[dijkstra complexity] Therefore, the complexity of the proposed algorithms mainly stems from the adopted shortest path algorithm. In the case of Dijkstra's, its standard implementation has  complexity  $\complexity\left((V+E)\log V\right)$ \cite{mehlhorn2008algorithm}, where $V$ and $E$ are the numbers of vertices and edges of the considered graph. 
            \blt[complexity of prop. algos]Based on this, the complexity of the proposed algorithms is  obtained next.
            
            \blt[static] Start by considering the case of a static UE.
            \begin{bullets}%
                \blt[prfi]
                \begin{bullets}%
                    \blt[tentative paths]The first step is to obtain the  complexity of computing the tentative path. 
                        Suppose initially that  $\numuav=2$.
                    \begin{bullets}%
                        \blt[uav 2]  Planning the path of UAV-2 involves  $\numgridpt$ nodes and $26\numgridpt/2$ edges, which requires $\complexity((\numgridpt+13\numgridpt)\log(\numgridpt))=\complexity(\numgridpt\log(\numgridpt))$ operations.
                        \blt[uav 1] If this path has $\numwpten$ nodes, planning the path of UAV-1 involves  an extended graph with $\numwpten\numgridpt$ nodes and $27\numwpten\numgridpt$ edges. This requires $\complexity((\numwpten\numgridpt+27\numwpten\numgridpt)\log(\numwpten\numgridpt))=\complexity(\numwpten\numgridpt\log(\numwpten\numgridpt))$ operations.
                        \blt[more uavs]Proceeding along these lines for $\numuav>2$ yields
                        \begin{align}
                            \label{eq:complexitystaticue}
                            \complexity(\numuav\numwpten\numgridpt\log(\numwpten\numgridpt)).
                        \end{align}
                    \end{bullets}%
                    \blt[combined path] Planning the combined path involves $\numconfpt$ nodes and $\numconfpt\maxnumneighbors/2$ edges, where $\maxnumneighbors$ is the number of neighbors of a node. Since these are constants, the contribution of this step to the overall complexity can be neglected. 
                    \blt[summary]Thus, in short, the complexity of PRFI for static UEs is given by \eqref{eq:complexitystaticue}. 

                \end{bullets}%
                \blt[comparison]
                \begin{bullets}%
                    %
                    \blt[other works] This  complexity  can be related to existing schemes. To this end, note that:
                    \begin{itemize}
                        \item the complexity of \cite{zhang2022cooperative} and \cite{liu2021relaying} is 
                              $\complexity(\numuav\numwp^{3.5}\log(1/\epsilon))$, where $\epsilon=10^{-3}$ is the solution accuracy.
                        \item{}\cite{zhang2018multi} has  complexity  $\complexity(\numlift(\numuav\numwp)^{3.5})$, where $\numlift$ is the number of iterations.
                        \item  the complexity of \cite{zeng2016relaying} is only said to be polynomial w.r.t $\numwp$, but it is likely larger than \eqref{eq:complexitystaticue}.
                        \item obtaining an exact  solution in \cite{ghazzai2018dual, lee2022trajectory} is NP-hard (although polynomial time approximations can be used). 
                    \end{itemize}
                    \blt[conclusion] Thus, the complexity of existing schemes in terms  of $\numwp$ typically grows as $\complexity(\numwp^{3.5})$, much faster than with the proposed algorithm, for which complexity grows as
                    $\complexity(\numwp\log(\numwp))$.%
                \end{bullets}%
            \end{bullets}%
            
            \blt[moving]Next, consider the case of a moving UE. 
            \begin{bullets}%
                \blt[prfi]
                \begin{bullets}%
                    \blt[tentative paths]First, the computation of the tentative path will be considered. 
                    \begin{bullets}%
                        \blt[last uav] For planning the path of UAV-$\numuav$,
                        \begin{bullets}%
                            \blt[node set]one needs $\nodeset_{\numuav}[\indwp]=\nodesetpre_{\numuav}$, which is already known.
                            \blt[dest set]
                            \begin{bullets}%
                                \blt[one time step]One also needs  $\destset_{\numuav}[\indwp]=\destsetpre_\numuav\cap\rateset(\locue[\indwp],\minuerate)$, 
                                which in turn requires computing $\rateset(\locue[\indwp],\minuerate)$.            
                                Since this involves  $\numgridpt$ operations,
                                %
                                \blt[all time steps] finding $\destset_{\numuav}[\indwp]~\forall\indwp$ has  complexity  $\numwpue\numgridpt$.
                            \end{bullets}%
                            \blt[planning a path]Having obtained  $\nodeset_\numuav[\indwp]$ and $\destset_\numuav[\indwp]$, the next step is to plan the path of UAV-K through an extended graph with $\numwpue\numgridpt$ nodes and $13\numwpue\numgridpt$ edges, which has complexity 
                            $\complexity\left(\left(\numwpue\numgridpt + 13\numwpue\numgridpt\right)\log\left(\numwpue\numgridpt\right)\right)$
                            $=\complexity\left(\numwpue\numgridpt\log\left(\numwpue\numgridpt\right)\right)$.
                        \end{bullets}%
                        \blt[uav-k] When planning the  paths of the remaining UAVs,
                        \begin{bullets}%
                            \blt[all time steps] one must compute $\nodeset_{\induav}[\indwp]$ and $\destset_{\induav}[\indwp]$, $\forall\indwp$, which only requires comparisons.
                            %
                            \blt[planning a path]After that, planning the  path of UAV-$\induav$ has the same complexity as planning the path of UAV-$\numuav$, i.e. $\complexity\left(\numwpue\numgridpt\log\left(\numwpue\numgridpt\right)\right)$.
                        \end{bullets}%
                        \blt[sum up]To sum up, the complexity of planning the tentative path is $\complexity(\numwpue\numgridpt+\numuav\numwpue\numgridpt\log\left(\numwpue\numgridpt\right))=\complexity(\numuav\numwpue\numgridpt\log\left(\numwpue\numgridpt\right))$.
                    \end{bullets}%
                    \blt[combined path] Planning the combined path involves $\numconfptperslot\define\lfloor \numconfpt /\numwpue \rfloor+1$ nodes and $\numconfptperslot\numconfptperslot=\numconfptperslot^2$ edges per time step, which results in a complexity of
                    $\complexity((\numconfptperslot\numwpue+\numconfptperslot^2(\numwpue-1))\log(\numconfptperslot\numwpue))$ $=\complexity(\numwpue\log(\numwpue))$.
                    \blt[total complexity] Thus, the total complexity is
                    \begin{align}
                        \label{eq:compmovinguenuengrid}
                        \complexity(\numuav\numwpue\numgridpt\log(\numwpue\numgridpt)).
                    \end{align}%
                \end{bullets}%
                \blt[other works]Unlike the static UE case, the complexity in
            \eqref{eq:compmovinguenuengrid} cannot be compared with the
            literature since there is only one scheme for serving moving
            users~\cite{yanmaz2022positioning} and it has no  complexity
            analysis. \end{bullets}%
        \end{bullets}%
        
\end{changesv2}

\section{Numerical Experiments}
\label{sec:experiments}
\cmt{overview} This section presents numerical results that validate and assess the performance of the proposed 
algorithms. The developed simulator, the simulation code, and some videos  can be found
at \url{https://github.com/uiano/pr_for_relay_path_planning}.

\subsection{Simulation Setup}
Unless stated otherwise, the experiments adopt the parameters in Table \ref{tab:simparams}.
\begin{bullets}%
    \blt[simulation setup]
    \begin{bullets}%
        \blt[channel]
        \arevtwo{
            Also,  $\capacity$ is obtained with the tomographic
            channel model (cf. Sec.~\ref{sec:capmap}) with an
            absorption of 1 dB/m inside the buildings and 0 dB/m outside.
        }

        \begin{table}[!t]
            \begin{changes}
                \begin{center}
                    \caption{Simulation parameters unless otherwise stated.}
                    \label{tab:simparams}
                    \begin{tabular}{ |>{\centering}p{1cm}|p{4.4cm}|>{\centering\arraybackslash}p{2cm}| }
                        \hline
                        \textbf{Notation}     & \textbf{Physical meaning}                                     & \multirow{1}{*}{\textbf{Simulation value}} \\
                        \hline
                        $\region\in\rfield^3$ & Considered region [m$\times$m$\times$m]                   & $500\times500\times100$   \\
                        \hline
                        $\grid$               & Flight grid $[\numgridptx\times\numgridpty\times\numgridptz]$ & $12\times12\times 8$      \\
                        \hline
                                              & Minimum flight height                                         & 12.5 m                    \\
                        \hline
                        $\heighttop$          & Maximum flight height                                         & 87.5 m                    \\
                        \hline
                        $\locbs$              & Location of the BS                                            & $[20,470,0]\transpose$    \\
                        \hline
                                              & Number of buildings                                           & 25                        \\
                        \hline
                        $\numuav$             & Number of UAVs                                                & 2                         \\
                        \hline
                        $\maxuavspeed$        & Maximum UAV speed                                     & 7 m/s                     \\
                        \hline
                        $\minuavrate$         & Minimum UAV rate                                      & 200 kbps                  \\
                        \hline
                                              & Carrier frequency                                             & 6 GHz                     \\
                        \hline
                        $\bandwidth$          & Bandwidth                                                     & 20 MHz                    \\
                        \hline
                        $\txpower$            & Transmit power                                                & 17 dBm                    \\
                        \hline
                        $\txgain$, $\rxgain$  & Tx./Rx. Antenna gain                                          & 12 dBi                    \\
                        \hline
                        $\noisepower$         & Noise power                                                   & -97 dBm                   \\
                        \hline
                        $\numconfpt$          & No. of configuration points in PRFI                        & 2000                      \\
                        \hline
                                              & No. of neighbors in PRFI                                   & 100                       \\
                        \hline
                                              & No. of Monte Carlo (MC) realizations                       & 400                       \\
                        \hline
                    \end{tabular}
                \end{center}
            \end{changes}
        \end{table}
    \end{bullets}%
\end{bullets}%

\subsection{Static UE}
\label{sec:sim:static}
\begin{bullets}%
    \blt[Overview]This section studies the performance of the proposed PRFI algorithm for static UEs.
    \blt[simulation setup]
    \begin{bullets}%
        \blt[building height] Throughout this section, all buildings have a height of 40 m.
        \blt[ue loc]
        \begin{bullets}%
            \blt[dist bs ue] For generating $\locue$ at each Monte Carlo (MC) realization, the distance $\distbsue =\|\locue-\locbs\|$ is first generated uniformly at random in the interval $[\distbsuemin,\distbsuemax]$, where, unless otherwise stated, $\distbsuemin=50$ m and $\distbsuemax=650$ m.
            \blt[position]Subsequently, $\locue$ is drawn uniformly at random among the points that (i) are outside the buildings, (ii) are at a distance $\distbsue$ from $\locbs$, and (iii) satisfy $\capacity(\locbs,\locue)\leq\minuerate$.
        \end{bullets}%
    \end{bullets}%

    \begin{changesv2}
        \blt[benchmarks] As benchmarks, this section adapts five state-of-the-art algorithms. The optimization problems that they rely on can be solved for the setup at hand, resulting in the following trajectories:
        \begin{itemize}%
            \item Zeng et al. \cite{zeng2016relaying}:
                  a UAV takes off vertically at the BS until reaching
                  $\ptbsabove\define[\locxbs,\locybs,
                          \heighttop]\transpose$. It then flies horizontally to
                  $\ptmid\define(\ptbsabove+\ptueabove)/2$, where $\ptueabove\define[\locxue,\locyue,\heighttop]\transpose$. This would coincide with the trajectory obtained via \cite{zhang2022cooperative} and \cite{liu2021relaying}.
            \item Ghazzai et al. \cite{ghazzai2018dual}: a UAV takes off at the BS to
                  $\ptbsabove$ and flies horizontally to the grid point
                  that maximizes the UE rate predicted by the channel model proposed in
                  \cite{alhourani2014lap}.
            \item Lee et al. \cite{lee2022trajectory}: two UAVs lift off at the BS
                  to $\ptbsabove$. Then, UAV-1 flies horizontally to $\ptmid$. Meanwhile, UAV-2 flies horizontally to $\ptueabove$.
            \item Zhang et al. \cite{zhang2018multi}:  two UAVs lift off at the BS
                  to $\ptbsabove$. Then, UAV-1 stays at $\ptbsabove$. Meanwhile,  UAV-2 flies horizontally to the grid point that maximizes the UE rate.
            \item Yanmaz et al. \cite{yanmaz2022positioning}: two UAVs lift off at the BS to $\ptbsabove$. Then, UAV-1 stays at $\ptbsabove$. Meanwhile,  UAV-2 flies horizontally to $\ptueabove$.
                  %
        \end{itemize}%
    \end{changesv2}
    \arevtwo{These algorithms are compared with PRFI (Tentative) and PRFI, which respectively correspond to Algorithms~\ref{algo:trajone} and~\ref{algo:prfi-static}}.

    \blt[experiments]
    \begin{bullets}%
        \blt[rate vs time]
        \begin{bullets}%
            \blt[figure] Fig.~\ref{fig:static-meanuerate}  compares the mean instantaneous rate $\expected[\uerate(\confpt(t))]$  of the considered algorithms.
            %
            \begin{figure}%
                \centering
                \includegraphics[width=\linewidth]{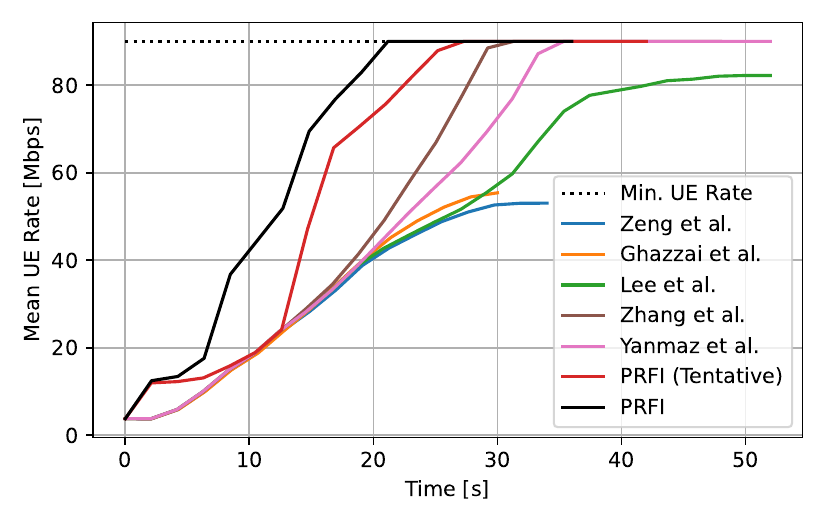}
                \caption{Expected UE rate $\expected[\uerate(\confpt(\timeInstant))]$ vs. $\timeInstant$. The proposed algorithm is the first to attain the target rate $\minuerate$ ($\minuerate = 90$ Mbps, $[\distbsuemin,\distbsuemax]=[150,250]$ m).}
                \label{fig:static-meanuerate}
            \end{figure}%
            \blt[observations]%
            \begin{bullets}%
                \blt[init rate]Since all UAVs start from the BS, the initial rate is the same for all algorithms.
                %
                \blt[obtain min rate]%
                \begin{bullets}%
                    \blt[benchmarks] Although \arevtwo{Zeng et al., Ghazzai et al., and Lee et al.} do not reach the target rate, \arevtwo{Zhang et al. and Yanmaz et al.} do succeed. That \arevtwo{Yanmaz et al.} meets the rate is guaranteed by  \thref{prop:feasibleexists}.
                    %
                    \blt[tentative faster than benchmark 3] PRFI (Tentative), which corresponds to the trajectory produced by Algorithm~\ref{algo:trajone},
                    is already faster than \arevtwo{Zhang et al. and Yanmaz et al.}, which  corroborates the efficacy of the proposed initialization.
                    \blt[faster than tentative] PRFI, which returns the result of applying PR to the tentative path, is \arevtwo{significantly} faster than PRFI (Tentative) \arevtwo{and all benchmarks}. This validates the adoption  of  PR.
                \end{bullets}%
                %
                %
                %
            \end{bullets}%
        \end{bullets}%

        \blt[connection time vs]
        \begin{bullets}
            \blt[min ue rate]
            \begin{bullets}%
                \blt[description] The second experiment studies the influence of $\minuerate$ on the expectation of the connection
                time, which is the cost in~\eqref{eq:timeconnectobj}. To
                this end, Fig.~\ref{fig:static-vs-minrate} plots
                the mean connection time
                $\meanconn\define\expected[\ttc(\trajectory)~\big|~\ttc(\trajectory)<\infty]$ and
                the probability of failure $\probfail\define\prob[\ttc(\trajectory)=\infty]$ vs. $\minuerate$. \arevtwo{In other words, $\meanconn$ is the average of the connection time in the successful MC realizations ($\exists t: \uerate(\confpt(t))\geq \minuerate$) whereas $\probfail$ quantifies the fraction of unsuccessful MC realizations.}
                %
                \begin{figure}
                    \centering
                    \includegraphics[width=\linewidth]{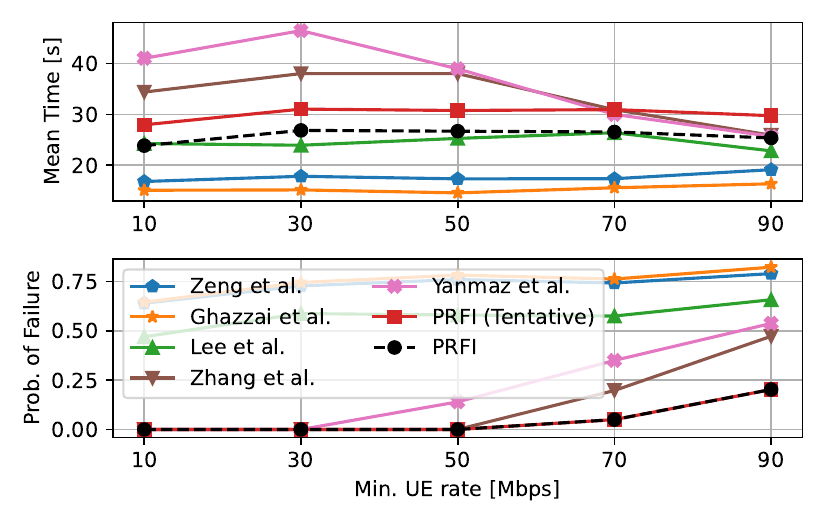}
                    \caption{$\meanconn$ and $\probfail$ vs. $\minuerate$. \arevtwo{Some benchmarks achieve a smaller mean connection time because they only succeed in the easiest MC realizations. }
                    }
                    \label{fig:static-vs-minrate}
                \end{figure}
                \blt[metrics]

                \blt[observation]
                \begin{bullets}%
                    \blt[zeng, ghazzai, lee] \arevtwo{Observe that Zeng et al., Ghazzai et al., and Lee et al.} have a lower $\meanconn$ than PRFI. However, looking at their $\probfail$ reveals that this is because they only succeed in a small fraction of the MC realizations.
                    %
                    %
                    \blt[zhang and yanmaz]\arevtwo{Zhang et al. and Yanmaz et al. are} outperformed by PRFI in both metrics.
                    %
                    \blt[faster than tentative]\arevtwo{Note again that  PRFI is considerably faster than PRFI (Tentative), which again corroborates the efficacy of the proposed approach.}
                \end{bullets}%
            \end{bullets}%

            \blt[distance]
            \begin{bullets}%
                \blt[description] The next experiment studies the influence of the distance $\distbsue=\|\locbs-\locue\|$ on $\meanconn$ and $\probfail$.
                \begin{bullets}%
                    \blt[figure] To this end, Fig.~\ref{fig:static-vs-dist} plots
                    $\meanconn$ and $\probfail$ vs. $\distbsue$.
                    %
                    \begin{figure}
                        \centering
                        \includegraphics[width=\linewidth]{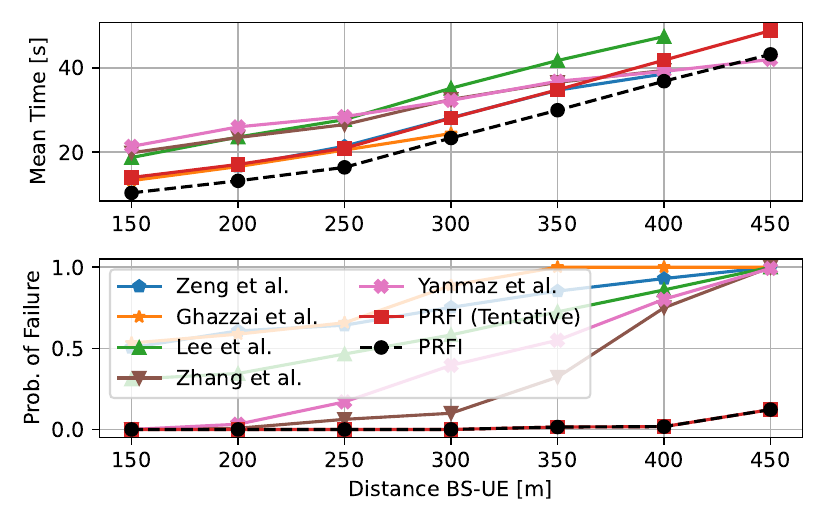}
                        \caption{$\meanconn$ and $\probfail$ vs. mean $\|\locue-\locbs\|$ ($\minuerate$ = 90 Mbps).}
                        \label{fig:static-vs-dist}
                    \end{figure}
                    \blt[generate dist bs ue] \arev{
                        In this figure, for a given value $\dist$ on the x-axis, $\distbsuemin=\dist-20$~m and $\distbsuemax=\dist+20$~m.}
                \end{bullets}
                \blt[observation]
                \begin{bullets}%
                    \blt[trends]
                    \begin{bullets}%
                        \blt[Overall increasing]\arevtwo{As expected, both $\meanconn$ and $\probfail$ increase with $\distbsue$.}
                        %
                    \end{bullets}%
                    %
                    \blt[prfi lowest]\arevtwo{
                        PRFI is seen to yield the best performance by far even for large values of~$\distbsue$.}
                \end{bullets}%
            \end{bullets}%
        \end{bullets}

        \begin{changesv2}

            \blt[optimality]
            \begin{bullets}%
                \blt[recall]The next experiment quantifies how often the PRFI path is guaranteed to be optimal.
                \blt[metric]The metric adopted to this end is the \emph{probability of guaranteed optimality}, which is the fraction of  MC realizations in which the optimality conditions   in Sec.~\ref{sec:fromwptotrajstaticue} hold.
                \blt[description]Most of the times, this probability is nearly 1. To reduce this metric to more interesting values, the problem is made more challenging by
                \begin{bullets}%
                    \blt[bs ue locs] placing the BS and  UE at opposite corners and
                    \blt[disable] disabling a randomly selected set of flight grid points.
                \end{bullets}%
                \blt[figure]Fig.~\ref{fig:static-optimality} plots the probability of guaranteed optimality and infeasibility vs. $\minuavrate$ for different fractions of disabled grid points. The probability of infeasibiligy reflects the fraction of MC realizations in which no feasible trajectory exists.
                %
                \begin{figure}%
                    \centering
                    \includegraphics[width=\linewidth]{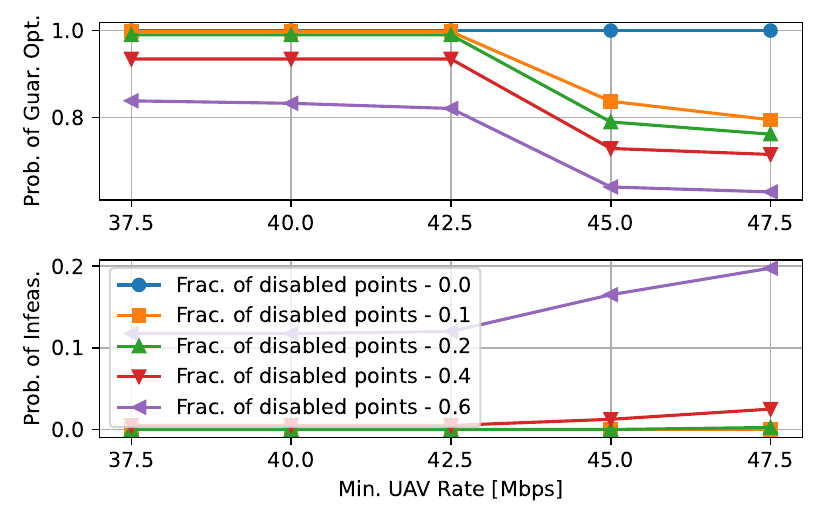}
                    \caption{
                        \arevtwo{Probability of guaranteed optimality and infeasibility vs. $\minuavrate$ (The height of each building is uniformly distributed between 20~m and 40~m, $\numgridptz=4$, $\minuerate=10$ Mbps)}.}
                    \label{fig:static-optimality}
                \end{figure}%
                \blt[observation]It is observed that PRFI yields an optimal path unless the fraction of disabled points and $\minuavrate$ are \emph{simultaneously} sufficiently large. It is also seen that, when an optimal path is not returned, it is because no feasible path even exists. 
            \end{bullets}%
        \end{changesv2}
    \end{bullets}%
\end{bullets}%

\subsection{Moving UE}
\begin{bullets}%
    \blt[Overview]This section studies the performance of  PRFI for moving UEs; cf. Sec.~\ref{sec:moving}.
    \blt[objective]Due to space restrictions, the focus will be on  maximizing the  transferred data. The UE
    \blt[simulation setup]%
    \begin{bullets}%
        \blt[ue path] follows a random trajectory
        of 300 seconds at a speed of 2 m/s whose  initial position is generated as the UE location in Sec.~\ref{sec:sim:static}.
        \blt[building height] Throughout, the height of each building  at each MC realization is uniformly distributed between 20~m and 75~m.

        \begin{extendedonly}
            An example of such a trajectory is shown in Fig. \ref{fig:trajectory-ue}.
            \begin{figure}
                \centering
                \includegraphics[width=.9\linewidth, trim={35cm 0cm 35cm 9cm}, clip]{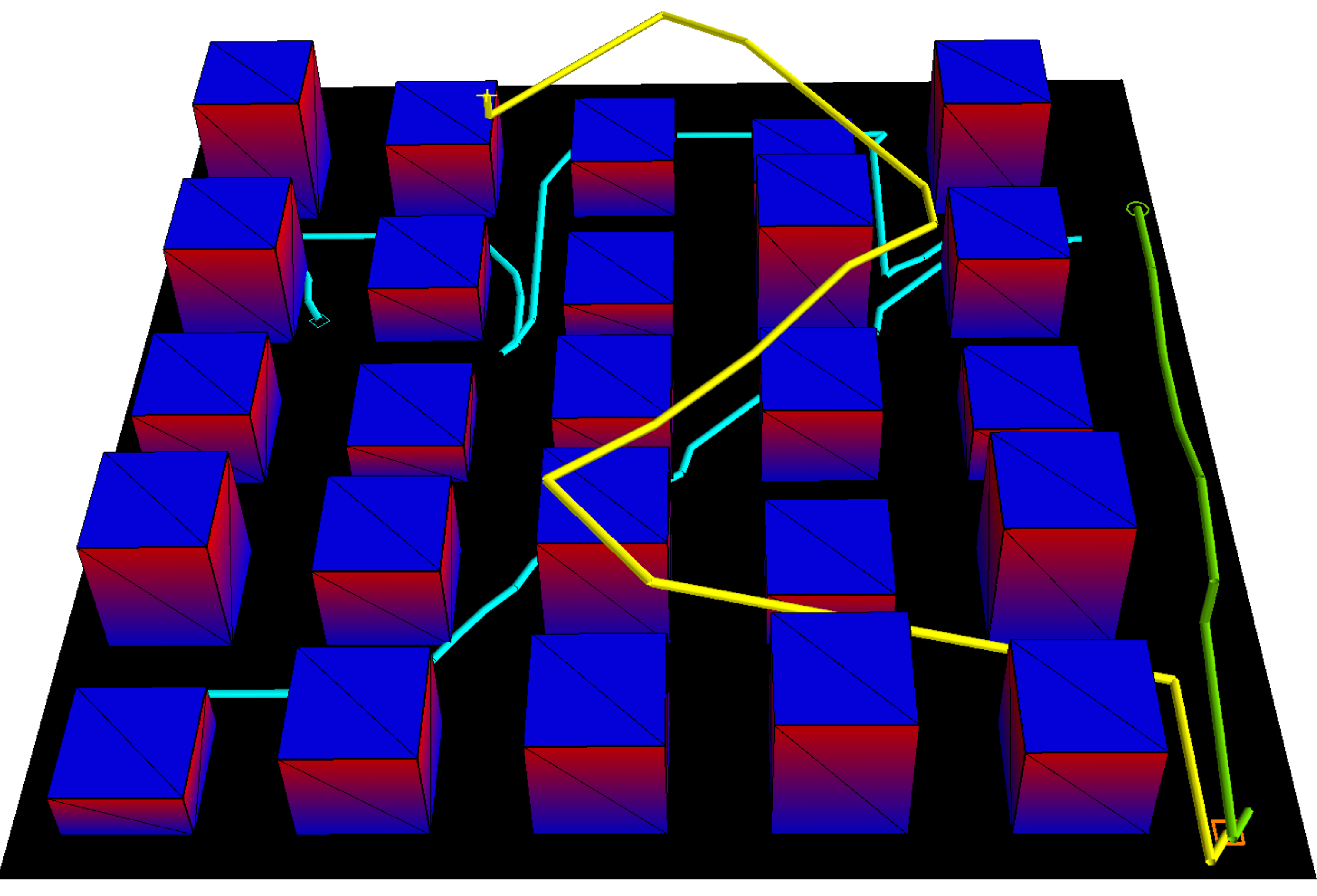}
                \captionof{figure}{An example of the UAV trajectories provided by Algorithm~\ref{algo:prfi-moving}.}
                \label{fig:trajectory-ue}
            \end{figure}
        \end{extendedonly}
    \end{bullets}%

    \begin{changesv2}
        \blt[benchmarks] For the setup at hand, the benchmarks in the previous section are adapted so that
        \begin{bullets}
            \blt[overall] the UAVs  fly on $\grid$, since this simplifies the computation of the transferred data.
            \begin{bullets}
                \blt[adjacent] At every time step, each UAV either stays at its current grid point or flies to an adjacent one.
                \blt[take off]In all algorithms, the UAVs  take off vertically at the BS until   $\ptbsabove$.
            \end{bullets}
            \blt[description] The rest of the trajectory is as follows:
            \begin{itemize}
                \item Zeng et al. \cite{zeng2016relaying}: at  time step $\indtimestep$, the UAV flies towards the adjacent grid point that is closest to $\ptmid[\indtimestep+1]\define(\ptbsabove+\ptueabove[\indtimestep+1])/2$, where $\ptueabove[\indtimestep+1]\define[\locxue[\indtimestep+1],\locyue[\indtimestep+1],\heighttop]\transpose$.
                \item Ghazzai et al. \cite{ghazzai2018dual}: at each time step, the UAV flies to the adjacent grid point that maximizes the UE rate predicted by  the channel model in \cite{alhourani2014lap}.
                \item Lee et al. \cite{lee2022trajectory}: at time step $\indtimestep$, UAV-1 flies to the adjacent grid point that is closest to $\ptmid[\indtimestep+1]$ and UAV-2 flies to the adjacent grid point that is closest to $\ptueabove[\indtimestep+1]$.
                \item Zhang et al. \cite{zhang2018multi}:  UAV-1 remains at $\ptbsabove$ and, at each time step, UAV-2 flies to the adjacent grid point that maximizes the UE rate in the next time step.
                \item Yanmaz et al. \cite{yanmaz2022positioning}: at time step $\indtimestep$, UAV-1 remains at $\ptbsabove$ and UAV-2 flies to the adjacent grid point that is closest to $\ptueabove[\indtimestep+1]$.
                \item Benchmark 6: cf. Appendix~\ref{sec:segment-benchmark}. The parameters of this benchmark are $\numlocstoreplan=15$  and $\numknownuelocs=17$.
            \end{itemize}
            These algorithms are compared with PRFI (Tentative) and PRFI, which respectively correspond to Algorithms~\ref{algo:pathone-movingue} and~\ref{algo:prfi-moving}.%
        \end{bullets}%
    \end{changesv2}

    \blt[experiments]
    \begin{bullets}%
        \blt[rate vs time]
        \begin{bullets}%
            \blt[figure] Fig.~\ref{fig:rate-vs-time-90} plots the MC average of the  UE  rate vs. $\timeInstant$.
            \begin{figure}%
                \centering
                \includegraphics[width=0.9\linewidth]{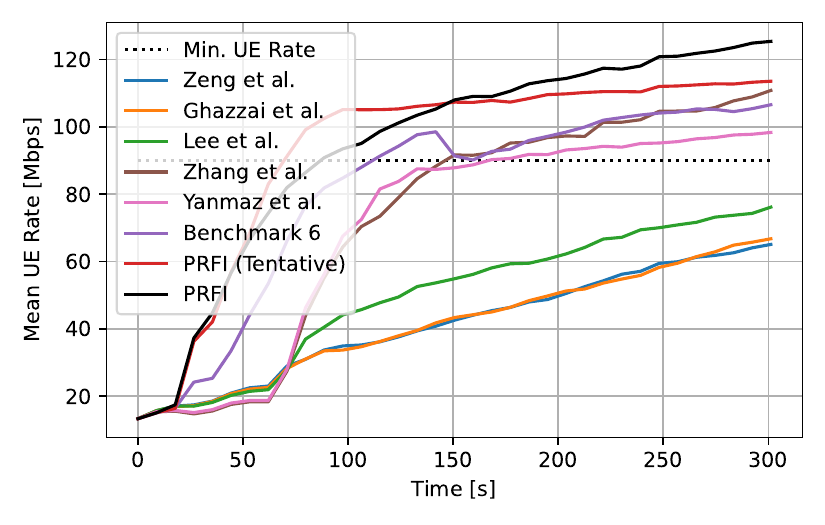}
                \caption{Mean UE rate vs. $\timeInstant$  ($\minuerate=90$ Mbps). }
                \label{fig:rate-vs-time-90}
            \end{figure}%
            \begin{changesv2}
                \blt[observation]%
                \begin{bullets}%
                    %
                    \blt[fastest minUErate] As expected,
                    \begin{bullets}%
                        \blt[tentative fastest] PRFI (Tentative) attains $\minuerate$ before PRFI but the latter yields a larger rate in the long term.
                        \blt[prfi]This is because PRFI (Tentative) aims at minimizing outage time whereas PRFI pursues the maximization of the total transfered data.%
                    \end{bullets}%
                    %
                    %
                \end{bullets}%
            \end{changesv2}
        \end{bullets}%

        \blt[transfered data vs time]
        \begin{bullets}%
            \blt[figure]Fig.~\ref{fig:data-vs-time} plots the total transferred data  vs. $\timeInstant$.
            \begin{figure}
                \centering
                \includegraphics[width=0.9\linewidth]{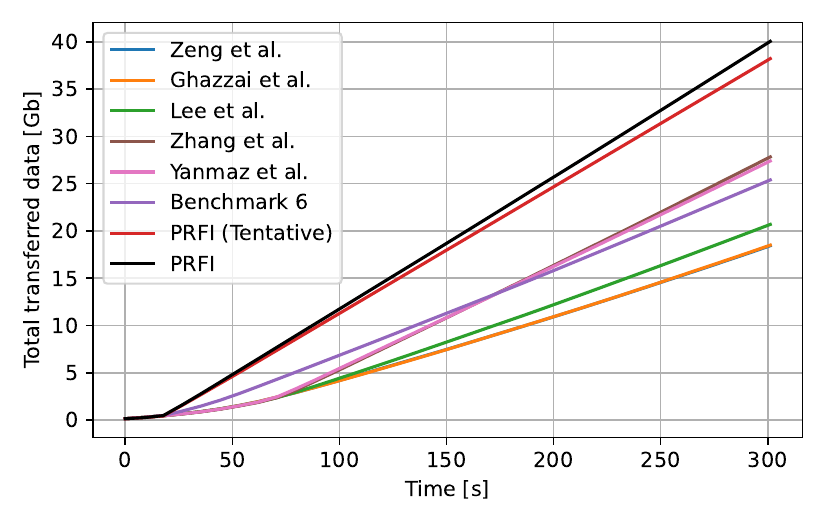}
                \caption{Total transferred data vs. $\timeInstant$ ($\distbsuemin=130$~m, $\distbsuemax=170$~m, $\minuerate=60$ Mbps).
                }
                \label{fig:data-vs-time}
            \end{figure}
            \begin{changesv2}
                \blt[observation]%
                \begin{bullets}%
                    \blt[data transferred]PRFI  offers the greatest slope, which implies that the  margin by which it outperforms its competitors increases as time progresses.
                    \blt[tentative]The difference between PRFI and PRFI (Tentative) is not very large, which suggests that the proposed initialization is nearly optimal and, therefore, one may consider bypassing Algorithm~\ref{algo:prfi-moving} to reduce computational cost.%
                \end{bullets}%
            \end{changesv2}
        \end{bullets}%

        \blt[prfi only]
        \begin{bullets}%
            \blt[description]To investigate how to set the parameters of PRFI,
            \begin{bullets}
                \blt[figure] Fig.~\ref{fig:vs-min-ue-rate-prfi-avg-rate} plots the average UE rate vs. $\minuerate$ for different numbers $\numconfpt$ of drawn CPs in the PR step.
                \begin{figure}[t]
                    \centering
                    \includegraphics[width=0.9\linewidth]{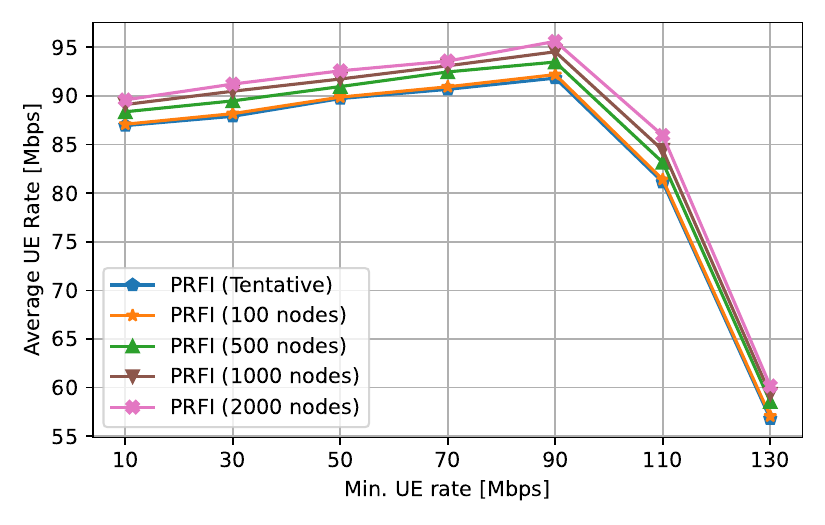}
                    \caption{Average UE rate vs. $\minuerate$ (800 MC realizations). }
                    \label{fig:vs-min-ue-rate-prfi-avg-rate}
                \end{figure}
                \blt[building absorption] To make differences between parameter values more conspicuous, an infinite  building absorption is adopted. %
            \end{bullets}%
            \blt[observation]
            \begin{bullets}%
                \blt[number of CPs]
                \begin{bullets}%
                    \blt[tradeoff av. rate-outage]As expected,  the greater $\numconfpt$, the higher the average UE rate,
                    %
                    \blt[diminishing returns]but  with diminishing returns: for example, the difference between $\numconfpt=100$ and $\numconfpt=1000$ is much more significant than between $\numconfpt=1000$ and $\numconfpt=2000$.%
                \end{bullets}%

                \blt[Influence of $\minuerate$]
                \arevtwo{Although $\minuerate$ does not affect the optimal path, it determines which suboptimal path PRFI returns.
                    To select it the best way, }
                \begin{bullets}%
                    \blt[low min rate]\arevtwo{observe from Fig.~\ref{fig:vs-min-ue-rate-prfi-avg-rate} that the average UE rate   increases slowly when $\minuerate$ is below a certain value}
                    \blt[high min rate]\arevtwo{and decreases quickly afterwards.} This suggests that it is preferable to select a reasonably small $\minuerate$ in practice.
                \end{bullets}%
            \end{bullets}%
        \end{bullets}%

        \blt[rate vs dist bs ue]
        \begin{bullets}%
            \blt[description] Fig.~\ref{fig:vs-dist-bs-ue} aims at analyzing  the influence of the initial $\distbsue$ on the mean UE rate and fraction of outage time.
            \blt[prob. outage]\arevtwo{The latter is defined as the fraction of time where $\uerate(\confpt(t),\locue(t))<\minuerate$.}
            \begin{bullets}%
                \blt[figure]
                \begin{figure}
                    \centering
                    \includegraphics[width=0.9\linewidth]{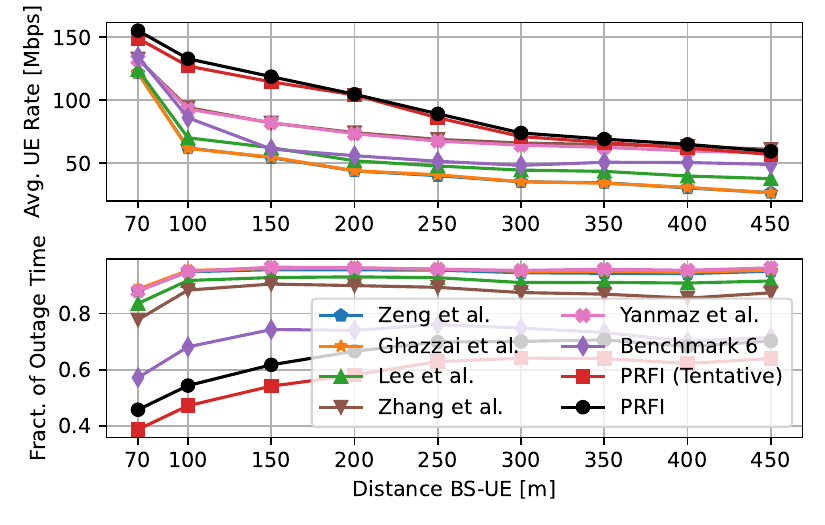}
                    \caption{Influence of the initial $\distbsue$ on performance ($800$ realizations, $\minuerate=110$~Mbps).}
                    \label{fig:vs-dist-bs-ue}
                \end{figure}%
                \blt[generate dist bs ue] For a given value $\dist$ on the x-axis, $\distbsuemin=\dist-20$~m and $\distbsuemax=\dist+20$~m.%
            \end{bullets}%
            \blt[observation]
            \begin{bullets}%
                %
                \blt[rel. benchmarks]~As expected, PRFI provides the highest average UE rate and lowest fraction of outage time.
                %
                \blt[rel. tentative]\arevtwo{Note, however, that PRFI (Tentative) attains a lower fraction of outage time than PRFI. This is because  the former precisely targets this objective; cf. Sec.~\ref{sec:moving-tentative}.
                }
                %
                %
                %
            \end{bullets}%
        \end{bullets}%

        \begin{extendedonly}
            \blt[vs building height, exp7052]
            \begin{bullets}%
                \blt[description] The final experiment studies the influence of the environment. To this end,
                \begin{bullets}%
                    \blt[figure] Fig.~\ref{fig:vs-building-height} depicts the average UE rate vs. the mean building height.
                    \blt[mean height]For each $h$ on the horizontal axis, the height of each building at each MC realization is uniformly distributed between $h - 20$~m and $h + 20$~m.
                    %
                    %
                    \begin{figure}
                        \centering
                        \includegraphics[width=.9\linewidth]{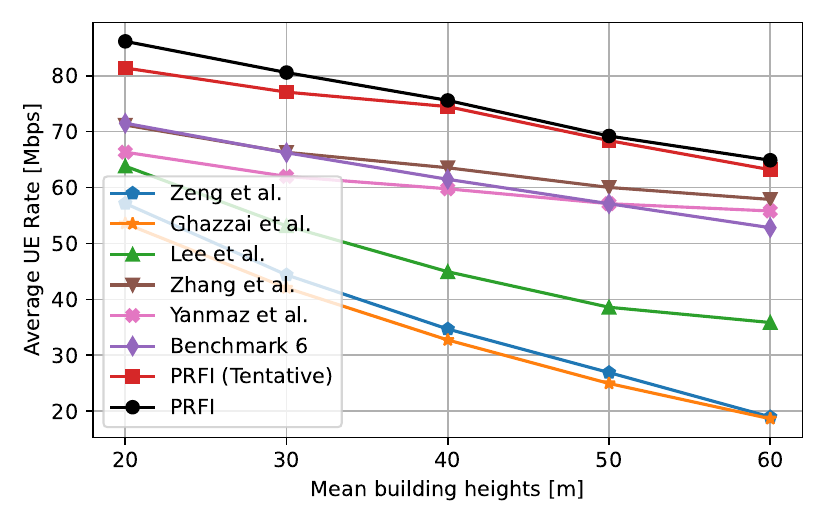}
                        \caption{Average UE rate vs. mean building height ($\minuerate=100$ Mbps).}
                        \label{fig:vs-building-height}
                    \end{figure}%
                \end{bullets}%
                \blt[observation]
                \begin{bullets}%
                    \blt[increase height] As expected, a greater \arevtwo{mean} height of the buildings results in a performance degradation.
                    \begin{bullets}%
                        \blt[explain] This is because a greater building size constrains the possible trajectories and impairs the propagation conditions by decreasing  channel gain, which limits  the locations where the UAVs can provide $\minuerate$ to the UE and the CPs where  the UAVs receive $\minuavrate$.
                    \end{bullets}%
                    \blt[outperform] Despite this fact, the proposed algorithm widely outperforms the  benchmarks.
                \end{bullets}
            \end{bullets}
        \end{extendedonly}
    \end{bullets}%
\end{bullets}%

\section{Conclusions}
\label{sec:conclusions}

\begin{bullets}%
    \blt[purpose]This paper developed a framework for path planning of multiple aerial relays that approximately optimizes communication metrics while accommodating arbitrary  constraints on the flight region.
    %
    %
    \begin{changes}%
        \blt[proposed approach]The idea is to build upon the celebrated PR algorithm, which finds a shortest path through a random graph of CPs. To cope with the need for a large number of CPs in plain PR,
        \begin{bullets}%
            \blt[pr modifiction] a modification was proposed in which the CPs are drawn around a tentative path.
        \end{bullets}%
        \blt[cases]This approach was applied to serve both static and moving users with any number of UAVs. To this end, heuristic rules leading to tentative paths with theoretical guarantees were proposed.
    \end{changes}%
    \blt[results] Numerical results demonstrate the merits of the proposed algorithms.
    \blt[future work] Future work will
    \begin{bullets}%
        \blt[more than 2] \arevtwo{investigate alternative sampling strategies for PR}
        \blt[data collection] \arevtwo{and approaches for data collection from  terrestrial wireless sensors using UAVs.}
    \end{bullets}%
\end{bullets}%

\printmybibliography
\appendices

\begin{changes}
    \section{Influence of the objective on the trajectory}\label{sec:influenceobj}
    \cmt{intro} This appendix analyzes how the selection of the objective function determines the optimal  trajectory.

\begin{bullets}%
    %
    %
    \blt[assumption]The  UE is assumed static; the case of a moving UE is left for future work.
    \blt[scenario]
    \begin{bullets}%
        \blt[area] Consider a simple scenario where a  single UAV is used. The BS and the UE are at opposite corners of a building; see Fig.~\ref{fig:diff_directions}.
        \begin{figure}
            \centering
            \caption{Scenario for the analysis of the influence of the objective function on the optimal trajectory.}
            \includegraphics[trim={2.1cm 1.5cm 2.2cm 3.2cm},clip,width=0.8\linewidth]{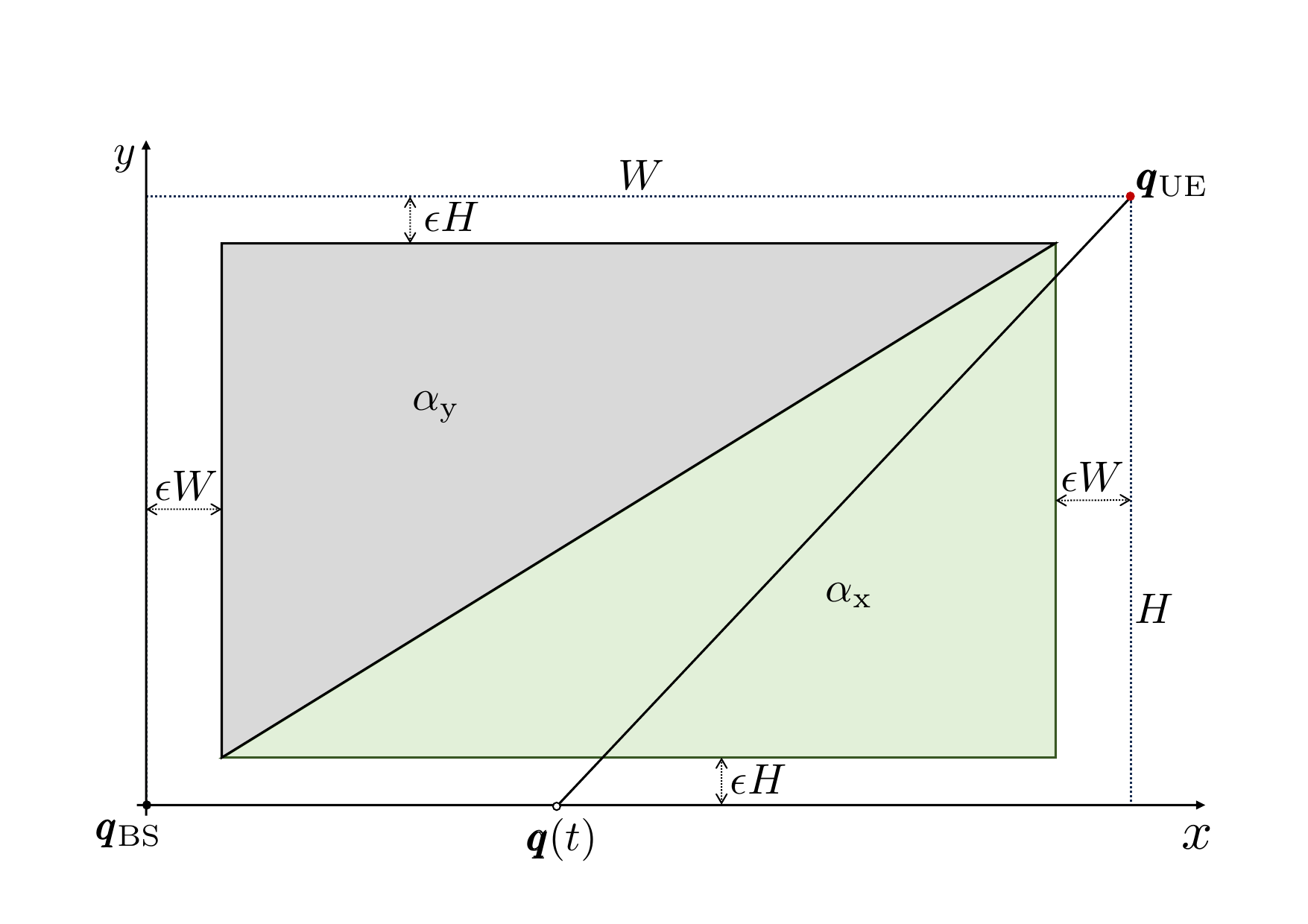}
            \label{fig:diff_directions}
        \end{figure}
        \blt[locs] Specifically, the BS and the UE are respectively at the ground locations $\locbs=[0,0]\transpose$ and $\locue=[\buildingWidth,\buildingHeight]\transpose$, where  $\buildingHeight$ and $\buildingWidth$ are two constants satisfying $\buildingHeight<\buildingWidth$.
        \blt[building]
        The floor plan of the building spans the rectangle $[\spaceToBuilding\buildingWidth, \buildingWidth-\spaceToBuilding\buildingWidth] \times [\spaceToBuilding\buildingHeight, \buildingHeight-\spaceToBuilding\buildingHeight]$, where $\spaceToBuilding>0$ is arbitrarily small.
        \blt[uav]
        \begin{bullets}%
            \blt[init loc] The UAV flies with speed $\maxuavspeed$ from the initial location $\locbs$.
            \blt[loc]For simplicity, the height of the UAV is 0. Thus, its position  at time instant $\timeInstant$ can be represented by the 2D vector $\loc(\timeInstant)\in\rfield^2$.
        \end{bullets}%

        \blt[communication]
        \begin{bullets}
            \blt[tx power] The UAV transmits with fixed power $\txpower$, wavelength
            $\wavelength$, and bandwidth $\bandwidth$. For simplicity, the BS and UAV transmit
            on orthogonal channel resources and, thus, they do not  interfere each
            other. The transmit power and bandwidth of the BS are sufficiently large
            so that the UE rate coincides with the rate of the UAV-UE link; cf.
            Sec.~\ref{sec:communication_model}.
            \blt[channel]%
            \begin{bullets}%
                \blt[model]The tomographic model (see Sec.~\ref{sec:capmap}) is adopted.
                \blt[absorption] The absorption of the building is  $\absorptiony=+\infty$~dB/m when $\buildingWidth y\geq \buildingHeight x$  and $\absorptionx>0$ when $\buildingWidth y< \buildingHeight x$.
            \end{bullets}
            \blt[noise] The noise power at the UE is $\noisepower$.
        \end{bullets}

        \blt[objective]The paper considers three objectives: \eqref{eq:timeconnectobj}, \eqref{eq:outageobj}, and \eqref{eq:cumrateobj}.
        Since the UE does not move, \eqref{eq:timeconnectobj} and \eqref{eq:outageobj} are equivalent. Thus, the following comparison focuses on the \emph{connection time} \eqref{eq:timeconnectobj} and the \emph{transferred data} \eqref{eq:cumrateobj} objectives.
        \begin{extendedonly}
            \begin{bullets}
                \blt[connection]Regarding \eqref{eq:timeconnectobj}, recall that the UE is said to be connected once $\uerate(\timeInstant)\geq\minuerate$.
                \blt[data]When it comes to \eqref{eq:cumrateobj}, there is no minimum required rate.
            \end{bullets}
        \end{extendedonly}

        \blt[Possible trajectories]
        \begin{bullets}
            \blt[time horizon]  Suppose that the time horizon is $\timemaxx\define\buildingWidth/\maxuavspeed$.
            \blt[trajectories] In this case, it can be easily seen that the optimal trajectory according to \eqref{eq:timeconnectobj} and \eqref{eq:cumrateobj} is necessarily one of the following:
            \begin{itemize}
                \item $\trajectoryxaxis$: In this trajectory, the UAV flies along the x axis from $\locbs$ to $[\buildingWidth,0]\transpose$, i.e., $
                            \loc(\timeInstant)=[\maxuavspeed\timeInstant,0]\transpose,~\timeInstant\in [0,\timemaxx].$
                \item $\trajectoryyaxis$: In this trajectory, the UAV flies along the y axis and then stops: $\loc(\timeInstant)=[0,\maxuavspeed\timeInstant]\transpose,\forall\timeInstant\in[0,\timemaxy)$ and $\loc(\timeInstant)=[0,\buildingHeight]\transpose,\forall\timeInstant\in[\timemaxy,\timemaxx],$
                        where $\timemaxy\define\buildingHeight/\maxuavspeed$.
                        \begin{extendedonly}
                            \begin{bullets}%
                                \blt Note that the reason why the UAV stays at
                                $\loc(\timemaxy)=[0,\buildingHeight]\transpose$ instead of continuing right is  to avoid losing connectivity with the BS due to the infinite absorption $\absorptiony$. It does not  continue to fly along the y axis either because in that case the distance to the UE would increase.
                            \end{bullets}%
                        \end{extendedonly}
            \end{itemize}
        \end{bullets}

        \blt[optimal trajectories]
        \begin{bullets}
            \blt[notation] Let $\trajectoryttc$ and $\trajectorydat$ respectively denote the optimal trajectories according to \eqref{eq:timeconnectobj} and \eqref{eq:cumrateobj}.
            \blt[min rate]If $\minuerate$ is too large, then neither
            $\trajectoryxaxis$ nor $\trajectoryyaxis$ will result in a finite \eqref{eq:timeconnectobj}. Thus,  suppose that $\minuerate$ is such that
            the UAV can meet the requirement $\uerate(\timeInstant)>\minuerate$ at the end of both trajectories, i.e.,
            \begin{align}
                \label{eq:cond-minuerate}
                \minuerate \leq\bandwidth\log_2\left(1+\frac{\txpower\wavelength^2}{16\pi^2\noisepower\buildingWidth^2}\right).
            \end{align}
            Under this assumption, the optimal trajectory will depend on $\absorptionx$.
            \blt[extreme cases]
            If $\absorptionx$ is very large, then clearly
            $\trajectoryttc=\trajectorydat=\trajectoryyaxis$. Conversely, if $\absorptionx$ is very small, then clearly
            $\trajectoryttc=\trajectorydat=\trajectoryxaxis$.
            \blt[intermediate case]The case where $\absorptionx$ is neither too large nor too small is addressed by the following result:
            \begin{bullets}
                \blt[theorem]
                \begin{mytheorem}%
                    \label{thm:diff_directions}
                    Let
                    \begin{align}
                        \nonumber
                        \absorptionttc \define& \frac{10}{\sqrt{(\buildingHeight-\buildingWidth)^2+\buildingHeight^2}}\\&\cdot\log_{10}\left(\frac{\txpower\wavelength^2\left(2^{\minuerate/\bandwidth}-1\right)^{-1}}{16\pi^2\noisepower\left((\buildingHeight-\buildingWidth)^2+\buildingHeight^2\right)}\right)
                        \label{eq:absorption_lower}
                    \end{align}
                    and let
                    \begin{align}
                        \label{eq:alphadatdef}
                        \absorptiondat \define & \frac{1}{\dataTransferHorAuxAux}\Bigg(\bandwidth\log_2\left(\frac{\txpower\wavelength^2}{16\pi^2\noisepower}\right)\frac{\buildingWidth}{\maxuavspeed}                            \\
                        \nonumber
                                                & - \bandwidth\log_2\left(1+\frac{\txpower\wavelength^2}{16\pi^2\noisepower\buildingWidth^2}\right)\frac{\buildingWidth-\buildingHeight}{\maxuavspeed} - \dataTransferHorAux\Bigg),
                    \end{align}
                    where
                    \begin{align}
                        \nonumber
                        \dataTransferHorAuxAux
                        \define & \bandwidth\frac{\log_210}{20\maxuavspeed}\bigg(\buildingWidth\sqrt{\buildingWidth^2+\buildingHeight^2} \\&+\buildingHeight^2\tanh^{-1}\left(\frac{\buildingWidth}{\sqrt{\buildingWidth^2+\buildingHeight^2}}\right)\bigg)
                    \end{align}
                    and
                    \begin{align}
                        \nonumber
                        \dataTransferHorAux \define & \bandwidth\timemaxx\log_2\left(\buildingWidth^2+\buildingHeight^2\right) \\&-2\bandwidth\timemaxx\log_2e + 2\bandwidth\timemaxy\log_2e\tan^{-1}\left(\frac{\buildingWidth}{\buildingHeight}\right).
                    \end{align}
                    If \eqref{eq:cond-minuerate} holds
                    and
                    \begin{nonextendedonly}
                        $\absorptionttc<\absorptionx<\absorptiondat,~$
                    \end{nonextendedonly}
                    \begin{extendedonly}
                        \begin{align}
                            \label{eq:cond-ttc-dat}
                            \absorptionttc<\absorptionx<\absorptiondat,
                        \end{align}
                    \end{extendedonly}
                    then $\trajectoryttc = \trajectoryyaxis$ and $\trajectorydat = \trajectoryxaxis$.
                \end{mytheorem}%
                \blt[proof theorem]
                \begin{nonextendedonly}
                    \begin{proof}
                        See the extended version of this paper \cite{viet2026planningarxiv}.
                    \end{proof}
                \end{nonextendedonly}
                \begin{extendedonly}
                    \begin{IEEEproof}
                        \begin{bullets}
                            \blt[sketch]
                            \begin{bullets}
                                \blt[rate] The proof will first derive the UE rate over time for both $\trajectoryxaxis$ and $\trajectoryyaxis$.
                                \blt[show] Then, it will show that
                                \begin{bullets}
                                    \blt[cond1](i) if $\absorptionttc\left(\minuerate\right)<\absorptionx$, then $\trajectoryttc=\trajectoryyaxis$;
                                    \blt[cond2] and (ii), if $\absorptionx<\absorptiondat$, then $\trajectorydat=\trajectoryxaxis$.
                                \end{bullets}
                                %
                            \end{bullets}

                            \blt[common]
                            \begin{bullets}
                                \blt[gain]Due to the tomographic model, the channel gain at time instant $\timeInstant$  between the UAV and the UE
                                for sufficiently small $\spaceToBuilding$ is
                                \begin{bullets}
                                    \blt[dB]
                                    \begin{align}
                                        \label{eq:gainforabsorptiondB}
                                        \gainAux\left(\absorption,\timeInstant\right) & =20\log_{10}\left(\frac{\wavelength}{4\pi\left\|\loc(\timeInstant)-\locue\right\|}\right) -\absorption\left\|\loc(\timeInstant)-\locue\right\|,
                                    \end{align}
                                    where $\absorption$ is the absorption.
                                    In the case of $\trajectoryxaxis$, one has $\absorption=\absorptionx$. In the case of $\trajectoryyaxis$, one has $\absorption=\absorptiony=+\infty$ for $\timeInstant\in[0,\timemaxy)$ and $\absorption=0$ for $\timeInstant\in[\timemaxy,\timemaxx]$.
                                    \blt[natural]In natural units, \eqref{eq:gainforabsorptiondB} reads as
                                    \begin{align}
                                        \alignchar
                                        \begin{intermed}
                                            \gainAuxNat(\absorption,\timeInstant)  = 10^{\gainAux\left(\absorption,\timeInstant\right)/10}
                                        \end{intermed}
                                        \jumpline
                                            & \gainAuxNat(\absorption,\timeInstant) = \left(\frac{\wavelength}{4\pi\left\|\loc(\timeInstant)-\locue\right\|}\right)^2\frac{1}{10^{\absorption\left\|\loc(\timeInstant)-\locue\right\|/10}}.
                                    \end{align}
                                \end{bullets}
                                \blt[rate]The resulting UE rate is, therefore,
                                \begin{salign}
                                    ~&\uerate(\absorption,\timeInstant)\\
                                    =& \bandwidth\log_{2}\left(1+\frac{\txpower}{\noisepower}\gainAuxNat(\absorption,\timeInstant)\right) \\
                                    \alignchar
                                    \begin{intermed}
                                        =\bandwidth\log_2\left(1+\frac{\txpower}{\noisepower}\left(\frac{\wavelength}{4\pi\left\|\loc(\timeInstant)-\locue\right\|}\right)^2\frac{1}{10^{\absorption\|\loc(\timeInstant)-\locue\|/10}}\right)
                                    \end{intermed}
                                    \jumpline
                                    =&\bandwidth\log_2\left(1+\frac{\txpower\wavelength^2}{16\pi^2\noisepower\left\|\loc(\timeInstant)-\locue\right\|^210^{\absorption\|\loc(\timeInstant)-\locue\|/10}}\right).
                                    \label{eq:rate_general}
                                \end{salign}

                                \begin{bullets}
                                    \blt[along the y]Let $\ueratey(\timeInstant)$ be the UE rate at time instant $\timeInstant$ when the UAV follows $\trajectoryyaxis$.
                                    \begin{bullets}
                                        \blt[0 to t0]For $ \timeInstant\in[0,\timemaxy)$, one has that $\absorption  =\absorptiony=+\infty$ and, therefore,
                                        \begin{align}
                                            \ueratey(\timeInstant) & \approx \uerate(\infty,\timeInstant) =0.
                                        \end{align}
                                        \blt[t0 to t1]On the other hand,  for $ \timeInstant\in[\timemaxy,\timemaxx]$, one has that $\absorption = 0$ and, as a result,
                                        \begin{salign}
                                            \ueratey(\timeInstant) & =\bandwidth\log_2\left(1+\frac{\txpower\wavelength^2}{16\pi^2\noisepower\left\|\loc(\timeInstant)-\locue\right\|^2}\right)\\
                                            & =\bandwidth\log_2\left(1+\frac{\txpower\wavelength^2}{16\pi^2\noisepower\buildingWidth^2}\right)\overset{\eqref{eq:cond-minuerate}}{\geq}\minuerate.
                                            \label{eq:rate_y}
                                        \end{salign}
                                    \end{bullets}
                                    \blt[along the x]Similarly, let  $\ueratex(\timeInstant)$  be the UE rate at time instant $\timeInstant$  when the UAV follows $\trajectoryxaxis$. Since in this case $\absorption  =\absorptionx~\forall \timeInstant\in[0,\timemaxx]$, it holds that
                                    \begin{salign}
                                        ~&\ueratex(\timeInstant)\\
                                        =& \uerate(\absorptionx,\timeInstant)\\
                                        =& \bandwidth\log_2\left(1+\frac{\txpower\lambda^2}{16\pi^2\noisepower\left\|\loc(\timeInstant)-\locue\right\|^210^{\absorptionx\|\loc(\timeInstant)-\locue\|/10}}\right).
                                        \label{eq:rate_x}
                                    \end{salign}
                                \end{bullets}
                            \end{bullets}

                            \blt[connection time] Next, it will be shown that $\trajectoryttc=\trajectoryyaxis$ whenever  $\absorptionttc<\absorptionx$. To this end, note from \eqref{eq:absorption_lower} that $\absorptionttc < \absorptionx$ if and only if
                            \begin{align}
                                    & \frac{10}{\sqrt{(\buildingHeight-\buildingWidth)^2+\buildingHeight^2}}\log_{10}\left(\frac{\txpower\wavelength^2\left(2^{\minuerate/\bandwidth}-1\right)^{-1}}{16\pi^2\noisepower\left((\buildingHeight-\buildingWidth)^2+\buildingHeight^2\right)}\right) < \absorptionx.
                            \end{align}
                            Solving for $\minuerate$ yields
                            \begin{intermed}
                                \begin{align}
                                    \alignchar
                                    \begin{intermed}
                                        \Leftrightarrow \frac{\txpower\wavelength^2\left(2^{\minuerate/\bandwidth}-1\right)^{-1}}{16\pi^2\noisepower\left((\buildingHeight-\buildingWidth)^2+\buildingHeight^2\right)} < 10^{\absorptionx\sqrt{(\buildingHeight-\buildingWidth)^2+\buildingHeight^2}/10}
                                    \end{intermed}
                                    \jumpline
                                    \alignchar
                                    \begin{intermed}
                                        \Leftrightarrow \frac{\txpower\wavelength^2}{16\pi^2\noisepower\left((\buildingHeight-\buildingWidth)^2+\buildingHeight^2\right) 10^{\absorptionx\sqrt{(\buildingHeight-\buildingWidth)^2+\buildingHeight^2}/10}} < 2^{\minuerate/\bandwidth}-1
                                    \end{intermed}
                                    \jumpline
                                    \alignchar
                                    \begin{intermed}
                                        \Leftrightarrow \log_2\left(1+\frac{\txpower\wavelength^2}{16\pi^2\noisepower\left((\buildingHeight-\buildingWidth)^2+\buildingHeight^2\right) 10^{\absorptionx\sqrt{(\buildingHeight-\buildingWidth)^2+\buildingHeight^2}/10}}\right) < \frac{\minuerate}{\bandwidth}
                                    \end{intermed}
                                \end{align}
                            \end{intermed}
                            \begin{salign}
                                ~&\minuerate \\
                                >& \bandwidth\log_2\left(1+\frac{\txpower\wavelength^2}{16\pi^2\noisepower\left((\buildingHeight-\buildingWidth)^2+\buildingHeight^2\right) 10^{\absorptionx\sqrt{(\buildingHeight-\buildingWidth)^2+\buildingHeight^2}/10}}\right)
                                \\
                                =&\bandwidth\log_2\left(1+\frac{\txpower\wavelength^2}{16\pi^2\noisepower\left\|\loc(\timeInstant_0)-\locue\right\|^210^{\absorptionx\|\loc(\timeInstant_0)-\locue\|/10}}\right)
                                \\
                                =&\ueratex(\timemaxy).
                            \end{salign}
                            Combining this inequality with \eqref{eq:rate_y} results in
                            \begin{align}
                                \ueratex(\timemaxy) < \minuerate\overset{\eqref{eq:rate_y}}{\leq}\ueratey(\timemaxy)
                            \end{align}
                            and, therefore, $\trajectoryttc=\trajectoryyaxis$.

                            \blt[transferred data] Finally, it will be shown that $\trajectorydat=\trajectoryxaxis$ if  $\absorptionx<\absorptiondat$.
                            \begin{bullets}
                                \blt[along the y] The amount of transferred data when the UAV follows $\trajectoryyaxis$ is given by
                                \begin{salign}
                                    \dataTransferVer(\timemaxx) & \define\int_{0}^{\timemaxx}\ueratey(\timeInstantAux)d\timeInstantAux\\
                                    &=\int_{0}^{\timemaxy}\ueratey(\timeInstantAux)d\timeInstantAux + \int_{\timemaxy}^{\timemaxx}\ueratey(\timeInstantAux)d\timeInstantAux\\
                                    &= 0 + \bandwidth\  \log_2\left(1+\frac{\txpower\wavelength^2}{16\pi^2\noisepower\buildingWidth^2}\right)\left(\timemaxx-\timemaxy\right)\\
                                    & =\bandwidth\  \log_2\left(1+\frac{\txpower\wavelength^2}{16\pi^2\noisepower\buildingWidth^2}\right)\frac{\buildingWidth-\buildingHeight}{\maxuavspeed}.
                                    \label{eq:total_data_y}
                                \end{salign}
                                \blt[along the x] The amount of transferred data when the UAV follows $\trajectoryxaxis$ is given by
                                \begin{salign}[eq:dtildexupperbound]
                                    ~&\dataTransferHor(\timemaxx)\\
                                    \define & \int_{0}^{\timemaxx}\ueratex(\timeInstantAux)d\timeInstantAux\\
                                    > & \int_{0}^{\timemaxx}\bandwidth\log_2\left(\frac{\txpower\wavelength^2}{16\pi^2\noisepower\left\|\loc(\timeInstantAux)-\locue\right\|^210^{\absorptionx\|\loc(\timeInstantAux)-\locue\|/10}}\right)d\timeInstantAux\\
                                    \alignchar
                                    \begin{intermed}
                                        = \int_{0}^{\timemaxx}\bandwidth\bigg(\log_2\left(\frac{\txpower\wavelength^2}{16\pi^2\noisepower}\right) - \log_2\left(\left\|\loc(\timeInstantAux)-\locue\right\|^2\right)
                                    \end{intermed}
                                    \jumpline
                                    \alignchar
                                    \begin{intermed}
                                        \nonumber
                                        \hspace{1.5cm}-\log_2\left(10^{\absorptionx\|\loc(\timeInstantAux)-\locue\|/10}\right)\bigg)d\timeInstantAux
                                    \end{intermed}
                                    \jumpline
                                    \alignchar
                                    \begin{intermed}
                                        = \bandwidth\log_2\left(\frac{\txpower\wavelength^2}{16\pi^2\noisepower}\right)\timemaxx - \bandwidth\int_{0}^{\timemaxx}\log_2\left(\left\|\loc(\timeInstantAux)-\locue\right\|^2\right)d\timeInstantAux
                                    \end{intermed}
                                    \jumpline
                                    \alignchar
                                    \begin{intermed}
                                        \nonumber
                                        \hspace{1.5cm}-\bandwidth\frac{\log_210}{10}\absorptionx\int_{0}^{\timemaxx}\|\loc(\timeInstantAux)-\locue\| d\timeInstantAux
                                    \end{intermed}
                                    \jumpline
                                    = & \bandwidth\log_2\left(\frac{\txpower\wavelength^2}{16\pi^2\noisepower}\right)\timemaxx - \dataTransferHorAux(\timemaxx) - \absorptionx\dataTransferHorAuxAux(\timemaxx)\\
                                    \define & \dataTransferHorApprox(\timemaxx),
                                    \label{eq:total_data_x}
                                \end{salign}
                                where
                                \begin{salign}
                                    \dataTransferHorAux(\timemaxx) & \define\bandwidth\int_{0}^{\timemaxx}\log_2\left(\left\|\loc(\timeInstantAux)-\locue\right\|^2\right)d\timeInstantAux
                                    \\
                                    \dataTransferHorAuxAux(\timemaxx) & \define\bandwidth\frac{\log_210}{10}\int_{0}^{\timemaxx}\|\loc(\timeInstantAux)-\locue\| d\timeInstantAux.
                                \end{salign}
                                \blt[D's computation]Since $\loc(\timeInstant)=[\maxuavspeed\timeInstant,0]\transpose$, it can be seen that
                                \begin{salign}
                                    ~&\dataTransferHorAux(\timemaxx)\\
                                    =& \frac{\bandwidth\log_2e}{\maxuavspeed}\bigg(\left(\maxuavspeed\timemaxx-\buildingWidth\right)\log\left(\left(\maxuavspeed\timemaxx-\buildingWidth\right)^2+\buildingHeight^2\right)\\
                                    \nonumber
                                    &\hspace{2cm} +2\buildingHeight\tan^{-1}\left(\frac{\maxuavspeed\timemaxx-\buildingWidth}{\buildingHeight}\right)-2\maxuavspeed\timemaxx\bigg)\\
                                    \nonumber
                                    & -\frac{\bandwidth\log_2e}{\maxuavspeed}\bigg(\left(-\buildingWidth\right)\log\left(\left(-\buildingWidth\right)^2+\buildingHeight^2\right)
                                    \\
                                    \nonumber
                                    &\hspace{5cm} +2\buildingHeight\tan^{-1}\left(\frac{-\buildingWidth}{\buildingHeight}\right)\bigg)
                                    \jumpline
                                    \alignchar
                                    \begin{intermed}
                                        = -2\bandwidth\timemaxx\log_2e
                                    \end{intermed}
                                    \jumpline
                                    \alignchar
                                    \begin{intermed}
                                        \nonumber
                                        \hspace{1cm} +\frac{\bandwidth\log_2e}{\maxuavspeed}\bigg(\buildingWidth\log\left(\buildingWidth^2+\buildingHeight^2\right)-2\buildingHeight\tan^{-1}\left(\frac{-\buildingWidth}{\buildingHeight}\right)\bigg)
                                    \end{intermed}
                                    \jumpline
                                    \alignchar
                                    \begin{intermed}
                                        = \frac{\bandwidth\log_2e}{\maxuavspeed}\buildingWidth\log\left(\buildingWidth^2+\buildingHeight^2\right) -2\bandwidth\timemaxx\log_2e
                                    \end{intermed}
                                    \jumpline
                                    \alignchar
                                    \begin{intermed}
                                        \nonumber
                                        \hspace{1cm} -2\frac{\bandwidth\log_2e}{\maxuavspeed}\buildingHeight\tan^{-1}\left(\frac{-\buildingWidth}{\buildingHeight}\right)
                                    \end{intermed}
                                    \jumpline
                                    \alignchar
                                    \begin{intermed}
                                        = \bandwidth\timemaxx\log_2e\log\left(\buildingWidth^2+\buildingHeight^2\right) -2\bandwidth\timemaxx\log_2e + 2\bandwidth\timemaxy\log_2e\tan^{-1}\left(\frac{\buildingWidth}{\buildingHeight}\right),
                                    \end{intermed}\\
                                    =& \bandwidth\timemaxx\log_2\left(\buildingWidth^2+\buildingHeight^2\right) -2\bandwidth\timemaxx\log_2e
                                    \\
                                    \nonumber
                                    & \hspace{4cm}+ 2\bandwidth\timemaxy\log_2e\tan^{-1}\left(\frac{\buildingWidth}{\buildingHeight}\right)
                                    \\
                                    =&\dataTransferHorAux
                                \end{salign}
                                and
                                \begin{salign}
                                    ~&\dataTransferHorAuxAux(\timemaxx)\\
                                    =&\bandwidth\frac{\log_210}{20\maxuavspeed}\bigg(\left(\maxuavspeed\timemaxx-\buildingWidth\right)\sqrt{\left(\maxuavspeed\timemaxx-\buildingWidth\right)^2+\buildingHeight^2}\\
                                    \nonumber
                                    &\hspace{1.5cm}-\buildingHeight^2\tanh^{-1}\left(\frac{\buildingWidth-\maxuavspeed\timemaxx}{\sqrt{\left(\maxuavspeed\timemaxx-\buildingWidth\right)^2+\buildingHeight^2}}\right)\bigg)\\
                                    \nonumber
                                    &-\bandwidth\frac{\log_210}{20\maxuavspeed}\bigg(\left(-\buildingWidth\right)\sqrt{\left(-\buildingWidth\right)^2+\buildingHeight^2} \\
                                    \nonumber
                                    &\hspace{3.5cm}-\buildingHeight^2\tanh^{-1}\left(\frac{\buildingWidth}{\sqrt{\buildingWidth^2+\buildingHeight^2}}\right)\bigg)
                                    \jumpline
                                    \alignchar
                                    \begin{intermed}
                                        = \bandwidth\frac{\log_210}{20\maxuavspeed}\bigg(\buildingWidth\sqrt{\buildingWidth^2+\buildingHeight^2} +\buildingHeight^2\tanh^{-1}\left(\frac{\buildingWidth}{\sqrt{\buildingWidth^2+\buildingHeight^2}}\right)\bigg)
                                    \end{intermed}\\
                                    %
                                    =& \bandwidth\frac{\log_210}{20\maxuavspeed}\bigg(\buildingWidth\sqrt{\buildingWidth^2+\buildingHeight^2} +\buildingHeight^2\tanh^{-1}\left(\frac{\buildingWidth}{\sqrt{\buildingWidth^2+\buildingHeight^2}}\right)\bigg)
                                    \\
                                    =&\dataTransferHorAuxAux.
                                \end{salign}

                                \blt[inequality]From \eqref{eq:dtildexupperbound},  $\dataTransferHorApprox(\timemaxx)$ is a lower bound of $\dataTransferHor(\timemaxx)$. It can also  be seen that $\dataTransferHorApprox(\timemaxx)$ is an upper bound for  $\dataTransferVer(\timemaxx)$.
                                To this end, note from \eqref{eq:alphadatdef} that
                                \begin{align}
                                        & \absorptionx < \absorptiondat=\frac{1}{\dataTransferHorAuxAux}\Bigg(\bandwidth\log_2\left(\frac{\txpower\wavelength^2}{16\pi^2\noisepower}\right)\frac{\buildingWidth}{\maxuavspeed}          \\
                                    \nonumber
                                        & \hspace{1cm}- \bandwidth\log_2\left(1+\frac{\txpower\wavelength^2}{16\pi^2\noisepower\buildingWidth^2}\right)\frac{\buildingWidth-\buildingHeight}{\maxuavspeed} - \dataTransferHorAux\Bigg).
                                \end{align}
                                Rearranging terms, one finds that
                                \begin{salign}
                                    \dataTransferVer(\timemaxx)=
                                    &
                                    \bandwidth\log_2\left(1+\frac{\txpower\wavelength^2}{16\pi^2\noisepower\buildingWidth^2}\right)\frac{\buildingWidth-\buildingHeight}{\maxuavspeed} \\
                                    <& \bandwidth\log_2\left(\frac{\txpower\wavelength^2}{16\pi^2\noisepower}\right)\frac{\buildingWidth}{\maxuavspeed} - \dataTransferHorAux - \absorptionx\dataTransferHorAuxAux
                                    \\
                                    =&                       \dataTransferHorApprox(\timemaxx)                        \end{salign}
                                Thus, to sum up,
                                \begin{align}
                                    \dataTransferVer(\timemaxx) \overset{}{<} \dataTransferHorApprox(\timemaxx) \overset{}{<} \dataTransferHor(\timemaxx)
                                \end{align}
                                and, as a result,
                                $\trajectorydat=\trajectoryxaxis$.
                            \end{bullets}
                        \end{bullets}
                    \end{IEEEproof}
                \end{extendedonly}
            \end{bullets}
        \end{bullets}
    \end{bullets}
    In words, when $\absorptionx$ takes intermediate values, the optimal UAV trajectory depends on whether one adopts \eqref{eq:timeconnectobj} or \eqref{eq:cumrateobj}.

    \begin{extendedonly}
        \blt[simulation]
        \begin{bullets}%
            \blt[parameters] A numerical example will be presented to illustrate Theorem~\ref{thm:diff_directions}. The simulation parameters are listed on Table~\ref{simparams}.
            \begin{table}[!t]
                \begin{center}
                    \caption{}
                    \label{simparams}
                    \begin{tabular}{ |c|c|c| }
                        \hline
                        \textbf{Notation}                        & \textbf{Physical meaning}       & \textbf{Simulation value} \\
                        \hline
                        $\buildingWidth \times \buildingHeight $ & Building dimensions             & 40 $\times$ 27 m          \\
                        \hline
                        $\spaceToBuilding$                       & Distance from the building edge & $10^{-3}$ m               \\
                        \hline
                        $\maxuavspeed$                           & Maximum UAV speed               & 4 m/s                     \\
                        \hline
                        $\txpower$                               & UAV transmit power              & 0.1 W                     \\
                        \hline
                        $f$                                      & Carrier frequency               & 6 GHz                     \\
                        \hline
                        $\bandwidth$                             & Bandwidth                       & 20 MHz                    \\
                        \hline
                        $\noisepower$                            & Noise power                     & -97 dBm                   \\
                        \hline
                        $\minuerate$                             & Minimum required UE rate        & 145 Mbps                  \\
                        \hline
                    \end{tabular}
                \end{center}
            \end{table}
            ~They were  chosen so that \eqref{eq:cond-minuerate} holds. As a result, one has  $\absorptionttc=0.59$ and $\absorption_{\text{dat}}=0.75$.

            \blt[figure] Fig.~\ref{fig:ttc_vs_data} shows the UE rate and the amount of transferred data over time when the UAV follows $\trajectoryxaxis$ and $\trajectoryyaxis$.
            \begin{figure}[t]
                \centering
                \begin{subfigure}[t]{.45\textwidth}
                    \centering
                    \includegraphics[width=\textwidth]{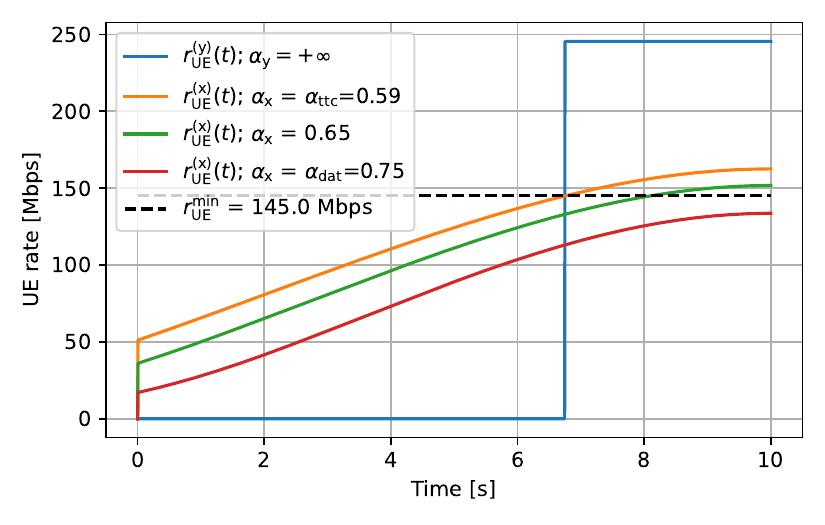}
                    \caption{}
                    \label{fig:sim_rate}
                \end{subfigure}
                \hfill
                \begin{subfigure}[t]{.45\textwidth}
                    \centering
                    \includegraphics[width=\textwidth]{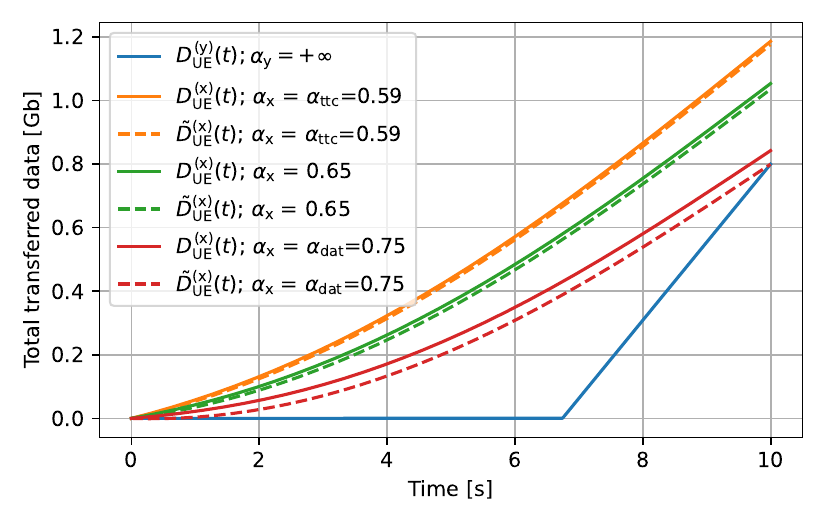}
                    \caption{}
                    \label{fig:sim_data}
                \end{subfigure}
                \caption{UE rate and the amount of transferred data over time; $\minuerate=145$ Mbps.}
                \label{fig:ttc_vs_data}
            \end{figure}
            \blt[statement] It is observed that:

            \begin{itemize}
                \item If $\absorptionx>\absorptionttc$, then $\trajectoryyaxis$ minimizes the connection time ($\trajectoryttc = \trajectoryyaxis$); cf. Fig.~\ref{fig:sim_rate}.
                \item If $\absorptionx<\absorptiondat$, then $\trajectoryxaxis$ maximizes the amount of transferred data ($\trajectorydat = \trajectoryxaxis$); cf. Fig.~\ref{fig:sim_data}.
                \item When $\absorptionttc<\absorptionx<\absorptiondat$, one has that $\trajectoryttc = \trajectoryyaxis$ and $\trajectorydat = \trajectoryxaxis$. This is precisely as predicted by
                        Theorem~\ref{thm:diff_directions}.
            \end{itemize}
        \end{bullets}%
    \end{extendedonly}

\end{bullets}%
    
\end{changes}

\balance

\section{Proof of Theorem~\ref{thm:guarantee}}
\label{sec:proof:guarantee}

\begin{extendedonly}
    \begin{bullets}%
        \blt[given path] Let $\pathsingle_2\define\{\loc_2[0],\loc_2[1],\ldots,\loc_2[\numwp_0-1]\}$ be the path for UAV-2 returned by Algorithm~\ref{algo:trajtwo}.
        \blt[max num lift] Observe that, for a given $\higherbheight$, there is a maximum number of times that $\pathsingle_2$ can be lifted before the lifting operator returns the same path as its input, that is, $\liftfun^{(\indlift)}(\pathsingle_2) = \liftfun^{(\indlift+1)}(\pathsingle_2)$ for a sufficiently large $\indlift$. Let $\numlift$ denote the smallest of such values of $\indlift$, i.e., $
            \numlift\define \min\{\indlift\in\mathbb{N}:\liftfun^{(\indlift)}(\pathsingle_2)=\liftfun^{(\indlift+1)}(\pathsingle_2)\}$.

        \blt[UAV-2 path lifted to top] If Algorithm~\ref{algo:trajone} fails to provide a valid path, it necessarily fails to find a valid path at all iterations and, in particular, at the $\numlift$-th iteration. Therefore, to prove the theorem, it suffices to show that the algorithm succeeds if it reaches the $\numlift$-th iteration, which is the worst case.
        Equivalently, it has to be shown that Algorithm~\ref{algo:trajone} can find a path to $\mathbb{N} \times \destsetuo$  through the extended graph corresponding to $\liftfun^{(\numlift)}(\pathsingle_2)$. 
        \blt[combined path feasible] Since Algorithm~\ref{algo:trajone} is based on a shortest path algorithm,
        it will return a path to $\mathbb{N} \times \destsetuo$ if at least one such a path exists. 
        Therefore, to prove the theorem, it suffices to find \textit{any} path to $\mathbb{N} \times \destsetuo$. 

        \blt[proof structure]To establish the existence of such a path in the extended graph, the rest of the proof will design a path for UAV-1 and show that the resulting  combined path $\trajdouble\define\{\exconfpt[0],\ldots,\exconfpt[\numwp-1]\}$ is valid. With  $\exconfpt[\indwp]=[
            \exloc_1[\indwp],\exloc_2[\indwp]
            ]$, this means that 
            $\exloc_1[\numwp-1]\in\destsetuo$ and 
            the following conditions hold:
        \begin{itemize}
            \item[C1:] The rate between the BS and UAV-1 is at least $2\minuavrate$, i.e., $\capacity(\locbs,\exloc_1[\indwp])\geq 2\minuavrate$ for all $\indwp$, and
            \item[C2:] The rate from UAV-1 to UAV-2 is at least $\minuavrate$, i.e., $\capacity(\exloc_1[\indwp],$ $\exloc_2[\indwp])\geq \minuavrate$ for all $\indwp$.
        \end{itemize}%
        \blt[path separation]To simplify the exposition, the path $\liftfun^{(\numlift)}(\pathsingle_2)$ will be separated into the following subpaths:        
            \begin{salign}[eq:subpaths]
                \trajuavsecup   \define \{  & \loc_2[0],\liftfun^{(1)}(\loc_2[0]),\ldots,\liftfun^{(\numliftup)}(\loc_2[0])\},\label{eq:uav2path1}       \\
                \nonumber
                \trajuavsectop   \define \{ & \liftfun^{(\numliftup)}(\loc_2[0]),\loc_2^{(\numlift)}[1],\ldots,\loc_2^{(\numlift)}[\numwp_{\numlift}-2], \\&\liftfun^{(\numliftdown)}(\loc_2[\numwp_{0}-1])\},\label{eq:uav2path2} \\
                \trajuavsecdown  \define \{ & \liftfun^{(\numliftdown)}(\loc_2[\numwp_{0}-1]),\ldots,\loc_2[\numwp_0-1]\}\label{eq:uav2path3},
            \end{salign}        
        where 
        \begin{salign}                    
        \numliftup\define&   \min\{\indlift\in\mathbb{N}:\liftfun^{(\indlift)}(\loc_2[0])=\liftfun^{(\indlift+1)}(\loc_2[0])\}\\ \numliftdown\define&  \min\{\indlift\in\mathbb{N}:\liftfun^{(\indlift)}(\loc_2[\numwp_0-1])=\liftfun^{(\indlift+1)}(\loc_2[\numwp_0-1])\},
        \end{salign}
            and $\trajuavsectop$ is the shortest path from $\liftfun^{(\numliftup)}(\loc_2[0])$ to $\liftfun^{(\numliftdown)}(\loc_2[\numwp_0-1])$ in the graph of Sec.~\ref{sec:static:pathuav2}.
        For each subpath in \eqref{eq:subpaths}, a subpath will be designed for UAV-1. The resulting combined path will be   valid because the last subpath  ends at $\destsetuo$  by construction and each combined subpath will be shown to satisfy C1 and C2.         

        \begin{figure}
            \centering
            \includegraphics[width=\linewidth, trim={2.4cm 3.5cm 1.3cm 3.3cm},clip]{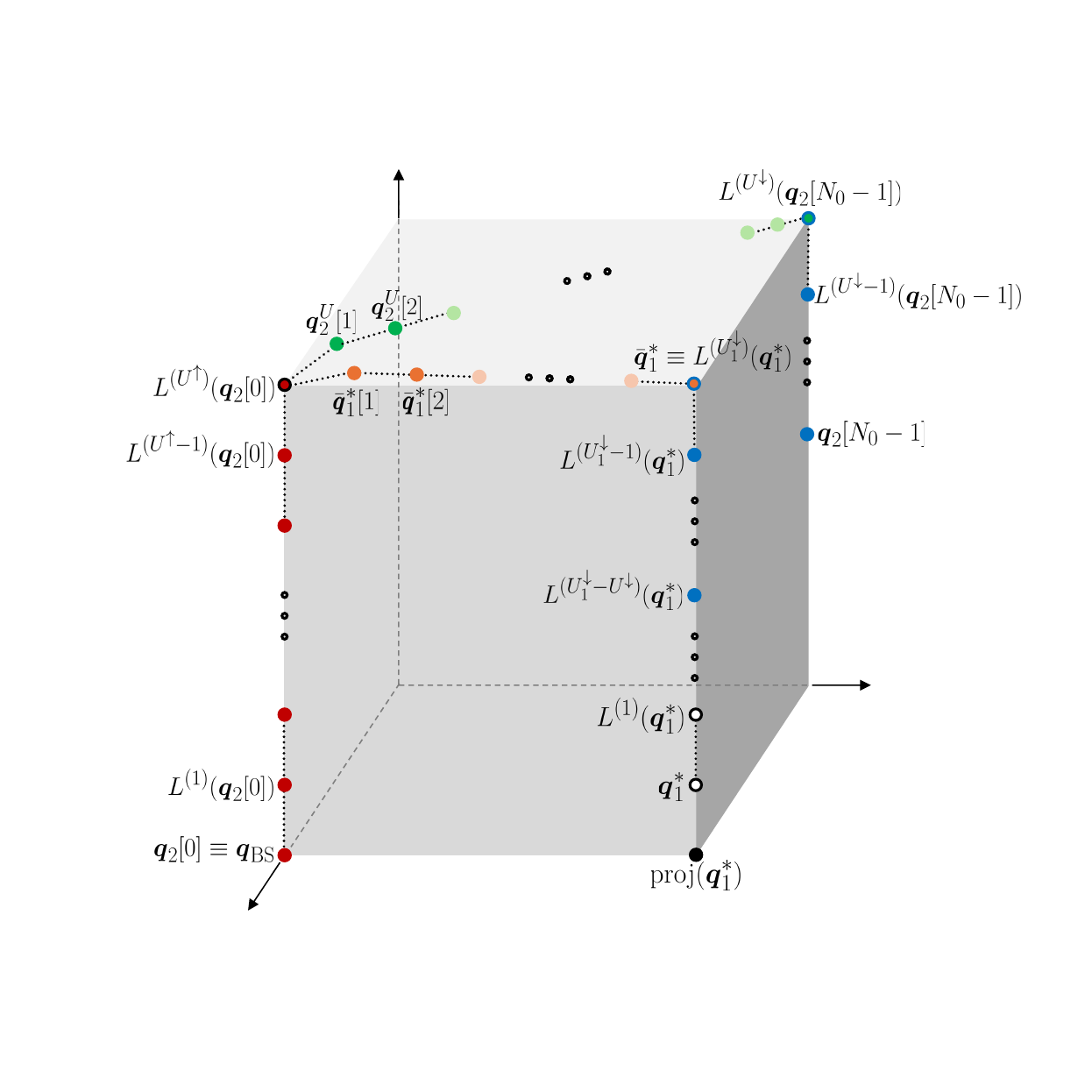}{}
            \caption{Illustration of the trajectory constructed in the proof of \thref{thm:guarantee}.}
            \label{fig:enter-label}
        \end{figure}
        \blt[part 1 - take off]\textbf{Take-off subpath ($\pathsingle^\uparrow$):}
        \begin{bullets}
            \blt[choose path] In the considered combined path, when UAV-2 follows $\trajuavsecup$, UAV-1 follows $\trajuavsecup$ as well. Recall that the minimum separation between the UAVs was disregarded in \eqref{eq:trajproblem} for simplicity. Thus, the combined path of UAV-1 and UAV-2 is given by
            \begin{align}
                \nonumber
                \trajdouble^{\uparrow}\define\Big\{ & \Big[\loc_2[0],                           \loc_2[0]\Big],                             \left[\liftfun^{(1)}(\loc_2[0]),           \liftfun^{(1)}(\loc_2[0])\right],           \ldots ,                                  & \\
                                                    & \left[\liftfun^{(\numliftup)}(\loc_2[0]),                                                                                                                             \liftfun^{(\numliftup)}(\loc_2[0])\right]\Big\}.
            \end{align}
            \blt[showing feasible]
            \begin{bullets}
                \blt[c1 - rate btw bs-uav1] C1: By hypothesis,
                \begin{align}
                    \higherbheight & \leq\sqrt{[\capacitydist\inv(2\minuavrate)]^2-[\capacitydist\inv(2\minuavrate+\minuerate)]^2}, \label{eq:heightassumption}
                \end{align}
                which implies that $ \higherbheight^2            \leq[\capacitydist\inv(2\minuavrate)]^2$ or, equivalently, $
                    \higherbheight$ $\leq\capacitydist\inv(2\minuavrate)$.
                Thus, since $\loc_2[0]=\locbs$, $\forall\exloc_{1}\in\trajuavsecup$, it follows that $\|\locbs-\exloc_1\|\leq\|\locbs-\liftfun^{(\numliftup)}(\loc_2[0])\|\leq\higherbheight  \leq\capacitydist\inv(2\minuavrate)
                $.
                Noting that $\capacity$ is a decreasing function of the distance, yields
                \begin{align}
                    \capacity(\locbs,\exloc_1) & \geq\capacity\left(\locbs,\liftfun^{(\numliftup)}(\loc_2[0])\right) \geq2\minuavrate\label{eq:rateabovebs},
                \end{align}
                which establishes C1.

                \blt[c2 - rate uav1-uav2] C2: trivial.
            \end{bullets}%
        \end{bullets}%

        \blt[part 2 - on the top layer] \textbf{Top subpath $(\trajuavsectop)$:} The combined path will be divided into two parts:

        \begin{bullets}%
            \blt[uav2 moves] \textit{Part 1:}
            \begin{bullets}
                \blt[choose path] UAV-1 stays at $\liftfun^{(\numliftup)}(\loc_2[0])$ while UAV-2 follows $\trajuavsectop$ in \eqref{eq:uav2path2}. Note that, as per $\trajuavsectop$, UAV-2 flies at constant height $\higherbheight$ from $\liftfun^{(\numliftup)}(\loc_2[0])$ to $\liftfun^{(\numliftdown)}(\loc_2[\numwp_{0}-1])$. The combined path is then
                \begin{align}
                    \nonumber
                     & \overset{{\rightarrow}}{\trajdouble_1}\define\Big\{\left[\liftfun^{(\numliftup)}(\loc_2[0]), \liftfun^{(\numliftup)}(\loc_2[0])\right],                                        \\
                    \nonumber
                     & \left[\liftfun^{(\numliftup)}(\loc_2[0]), \loc_2^{(\numlift)}[1]\right],   \ldots,  \left[\liftfun^{(\numliftup)}(\loc_2[0]), \loc_2^{(\numlift)}[\numwp_{\numlift}-2]\right], \\
                     & \left[\liftfun^{(\numliftup)}(\loc_2[0]),  \liftfun^{(\numliftdown)}(\loc_2[\numwp_{0}-1])\right]\Big\}.
                \end{align}

                \blt[showing feasible]
                \begin{bullets}
                    \blt[c1 - rate bs \ra uav1] C1: It follows from \eqref{eq:rateabovebs}.

                    \blt[c2 - rate uav1 \ra uav2] C2: To prove that $\capacity(\liftfun^{(\numliftup)}(\loc_2[0]),\exloc_2)\geq\minuavrate,\forall\exloc_2\in\trajuavsectop$,
                    \begin{bullets}
                        \blt[dist to above des uav2] it can be observed that $\trajuavsectop$ is the shortest path from $\liftfun^{(\numliftup)}(\loc_2[0])$ to $\liftfun^{(\numliftdown)}(\loc_2[\numwp_0-1])$, hence $\forall\exloc_2\in\trajuavsectop$,
                        \begin{align}
                            \|\liftfun^{(\numliftup)}(\loc_2[0])-\exloc_2\| \leq\|\liftfun^{(\numliftup)}(\loc_2[0])-\liftfun^{(\numliftdown)}(\loc_2[\numwp_0-1])\|.
                            \label{eq:capontopdes}
                        \end{align}
                        Hence, it suffices to prove that $\|\liftfun^{(\numliftup)}(\loc_2[0])-\liftfun^{(\numliftdown)}$ $(\loc_2[\numwp_0-1])\| \leq \capacitydist^{-1}(\minuavrate)$.
                        \blt[rates at des uav1] To this end, recall that  $\loc_2[\numwp_0-1]\in\destsetut$. Thus, when UAV-2 is at $\loc_2[\numwp_0-1]$, there exists a location $\locdesuavo=[q_{1x}^*,q_{1y}^*,q_{1z}^*]\transpose$ for UAV-1 such that
                        \begin{align}
                            \label{eq:capontopdesdf}
                            \locdesuavo  \in\rateset(\locbs,2\minuavrate+\minuerate)  \cap\rateset(\loc_2[\numwp_0-1],\minuavrate+\minuerate).
                        \end{align}
                        It follows that $ \capacity(\locbs,\locdesuavo)  \geq
                            2\minuavrate+\minuerate$ and $
                            \capacity(\loc_2[\numwp_0-1],\locdesuavo)\geq
                            \minuavrate+\minuerate, \label{eq:ratedesuav1}$
                        or, equivalently, that
                        \begin{salign}
                            \|\locbs-\locdesuavo\| &\leq\capacitydist\inv(2\minuavrate+\minuerate), \text{ and}
                            \label{eq:distdesuavone}
                            \\
                            \|\loc_2[\numwp_0-1]-\locdesuavo\| &\leq\capacitydist\inv(\minuavrate+\minuerate).
                            \label{eq:distdesuavonetwo}
                        \end{salign}

                        \blt[triangle inequality] Let
                        $\locdesuavotop=[q_{1x}^{*},q_{1y}^{*},\higherbheight]\transpose$
                        and consider three points at the same height
                        $\higherbheight$:
                        $\liftfun^{(\numliftup)}(\loc_2[0]),\liftfun^{(\numliftdown)}(\loc_2[\numwp_{0}-1])$,
                        and $\locdesuavotop$. From the triangle inequality, one
                        has
                        \begin{align}
                            \nonumber
                             & \|\liftfun^{(\numliftup)}(\loc_2[0])  - \liftfun^{(\numliftdown)}(\loc_2[\numwp_{0}-1])\|                                                                 \\
                             & \leq\|\liftfun^{(\numliftup)}(\loc_2[0]) - \locdesuavotop\| + \|\liftfun^{(\numliftdown)}(\loc_2[\numwp_{0}-1])-\locdesuavotop\|.\label{eq:ineqsingtosum}
                        \end{align}
                        Since $\liftfun^{(\numliftup)}(\loc_2[0])=\liftfun^{(\numliftup)}(\locbs)$ and $\locdesuavotop$ are at the same height, it follows that
                        \begin{equation}
                            \|\liftfun^{(\numliftup)}(\loc_2[0]) - \locdesuavotop\| \leq\|\locbs-\locdesuavo\|  \overset{\eqref{eq:distdesuavone}}{\leq} \capacitydist^{-1}(2\minuavrate+\minuerate).
                            \label{eq:distineq1}
                        \end{equation}
                        Similarly, since $\liftfun^{(\numliftdown)}(\loc_2[\numwp_{0}-1])$ and $\locdesuavotop$ are at the same height,
                        \begin{subequations}
                            \begin{align}
                                \|\liftfun^{(\numliftdown)}(\loc_2[\numwp_{0}-1]) -\locdesuavotop\| & \leq \|  \loc_2[\numwp_{0}-1]-\locdesuavo\|                                                                \\
                                                                                                    & \overset{\eqref{eq:distdesuavonetwo}}{\leq}\capacitydist^{-1}(\minuavrate+\minuerate).\label{eq:distineq2}
                            \end{align}
                        \end{subequations}
                        From \eqref{eq:distineq1} and  \eqref{eq:distineq2},
                        \begin{align}
                            \nonumber
                            \|\liftfun^{(\numliftup)}(\loc_2[0]) - \locdesuavotop\| + \|\liftfun^{(\numliftdown)}(\loc_2[\numwp_{0}-1])-\locdesuavotop\| \\
                            \leq\capacitydist^{-1}(2\minuavrate+\minuerate) +  \capacitydist^{-1}(\minuavrate+\minuerate).\label{eq:ineqsumdist}
                        \end{align}

                        \blt[lemma inequality]The following result provides an upper bound for the right-hand side:
                        \begin{mylemma}
                            \label{lem:capac}
                            If Eq.~\eqref{lemma1:hypothesis} holds,
                            then
                            \begin{equation}
                                \label{eq:capinvsumbound}
                                \capacitydist^{-1}(2\minuavrate+\minuerate) + \capacitydist^{-1}(\minuavrate+\minuerate)<\capacitydist^{-1}(\minuavrate).
                            \end{equation}
                        \end{mylemma}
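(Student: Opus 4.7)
The plan is to invert the LOS rate function in closed form and reduce \eqref{eq:capinvsumbound} to an elementary single-variable inequality. From the channel model \eqref{eq:channelmodel}, setting $\alpha := \txpower \txgain \rxgain (\lambda/4\pi)^2$, one obtains the explicit inverse
$$\capacity^{-1}(r) = \sqrt{\frac{\alpha}{2^{r/\bandwidth} - 1}}.$$
I would then introduce the shorthand $x := 2^{\minuavrate/\bandwidth}$ and $y := 2^{\minuerate/\bandwidth}$, so that the hypothesis $\minuerate \geq 4\minuavrate > 0$ translates into $y \geq x^4$ with $x > 1$. Dividing \eqref{eq:capinvsumbound} by $\sqrt{\alpha}$ and multiplying by $\sqrt{x - 1}$ turns the desired bound into
$$\sqrt{\frac{x - 1}{x^2 y - 1}} + \sqrt{\frac{x - 1}{xy - 1}} < 1.$$

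Next, I would exploit the hypothesis by noting that the left-hand side is strictly decreasing in $y$, so it suffices to establish the inequality in the binding case $y = x^4$, for which $x^2 y - 1 = x^6 - 1$ and $xy - 1 = x^5 - 1$. Applying the factorization $x^k - 1 = (x - 1)(1 + x + \cdots + x^{k-1})$ cancels the factor $x - 1$ and reduces the claim to
$$\frac{1}{\sqrt{\sum_{k=0}^{5} x^k}} + \frac{1}{\sqrt{\sum_{k=0}^{4} x^k}} < 1.$$
Each summand is strictly decreasing on $[1, \infty)$, so the left-hand side is bounded above by its limit as $x \to 1^+$, namely $1/\sqrt{6} + 1/\sqrt{5} \approx 0.856$, which is strictly less than $1$. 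Since $x > 1$ under the hypotheses of the lemma, the strict inequality follows.

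The argument is entirely elementary once the closed form of $\capacity^{-1}$ is written down; the main conceptual step is recognizing that $y = x^4$ is the binding case and that the remaining single-variable inequality is immediately verified at its supremum $x = 1$. No substantial obstacle is anticipated; the factor $4$ in the hypothesis $\minuerate \geq 4\minuavrate$ is precisely what guarantees enough room between $1/\sqrt{6} + 1/\sqrt{5}$ and $1$ for the strict bound to hold.
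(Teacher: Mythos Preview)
Your proof is correct. Both your argument and the paper's start from the closed-form inverse $\capacity^{-1}(r)=\sqrt{\alpha/(2^{r/\bandwidth}-1)}$ and reduce the claim to an elementary inequality, but the reductions differ. The paper sets $a=\minuavrate/\bandwidth$ and $\ell=\minuerate/\minuavrate\geq 4$, first bounds \emph{both} summands crudely by $\sqrt{\alpha/(2^{\ell a}-1)}$ to obtain $2\sqrt{\alpha/(2^{4a}-1)}$, and then shows $4/(2^{4a}-1)\leq 1/(2^a-1)$ via the factorization $2^{4a}-1=(2^a-1)(2^a+1)(2^{2a}+1)$ together with $(2^a+1)(2^{2a}+1)\geq 4$. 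Your route instead passes to the binding case $y=x^4$ immediately, keeps the two summands separate, and evaluates the resulting expression $1/\sqrt{\sum_{k=0}^{5}x^k}+1/\sqrt{\sum_{k=0}^{4}x^k}$ at its supremum $x\to 1^+$, obtaining the sharper numerical bound $1/\sqrt{6}+1/\sqrt{5}\approx 0.856$. The paper's bound is essentially tight (it hits $1$ in the limit $a\to 0$, with strictness coming only from the earlier crude step), whereas yours leaves visible slack; this makes your argument marginally more informative about how much the hypothesis $\minuerate\geq 4\minuavrate$ could be relaxed.
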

                        \begin{IEEEproof}
                            See Appendix~\ref{sec:proof:capac}.
                        \end{IEEEproof}

                        \blt[rate uav1 \ra uav2] From \eqref{eq:capontopdes}, \eqref{eq:ineqsingtosum}, \eqref{eq:ineqsumdist} and Lemma  \ref{lem:capac}, it holds that, $\forall \exloc_2\in\trajuavsectop$,
                        \begin{subequations}
                            \begin{align}
                                \nonumber
                                 & \|\liftfun^{(\numliftup)}(\loc_2[0]) - \exloc_2\|\overset{\eqref{eq:capontopdes}}{\leq}\|\liftfun^{(\numliftup)}(\loc_2[0]) - \liftfun^{(\numliftdown)}(\loc_2[\numwp_{0}-1])\|                                 \\
                                \nonumber
                                 & \overset{\eqref{eq:ineqsingtosum}}{\leq}\|\liftfun^{(\numliftup)}(\loc_2[0]) - \locdesuavotop\| + \|\liftfun^{(\numliftdown)}(\loc_2[\numwp_{0}-1]) -\locdesuavotop\|                                           \\
                                \nonumber
                                 & \overset{\eqref{eq:ineqsumdist}}{\leq}\capacitydist^{-1}(2\minuavrate+\minuerate) +  \capacitydist^{-1}(\minuavrate+\minuerate) \overset{(\text{Lemma \ref{lem:capac}})}{\leq} \capacitydist^{-1}(\minuavrate).
                            \end{align}
                        \end{subequations}
                    \end{bullets}
                \end{bullets}
                \blt[conclusion] Therefore, $\capacity(\liftfun^{(\numliftup)}(\loc_2[0]),\exloc_2) \geq\minuavrate$, which establishes C2.
            \end{bullets}

            \blt[uav1 moves] \textit{Part 2:}
            \begin{bullets}%
                \blt[choose path] Next, UAV-1 follows a shortest path from $\liftfun^{(\numliftup)}$ $(\loc_2[0])$ to $\locdesuavotop$ in $\grid$ while UAV-2 stays at $\liftfun^{(\numliftdown)}(\loc_2[\numwp_{0}-1])$.
                \begin{bullets}%
                    \blt[adjacent grid points] Since the flight grid is dense enough, there exists a sequence of adjacent grid points $\bar{\pathsingle}\define\big\{\liftfun^{(\numliftup)}(\loc_2[0]),\locdesuavotop[1],\locdesuavotop[2],\ldots,\locdesuavotop[\bar{\numwp}-2],\locdesuavotop\big\}$ that are sufficiently close to the line segment from $\liftfun^{(\numliftup)}(\loc_2[0])$ to $\locdesuavotop$ so that the rate between the BS and UAV-1 on this path will be at least $\capacity(\loc_2[0],\locdesuavotop)$.
                    \blt[combined path] The combined path is then
                    \begin{align}
                        \nonumber
                         & \overset{{\rightarrow}}{\trajdouble_2}\define
                        \Big\{\left[\liftfun^{(\numliftup)}(\loc_2[0]), \liftfun^{(\numliftdown)}(\loc_2[\numwp_{0}-1])\right],          \\
                         & \left[\locdesuavotop[1],                      \liftfun^{(\numliftdown)}(\loc_2[\numwp_{0}-1])\right], \ldots, \\
                        \nonumber
                         & \left[\locdesuavotop[\bar{\numwp}-2] ,          \liftfun^{(\numliftdown)}(\loc_2[\numwp_{0}-1])\right],
                        \left[\locdesuavotop,                    \liftfun^{(\numliftdown)}(\loc_2[\numwp_{0}-1])\right]\Big\}.
                    \end{align}
                \end{bullets}

                \blt[showing feasible]
                \begin{bullets}%
                    \blt[c1 rate bs \ra uav1] C1:
                    \begin{bullets}%
                        \blt[distances] Since $\bar{\pathsingle}$ is the set of adjacent grid points approximately on the line segment between $\liftfun^{(\numliftup)}(\loc_2[0])$ and $\locdesuavotop$, it holds that, $\forall\exloc_1\in\bar{\pathsingle}$,
                        \begin{equation}
                            \|\liftfun^{(\numliftup)}(\loc_2[0]) -\exloc_1\|\leq\|\liftfun^{(\numliftup)}(\loc_2[0]) -\locdesuavotop\|.
                            \label{eq:distontop}
                        \end{equation}
                        Squaring both sides and adding $\|\locbs - \liftfun^{(\numliftup)}(\loc_2[0])\|^2$ yields
                        \begin{equation}
                            \begin{split}
                                &\|\locbs - \liftfun^{(\numliftup)}(\loc_2[0])\|^2  + \|\liftfun^{(\numliftup)}(\loc_2[0]) - \exloc_1\|^2\\
                                & \leq \|\locbs - \liftfun^{(\numliftup)}(\loc_2[0])\|^2 + \|\liftfun^{(\numliftup)}(\loc_2[0]) - \locdesuavotop\|^2.
                            \end{split}
                        \end{equation}
                        \blt[greater than rate at des] From Pythagoras' theorem, $\forall \exloc_1\in\bar{\pathsingle}, \|\locbs - \exloc_1\|^2 \leq \|\locbs  - \locdesuavotop\|^2$, which in turn implies that
                        \begin{align}
                            \capacity(\locbs,\exloc_1) \geq \capacity(\locbs, \locdesuavotop).\label{eq:capuav1top}
                        \end{align}
                        \blt[projection] Let $\text{proj}(\loc)\define[x,y,\loczbs]\transpose$ be the projection of $\loc=[x,y,z]\transpose$ on the horizontal plane containing the BS. Then, $\text{proj}(\locdesuavo)\equiv\text{proj}(\locdesuavotop)$. From Pythagoras' theorem,
                        \begin{subequations}
                            \begin{align}
                                 & \|\locbs-\locdesuavotop\|^2 \leq \higherbheight^2 + \|\locbs-\text{proj}(\locdesuavotop)\|^2                                                                                                \\
                                 & = \higherbheight^2 + \|\locbs-\text{proj}(\locdesuavo)\|^2 \leq \higherbheight^2 + \|\locbs-\locdesuavo\|^2                                                                                 \\
                                 & \overset{\eqref{eq:distdesuavone}}{\leq} \higherbheight^2 + [\capacitydist\inv(2\minuavrate+\minuerate)]^2 \overset{\eqref{eq:heightassumption}}{\leq} [\capacitydist\inv(2\minuavrate)]^2.
                            \end{align}
                        \end{subequations}
                        Hence,
                        \begin{align}
                            \capacity(\locbs,\locdesuavotop) \geq 2\minuavrate.
                            \label{eq:rateabovedesuav1}
                        \end{align}
                        \blt[conclusion] From \eqref{eq:capuav1top} and \eqref{eq:rateabovedesuav1}, $\forall\exloc_1\in\bar{\pathsingle},$
                        \begin{equation}
                            \capacity(\locbs,\exloc_1)\geq 2 \minuavrate,\label{eq:ratebtwbsuav1}
                        \end{equation}
                    \end{bullets}
                    which proves C1.

                    \blt[c2 rate uav1 \ra uav2] C2:
                    \begin{bullets}%
                        \blt[largest distances] Since $\bar{\pathsingle}$ comprises  grid points sufficiently close to the line segment between $\liftfun^{(\numliftup)}(\loc_2[0])$ and $\locdesuavotop$, $\forall\exloc_1\in\bar{\pathsingle}$,
                        \begin{equation}
                            \|\exloc_1 -\locdesuavotop\|\leq\|\liftfun^{(\numliftup)}(\loc_2[0]) -\locdesuavotop\|.\label{eq:largestdist}
                        \end{equation}
                        \blt[triangle inequality] From the triangle inequality, $\forall\exloc_1\in\bar{\pathsingle}$,
                        \begin{salign}
                            \nonumber
                            & \|\exloc_1-\liftfun^{(\numliftdown)}(\loc_2[\numwp_{0}-1])\| \\
                            & \leq \|\exloc_1 -\locdesuavotop\| + \|\locdesuavotop-\liftfun^{(\numliftdown)}(\loc_2[\numwp_{0}-1])\|\\
                            & \overset{\eqref{eq:largestdist}}{\leq} \|\liftfun^{(\numliftup)}(\loc_2[0])-\locdesuavotop\| + \|\locdesuavotop-\liftfun^{(\numliftdown)}(\loc_2[\numwp_{0}-1])\|.
                            \label{eq:ineqdistuav1move}
                        \end{salign}
                        \blt[rate uav1 \ra uav2] Then, from \eqref{eq:ineqdistuav1move}, \eqref{eq:ineqsumdist}, and Lemma \ref{lem:capac}, $\forall\exloc_1\in\bar{\pathsingle},$
                        \begin{salign}
                            \nonumber
                            & \|\exloc_1 - \liftfun^{(\numliftdown)}(\loc_2[\numwp_{0}-1])\|\\
                            \nonumber
                            & \overset{\eqref{eq:ineqdistuav1move}}{\leq} \|\liftfun^{(\numliftup)}(\loc_2[0])-\locdesuavotop\| + \|\locdesuavotop-\liftfun^{(\numliftdown)}(\loc_2[\numwp_{0}-1])\| \\
                            \nonumber
                            & \overset{\eqref{eq:ineqsumdist}}{\leq}\capacitydist^{-1}(2\minuavrate+\minuerate)+\capacitydist^{-1}(\minuavrate+\minuerate) \overset{\text{Lemma \ref{lem:capac}}}{\leq} \capacitydist^{-1}(\minuavrate).
                        \end{salign}
                        Therefore,
                        \begin{align}
                            \capacity(\exloc_1,\liftfun^{(\numliftdown)}(\loc_2[\numwp_{0}-1])) \geq \minuavrate,\label{eq:ratebtwuavstop}
                        \end{align}
                    \end{bullets}%
                \end{bullets}%
                which proves C2.
            \end{bullets}%
        \end{bullets}%

        \blt[part 3 - landing] \textbf{Landing subpath $(\trajuavsecdown)$:}
        \begin{bullets}
            \blt[remind] The landing path of UAV-2 is  $\trajuavsecdown = \{\liftfun^{(\numliftdown)}(\loc_2[\numwp_{0}-1]),\liftfun^{(\numliftdown-1)}(\loc_2[\numwp_{0}-1]),\ldots,\loc_2[\numwp_0-1]\}$.
            \blt[num lift des UAV-1] Let $\numlift_1^{\downarrow}\define\min\{\indlift\in\mathbb{N}:\liftfun^{(\indlift)}(\locdesuavo)=\liftfun^{(\indlift+1)}(\locdesuavo)\define\locdesuavotop\}$.

            \blt[des uav1 lower] If $\numliftdown\leq\numlift_1^{\downarrow}$, $\locdesuavo$ has a lower altitude than $\loc_2[\numwp_0-1]$ and $\liftfun^{(\numlift_1^{\downarrow}-\numliftdown)}(\locdesuavo)$ and $\loc_2[\numwp_0-1]$ are at the same height. The case when $\numliftdown>\numlift_1^{\downarrow}$ can be proven similarly.
            \begin{bullets}%
                \blt[choose path]
                \begin{bullets}%
                    \blt[both uavs go down] With $\numliftdown\leq\numlift_1^{\downarrow}$, the UAVs descend simultaneously  following the combined subpath
                    \begin{align}
                        \nonumber
                        \trajdouble_{1}^{\downarrow}\define\Big\{ & \left[\liftfun^{(\numlift_1^{\downarrow})}(\locdesuavo), \liftfun^{(\numliftdown)}(\loc_2[\numwp_{0}-1])\right],            \\
                        \nonumber
                                                                  & \left[\liftfun^{(\numlift_1^{\downarrow}-1)}(\locdesuavo),\liftfun^{(\numliftdown-1)}(\loc_2[\numwp_{0}-1])\right], \ldots, \\
                        \nonumber
                                                                  & \left[\liftfun^{(\numlift_1^{\downarrow}-\numliftdown+1)}(\locdesuavo),\liftfun^{(1)}(\loc_2[\numwp_{0}-1])\right],         \\
                                                                  & \left[\liftfun^{(\numlift_1^{\downarrow}-\numliftdown)}(\locdesuavo), \loc_2[\numwp_{0}-1]\right]\Big\}.
                    \end{align}
                    \blt[uav1 goes down]
                    After that UAV-1 continues its descent  while UAV-2 stays at $\loc_2[\numwp_0-1]$. The second combined subpath is then
                    \begin{align}
                        \nonumber
                        \trajdouble_{2}^{\downarrow}\define\Big\{ & \left[\liftfun^{(\numlift_1^{\downarrow}-\numliftdown)}(\locdesuavo), \loc_2[\numwp_{0}-1]\right],                      \\
                        \nonumber
                                                                  & \left[\liftfun^{(\numlift_1^{\downarrow}-\numliftdown-1)}(\locdesuavo), \loc_2[\numwp_{0}-1]\right], \ldots,            \\
                                                                  & \left[\liftfun^{(1)}(\locdesuavo), \loc_2[\numwp_{0}-1]\right], \left[\locdesuavo, \loc_2[\numwp_{0}-1] \right] \Big\}.
                    \end{align}
                \end{bullets}
                \blt[show feasible]
                \begin{bullets}
                    \blt[c1 rate bs \ra uav1] C1: Proving C1 amounts to showing that $\capacity(\locbs,   \liftfun^{(\indlift)}(\locdesuavo)) \geq 2\minuavrate$ for $\indlift=0,\ldots,\numlift_1^{\downarrow}$. It is easy to see that
                    \begin{align}
                        \nonumber
                        \capacity(\locbs,   \liftfun^{(\indlift)}(\locdesuavo)) & \geq \min\left[
                            \capacity(\locbs,   \liftfun^{(0)}(\locdesuavo))    ,
                            \capacity(\locbs,   \liftfun^{(\numlift_1^{\downarrow})}(\locdesuavo))
                        \right]                                                                   \\
                                                                                & =
                        \min \left[
                            \capacity(\locbs,   \locdesuavo)    ,
                            \capacity(\locbs,   \locdesuavotop)
                            \right].
                    \end{align}
                    From \eqref{eq:capontopdesdf}, it follows that $\capacity(\locbs,   \locdesuavo) \geq 2\minuavrate+\minuerate > 2\minuavrate$. On the other hand, from \eqref{eq:rateabovedesuav1}, it follows that $\capacity(\locbs,   \locdesuavotop) \geq 2\minuavrate$. Hence, C1 is proven for $\trajdouble_1^{\downarrow}$ and $\trajdouble_2^{\downarrow}$.

                    \blt[c2 rate uav1 \ra uav2]
                    \begin{bullets}
                        \blt[rates while uavs at same heights] C2: One has, $\forall\indlift=0,\ldots,\numliftdown$, $\|\liftfun^{(\numlift_1^{\downarrow}-\indlift)}(\locdesuavo)-\liftfun^{(\numliftdown-\indlift)}$ $(\loc_2[\numwp_0-1])\| \leq \|\locdesuavo-\loc_2[\numwp_0-1]\|$, then,
                        \begin{align}
                            \nonumber
                             & \capacity(\liftfun^{(\numlift_1^{\downarrow}-\indlift)}(\locdesuavo),\liftfun^{(\numliftdown-\indlift)}(\loc_2[\numwp_0-1])) \geq \capacity(\locdesuavo,\loc_2[\numwp_0-1]) \\
                             &
                            \overset{\eqref{eq:capontopdesdf}}{\geq}
                            \minuavrate + \minuerate > \minuavrate,\label{eq:ratebtwuavsgoup}
                        \end{align}
                        which proves C2 for $\trajdouble_1^{\downarrow}$.

                        \blt[rates uav1 goes down] In $\trajdouble_2^{\downarrow}$,  UAV-1 descends from $\liftfun^{(\numlift_1^{\downarrow}-\numliftdown)}(\locdesuavo)$ to $\locdesuavo$ while UAV-2 stays at $\loc_2[\numwp_0-1]$. Start by noting that $\liftfun^{(\indlift)}(\locdesuavo)$ is between $\locdesuavo$ and $\liftfun^{(\numlift_1^{\downarrow}-\numliftdown)}(\locdesuavo)$, $\forall \indlift=0,\ldots,\numlift^{\downarrow}_{1}-\numliftdown$, which means that
                        \begin{align}
                            \|\liftfun^{(\indlift)}(\locdesuavo)-\liftfun^{(\numlift_1^{\downarrow}-\numliftdown)}(\locdesuavo)\|^2 & \leq \|\locdesuavo-\liftfun^{(\numlift_1^{\downarrow}-\numliftdown)}(\locdesuavo)\|^2.\label{eq:linedownuav1}
                        \end{align}
                        From Pythagoras' theorem,
                        \begin{align}
                            \nonumber
                             & \|\liftfun^{(\indlift)}(\locdesuavo) - \loc_2[\numwp_0-1]\|^2 =  \|\liftfun^{(\indlift)}(\locdesuavo) -\liftfun^{(\numlift_1^{\downarrow}-\numliftdown)}(\locdesuavo)\|^2                                             \\
                            \nonumber
                             & ~~~+ \|\liftfun^{(\numlift_1^{\downarrow}-\numliftdown)}(\locdesuavo) - \loc_2[\numwp_0-1] \|^2                                                                                                                       \\
                            \nonumber
                             & \overset{\eqref{eq:linedownuav1}}{\leq} \|\locdesuavo -\liftfun^{(\numlift_1^{\downarrow}-\numliftdown)}(\locdesuavo)\|^2 + \|\liftfun^{(\numlift_1^{\downarrow}-\numliftdown)}(\locdesuavo) - \loc_2[\numwp_0-1]\|^2 \\
                             & = \|\locdesuavo - \loc_2[\numwp_0-1]\|^2.
                        \end{align}
                        It follows that $\capacity(\liftfun^{(\indlift)}(\locdesuavo),\loc_2[\numwp_0-1])\geq \capacity(\locdesuavo,\loc_2[\numwp_0-1]) \geq\minuavrate + \minuerate > \minuavrate,$
                        where the second inequality follows from \eqref{eq:capontopdesdf}. This proves C2 for $\trajdouble_2^{\downarrow}$.
                    \end{bullets}
                \end{bullets}
            \end{bullets}

            \blt[des uav2 lower] The case when $\numliftdown>\numlift^{\downarrow}_1$, i.e., $\locdesuavo$ has a higher altitude than $\loc_2[\numwp_0-1]$, can be proven similarly.

        \end{bullets}
    \end{bullets}
\end{extendedonly}

\begin{nonextendedonly}
    \cmt{intro} The following is a sketch of the proof of \thref{thm:guarantee}. The complete proof can be found in the extended version of this paper \cite{viet2026planningarxiv}.
    \begin{bullets}
        \blt[given path] Let $\pathsingle_2\define\{\loc_2[0],\loc_2[1],\ldots,\loc_2[\numwp_0-1]\}$ be the path for UAV-2 returned by Algorithm~\ref{algo:trajtwo}
        \begin{bullets}%
            \blt[max num lift up] and $\numlift\define \min\{\indlift\in\mathbb{N}:\liftfun^{(\indlift)}(\pathsingle_2)=\liftfun^{(\indlift+1)}(\pathsingle_2)\}$.
        \end{bullets}%
        \blt[UAV-2 path lifted to top] If Algorithm~\ref{algo:trajone} fails to provide a valid path, it necessarily fails to find a valid path at all iterations and, in particular, at the $\numlift$-th iteration. Therefore, to prove the theorem, it suffices to show that the algorithm succeeds if it reaches the $\numlift$-th iteration, which is the worst case.
        Equivalently, it has to be shown that Algorithm~\ref{algo:trajone} can find a path to $\mathbb{N} \times \destsetuo$  through the extended graph corresponding to $\liftfun^{(\numlift)}(\pathsingle_2)$. 
        \blt[combined path feasible] Since Algorithm~\ref{algo:trajone} is based on a shortest path algorithm,
        it will return a path to $\mathbb{N} \times \destsetuo$ if at least one such a path exists. 
        Therefore, to prove the theorem, it suffices to find \textit{any} path to $\mathbb{N} \times \destsetuo$. 
                \blt[proof structure]To establish the existence of such a path in the extended graph, the  proof designs a path for UAV-1 and shows that the resulting  combined path is valid.

    \end{bullets}
\end{nonextendedonly}

\begin{extendedonly}
    \section{Proof of Lemma \ref{lem:capac}}
    \label{sec:proof:capac}
    \begin{bullets}
        \blt[change variables] Let $\rateoverbw\define\minuavrate/\bandwidth>0$ and $\rateueoveruav\define\minuerate/\minuavrate$. Then $
            (2\minuavrate+\minuerate)/\bandwidth=(2 + \rateueoveruav)\minuavrate/\bandwidth=(\rateueoveruav+2)\rateoverbw$ and                $
            (\minuavrate+\minuerate)/\bandwidth = (1+\rateueoveruav)\minuavrate/\bandwidth=(\rateueoveruav+1)\rateoverbw$.
        \blt[change lhs] From \eqref{eq:tmia},
        \begin{changesv2}
            $\forall\rate>0$
        \end{changesv2}
        , it follows that
        \begin{align}
            \capacitydist^{-1}(\rate) & \define \left(\frac{\rxpowerfull}{2^{\rate/\bandwidth}-1}\right)^{1/\pathlossexp},\label{eq:capacityinverse}
        \end{align}
        where $\rxpowerfull\define\txpower\txgain\rxgain\wavelength^{\pathlossexp}/[\noisepower(4\pi)^{\pathlossexp}]$. As a result,
        \begin{align}
              & \capacitydist^{-1}(2\minuavrate+\minuerate) + \capacitydist^{-1}(\minuavrate+\minuerate)                                          \nonumber                                      \\
            \begin{intermed}
                =
            \end{intermed}
            \alignchar
            \begin{intermed}
                \left(\frac{\rxpowerfull}{2^{(2\minuavrate+\minuerate)/\bandwidth}-1}\right)^{1/2} + \left(\frac{\rxpowerfull}{2^{(\minuavrate+\minuerate)/\bandwidth}-1}\right)^{1/2}
            \end{intermed}
            \jumpline
            = & \left(\frac{\rxpowerfull}{2^{(\rateueoveruav+2)\rateoverbw}-1}\right)^{1/\pathlossexp} + \left(\frac{\rxpowerfull}{2^{(\rateueoveruav+1)\rateoverbw}-1}\right)^{1/\pathlossexp}.
            \label{eq:intransform}
        \end{align}
        \blt[1st transform] Since $\rateoverbw>0$, one has that $ (\rateueoveruav+2)\rateoverbw                                                           >(\rateueoveruav+1)\rateoverbw$, which in turn implies that
        \begin{subequations}
            \begin{align}
                \left(\frac{\rxpowerfull}{2^{(\rateueoveruav+2)\rateoverbw}-1}\right)^{1/\pathlossexp} & < \left(\frac{\rxpowerfull}{2^{\left(\rateueoveruav+1\right)\rateoverbw} - 1}\right)^{1/\pathlossexp}.
            \end{align}
        \end{subequations}
        It follows that
        \begin{align}
            \begin{aligned}
                 & \capacitydist^{-1}(2\minuavrate+\minuerate) + \capacitydist^{-1}(\minuavrate+\minuerate) \\& < 2\left(\frac{\rxpowerfull}{2^{\left(\rateueoveruav+1\right)\rateoverbw} - 1}\right)^{1/\pathlossexp}.
                \label{eq:ineqinverse}
            \end{aligned}
        \end{align}
        \blt[update] By hypothesis,
        \begin{salign}
            \minuerate&\geq\bandwidth\log_2\left(1+2^{\pathlossexp}\snrmincc\right)\\
            \nonumber
            &\hspace{2cm}-\bandwidth\log_2\left(1+\snrmincc\right)\\
            &=\bandwidth\log_2\left(2^{\pathlossexp}\left(2^{\minuavrate/\bandwidth}-1\right)+1\right)-\minuavrate\\
        \end{salign}
        Then,
        \begin{salign}
            \rateueoveruav=\frac{\minuerate}{\minuavrate} &\geq\frac{\bandwidth}{\minuavrate}\log_2\left(2^{\pathlossexp}\left(2^{\minuavrate/\bandwidth}-1\right)+1\right)-1\\
            &=\frac{1}{\rateoverbw}\log_2\left(2^{\pathlossexp}\left(2^{\rateoverbw}-1\right)+1\right)-1.
        \end{salign}
        Simple algebraic manipulations yield
        \begin{align}
            %
            %
            %
            \frac{2^{\pathlossexp}}{2^{\rateoverbw\left(\rateueoveruav+1\right)}-1} & \leq\frac{1}{2^{\rateoverbw}-1},
        \end{align}
        which implies that
        \begin{align}
            %
            2\left(\frac{\rxpowerfull}{2^{\left(\rateueoveruav+1\right)\rateoverbw}-1}\right)^{1/\pathlossexp} & \leq\left(\frac{\rxpowerfull}{2^{\rateoverbw}-1}\right)^{1/\pathlossexp}=\capacitydist^{-1}(\minuavrate).
            \label{eq:ineqabbrev}
        \end{align}
        %
        %
        %
        \blt[conclusion]
        Combining \eqref{eq:ineqinverse} and \eqref{eq:ineqabbrev} yields \eqref{eq:capinvsumbound}.
    \end{bullets}
\end{extendedonly}

\section{Proof of Theorem~\ref{prop:feasible-movingue}}
\label{sec:proof:feasible-movingue}
\begin{extendedonly}
    This proof follows a similar logic to the one in the proof of \thref{thm:guarantee}.
    \begin{bullets}%
        \blt[lift to top]Algorithm~\ref{algo:pathone-movingue}  fails iff there is no path for UAV-1 that results in a  feasible combined path at all iterations, in particular at the $\numlift$-th iteration, where $\numlift \define \min \{\indlift:\lifttrimfun^{(\indlift)}(\pathsingle_2)=\lifttrimfun^{(\indlift+1)}(\pathsingle_2)\}$.
        \blt[what to show] Therefore, it suffices to show that there exists a path for UAV-1 (not necessarily the one produced by Algorithm~\ref{algo:pathone-movingue}) that results in a feasible combined path when UAV-2 follows $\lifttrimfun^{(\numlift)}(\pathsingle_2)$.
        \blt[proof structure] To this end, a path will be designed for UAV-1 and the combined path $\pathuavs\define\{\exconfpt[0],\ldots,\exconfpt[\numwpue-1]\}$, where $\exconfpt[\indwp]\define[\exloc_1[\indwp], \exloc_2[\indwp]]$ and $\left\{
            \exloc_2[0],\ldots,\exloc_2[\numwpue-1]
            \right\}=\lifttrimfun^{(\numlift)}(\pathsingle_2)$
        will be shown to be feasible. This means that the following conditions hold:
        \begin{itemize}
            \item[C1:] The rate between the BS and UAV-1 is at least $2\minuavrate$, i.e., $\capacity(\locbs,\exloc_1[\indwp])\geq 2\minuavrate$ for all $\indwp$, and
            \item[C2:] The rate from UAV-1 to UAV-2 is at least $\minuavrate$, i.e., $\capacity(\exloc_1[\indwp],$ $\exloc_2[\indwp])\geq \minuavrate$ for all $\indwp$.
        \end{itemize}%
        \blt[path separation] To simplify the exposition, the path $\lifttrimfun^{(\numlift)}(\pathsingle_2)$ will be separated into the following subpaths:
        \begin{align}
            \trajuavsecup   \define \{  & \loc_2[0],\liftfun^{(1)}(\loc_2[0]),\ldots,\liftfun^{(\maxnumliftup)}(\loc_2[0])\},              \\
            \nonumber
            \trajuavsectop   \define \{ & \liftfun^{(\maxnumliftup)}(\loc_2[0]),\loctop_2[1],\ldots,\loctop_2[\numwpue-\maxnumliftup-1]\},
        \end{align}
        where $\pathsingle_2\define\{\loc_2[0],\loc_2[1],\ldots,\loc_2[\numwpue-1]\}$.
        For each subpath, a path will be designed for UAV-1 and the resulting combined path will be shown to satisfy C1 and C2. The designed path is illustrated in Fig.~\ref{fig:enter-label}.

        \blt[part 1 - take off] \textbf{Take-off subpaths ($\trajuavsecup$)}
        \begin{bullets}
            \blt[choose path] In the considered combined path, when UAV-2 follows $\trajuavsecup$, UAV-1 follows $\trajuavsecup$ as well. Recall that the minimum separation between the UAVs was disregarded in \eqref{eq:trajproblem} for simplicity. Thus, the combined path of UAV-1 and UAV-2 is given by
            \begin{align}
                \nonumber
                \trajdouble^{\uparrow}\define\Big\{ & \Big[\loc_2[0],                           \loc_2[0]\Big],                             \left[\liftfun^{(1)}(\loc_2[0]),           \liftfun^{(1)}(\loc_2[0])\right],           \ldots , & \\
                                                    & \left[\liftfun^{(\maxnumliftup)}(\loc_2[0]),\liftfun^{(\maxnumliftup)}(\loc_2[0])\right]\Big\}.
            \end{align}
            \blt[showing feasible]
            \begin{bullets}
                \blt[c1 - rate btw bs-uav1] C1: By hypothesis, $\higherbheight$ $\leq\capacitydist\inv(2\minuavrate)$.
                Thus, since $\loc_2[0]=\locbs$, $\forall\exloc_{1}\in\trajuavsecup$, it follows that $\|\locbs-\exloc_1\|\leq\|\locbs-\liftfun^{(\maxnumliftup)}($ $\loc_2[0])\|\leq\higherbheight  \leq\capacitydist\inv(2\minuavrate)
                $.
                Noting that $\capacity$ is a decreasing function of the distance yields
                \begin{salign}
                    \capacity(\locbs,\exloc_1) & \geq\capacity\left(\locbs,\liftfun^{(\maxnumliftup)}(\loc_2[0])\right) \geq2\minuavrate,
                \end{salign}
                which establishes C1.

                \blt[c2 - rate uav1-uav2] C2: trivial.
            \end{bullets}%
        \end{bullets}

        \blt[part 2 - op top] \textbf{Top subpath ($\trajuavsectop$)}:
        \begin{bullets}
            \blt[choose path]In the considered combined path, when UAV-2 follows $\trajuavsectop$, UAV-1 stays at $\liftfun^{(\maxnumliftup)}(\loc_2[0])$. This results in the following combined path
            \begin{align}
                \nonumber
                \trajdouble^{\uparrow}\define\Big\{ & \Big[\liftfun^{(\maxnumliftup)}(\loc_2[0]), \liftfun^{(\maxnumliftup)}(\loc_2[0])\Big], \left[\liftfun^{(\maxnumliftup)}(\loc_2[0]),\loctop_2[1]\right], & \\
                                                    & \ldots,\left[\liftfun^{(\maxnumliftup)}(\loc_2[0]),\loctop_2[\numwpue-\maxnumliftup-1]\right]\Big\}.
            \end{align}
            \blt[showing feasible]
            \begin{bullets}
                \blt[c1 - rate btw bs-uav1] C1: shown previously.

                \blt[c2 - rate uav1-uav2] C2: By definition of $\numlift$, $[0,0,1]\exloc_2=\higherbheight,\forall\exloc_2\in\trajuavsectop$. Also, $\loc_2[0]=\locbs$. This means that $\forall\exloc_2\in\trajuavsectop,\|\liftfun^{(\maxnumliftup)}(\loc_2[0])-\exloc_2\|\leq\cylinderradiusmin\leq\capacitydist\inv(\minuavrate)$. Then $\capacity(\liftfun^{(\maxnumliftup)}(\loc_2[0]),\exloc_2)\geq\minuavrate$, $\forall\exloc_2\in\trajuavsectop$.
            \end{bullets}%
        \end{bullets}
    \end{bullets}
\end{extendedonly}

\begin{nonextendedonly}
    The following is a sketch of the proof of \thref{prop:feasible-movingue}. The complete proof can be found in the extended version of this paper \cite{viet2026planningarxiv}. This proof follows a similar logic to the one in the proof of \thref{thm:guarantee}.
    \begin{bullets}%
        \blt[lift to top]Algorithm~\ref{algo:pathone-movingue} fails iff there is no path for UAV-1 that results in a  feasible combined path at all iterations, in particular at the $\numlift$-th iteration, where $\numlift \define \min \{\indlift:\lifttrimfun^{(\indlift)}(\pathsingle_2)=\lifttrimfun^{(\indlift+1)}(\pathsingle_2)\}$.
        \blt[what to show] Therefore, it suffices to show that there exists a path for UAV-1 (not necessarily the one produced by Algorithm~\ref{algo:pathone-movingue}) that results in a feasible combined path when UAV-2 follows $\lifttrimfun^{(\numlift)}(\pathsingle_2)$. The proof constructs such a path for UAV-1 and shows that the combined path is feasible.
        %

        %
    \end{bullets}
\end{nonextendedonly}

\section{Benchmark 6}
\label{sec:segment-benchmark}

\begin{extendedonly}
    \begin{bullets}
        \blt[overview] In this benchmark, for every $\numlocstoreplan$ time steps, the planner is given the next $\numknownuelocs$ locations of the user,
        $\numlocstoreplan \leq \numknownuelocs$.
        \blt[algorithm] The following steps will be iteratively implemented at time steps $\indwp\numlocstoreplan,\indwp=0,1,\ldots$.
        \begin{enumerate}
            \item[S1:] The planner uses the algorithms in
                Sec.~\ref{sec:staticUE-tentative} to plan a path for the UAVs to
                the nearest grid points where they can serve
                \begin{itemize}
                    \item All of the next $\numknownuelocs$ locations of the user.
                    \item If such grid points do not exist, the planner plans a
                          path to the nearest grid points where the UAVs can
                          serve the last $(\numknownuelocs-1)$ known locations
                          of the user, i.e., $\locue[\indwp\numlocstoreplan +
                                  \indaux], \indaux = 1,..., \numknownuelocs - 1$, and
                          so on.
                    \item In the most extreme case when the planner cannot find
                          grid points to simultaneously guarantee $\minuerate$
                          to multiple locations of the user, the planner plans a
                          path to the nearest grid point where the UAVs can
                          serve the last known location of the user, i.e.,
                          $\locue[\indwp\numlocstoreplan +
                                  \numknownuelocs - 1]$.
                \end{itemize}
            \item[S2:] If the length of the path obtained in Step~1 is less than
                $\numlocstoreplan$, the last configuration point of the path is
                repeated until the length of the path is $\numlocstoreplan$. If
                the length of the path obtained in Step~1 is greater than
                $\numlocstoreplan$, only the first $\numlocstoreplan$
                configuration points of the path are kept.
            \item[S3:] The last configuration point of the path obtained in
                Step~2 provides the start locations of the UAVs in the next
                iteration, i.e., at time step $(\indwp + 1)\numlocstoreplan$.
        \end{enumerate}
    \end{bullets}
\end{extendedonly}

\begin{nonextendedonly}
    \cmt{intro} The following is a short description of Benchmark 6. The complete explanation can be found in the extended version of this paper \cite{viet2026planningarxiv}.
    \begin{bullets}%
        \blt[overview] This benchmark replans the path  every $\numlocstoreplan$ time steps. When replanning, the next $\numknownuelocs$ locations of the UE are given, where $\numlocstoreplan \leq \numknownuelocs$.
        \blt[algorithm]Algorithm~\ref{algo:trajtwo} is modified by replacing $\destsetut$ with the intersection of the  destination sets that correspond to the last $\indknownuelocs$ of the given $\numknownuelocs$  UE locations.
        At time step $\indwp\numlocstoreplan$, $\indwp=0,1,\ldots$,
        this modified algorithm is run for $\indknownuelocs=\numknownuelocs, \numknownuelocs-1, \ldots, 1$ until a path for UAV-2 is found.
        The path for UAV-1 is obtained using Algorithm~\ref{algo:trajone}.


    \end{bullets}%
\end{nonextendedonly}


\end{document}

\subsection{Static UE - Globecom}
\begin{bullets}
    \blt[tested algorithms]
    \begin{bullets}
        \blt[why benchmarks] Due to the presence of buildings, no algorithm in
        the literature that \acom{we are} aware of can directly accommodate the
        considered simulation setup. Instead, three benchmarks will be
        considered:
        \blt[benchmarks]
        \begin{bullets}%
            \blt[b1] Benchmark 1 is a single UAV that takes off at
            $\locbs$ vertically to a height of \arev{$\higherbheight$} and then moves
            horizontally in straight line to the middle point
            between the BS and UE, i.e., to the point
            \arev{$([\locxbs,\locybs, \higherbheight]\transpose+[\locxue,\locyue,\higherbheight]\transpose)/2$}.
            \blt[b2] In Benchmark 2, two UAVs lift off at the BS
            to a height of $\higherbheight$ above it. Then UAV-1 moves to
            $(2/3)[\locxbs,\locybs,\higherbheight]\transpose+(1/3)[\locxue,\locyue,\higherbheight]\transpose$
            whereas UAV-2 moves to
            $(1/3)[\locxbs,$ $\locybs,\higherbheight]\transpose + (2/3)[\locxue,$ $\locyue,\higherbheight]\transpose$.
            \blt[b3] Benchmark 3 is similar to Benchmark 2, but
            UAV-1 remains  at
            $[\locxbs,\locybs,\higherbheight]\transpose$ after lift off whereas UAV-2 flies
            to $[\locxue,\locyue,\higherbheight]\transpose$. Under the
            hypotheses of \thref{prop:feasibleexists}, the
            resulting path is always feasible and yields a finite
            cost.
        \end{bullets}%
        All three benchmarks stop at the point of their trajectories
        where the UE rate is maximum. \ra \acom{stop when the rate between UAV-1 and the BS is less than $2\minuavrate$.}
        \blt[proposed] The proposed PRFI algorithm is implemented with a
        \acom{$12\times 12 \times 8$} grid $\grid$. The number of CPs is \acom{500} and the number of nearest neighbors is limited to~\acom{50}.
    \end{bullets}

    \blt[description of the experiments] \arev{Building absorption is set to 1 dB/m.}
    \begin{bullets}
        \blt[mean ue rate]Fig.~\ref{fig:meanuerate} plots the MC estimate
        of the expectation $\expected[\uerate(\confpt(t))]$ vs. $t$ for
        all compared algorithms when $\minuerate$ = 100 Mbps.  Since all
        UAVs start from the BS, the initial rate is the same for all
        algorithms. This rate is not 0 because in some MC realizations the
        BS and the UE may already be close to each other and possibly in
        LOS.  Observe that the proposed algorithm attains the target rate
        $\minuerate$ in just a few seconds. The rate continues increasing
        beyond $\minuerate$ due to two effects: first, due to the spatial
        discretization, there are no grid points $\confpt$ that exactly
        result in $\uerate(\confpt)=\minuerate$. Since the algorithm needs
        to find destinations with $\uerate(\confpt)\geq\minuerate$, this
        will generally result in destinations that exceed this
        minimum. Second, in those MC realizations where UAV-2 needs to
        turn a corner to reach the UE, the rate increases suddenly. This
        can be seen by plotting $\uerate(\confpt(t))$ for individual MC
        realizations. Note also that, in accordance with
        \thref{prop:feasibleexists}, Benchmark 3 eventually attains
        $\minuerate$. Finally, it is also observed that the rate of the
        benchmarks initially decreases. This is due to the take-off in
        those realizations where there are good propagation conditions
        directly between the BS and UE locations.

        \blt[mean time to connect] The second experiment studies the
        influence of $\minuerate$ onto the expectation of the connection
        time, which is the cost in Problem~\eqref{eq:trajproblem}.  To
        this end, Fig.~\ref{fig:timetoconnect} plots
        $\expected[\ttc(\trajectory)|\ttc(\trajectory)<\infty]$ as well as
        the probability of failure vs. $\minuerate$. The notation
        $\expected[\ttc(\trajectory)|\ttc(\trajectory)<\infty]$ indicates
        that only those MC realizations where the $\minuerate$ is attained
        are considered in the MC average. The probability of failure is
        defined as the fraction of MC realizations for which
        $\uerate(\confpt(t))<\minuerate~\forall t$.

        The proposed algorithm showcases a 0 probability of failure until
        $\minuerate\approx 160$ Mbps and, throughout this interval, its
        expected connection time is much smaller than for the
        benchmarks. Benchmark 3 also provides a null probability of
        failure until $\minuerate$ is too large, in which case the
        hypotheses of \thref{prop:feasibleexists} no longer hold.

        \begin{figure}
            \centering
            \includegraphics[width=1\linewidth]{figs/fig1n-mean-time.pdf}
            \captionof{figure}{Mean connection time  and probability of failure vs. the UE rate requirement $\minuerate$ averaged across 50 MC realizations.}
            \label{fig:timetoconnect}
        \end{figure}

        \blt[mean ue rate] Finally, Fig.~\ref{fig:vsdistance} plots
        $\expected[\ttc(\trajectory)|\ttc(\trajectory)<\infty]$ and the
        probability of failure vs. the distance $\|\locue-\locbs\|$. 50 MC
        realizations are generated for each considered value of
        $\|\locue-\locbs\|$ by first drawing $\locbs$ uniformly at random
        out of the buildings and then drawing $\locue$ on a circle of the
        appropriate radius and centered at $\locbs$. Consistent
        with Fig.~\ref{fig:timetoconnect}, PRFI yields a zero probability
        of failure throughout. At this $\minuerate$, Benchmark 1 yields a zero connection time but this is because it only succeeds
        to deliver this rate in the MC realizations where the BS and UE
        have already good direct communication conditions.

        \begin{figure}
            \centering
            \includegraphics[width=1\linewidth]{figs/fig3n-distance.pdf}
            \captionof{figure}{Mean connection time and probability of failure vs. the distance $\|\locue-\locbs\|$ averaged across 50 MC realizations ($\minuerate$ = 160 Mbps).}
            \label{fig:vsdistance}
        \end{figure}
    \end{bullets}
\end{bullets}%

\section{Figure - for moving UE}
\begin{bullets}

    \blt[exp7027]
    \begin{figure}
        \centering
        \begin{subfigure}[b]{0.49\textwidth}
            \centering
            \includegraphics[width=\textwidth]{figs/moving-ue/experiment_7027-0.pdf}
            \caption{}
        \end{subfigure}
        \hfill
        \begin{subfigure}[b]{0.49\textwidth}
            \centering
            \includegraphics[width=\textwidth]{figs/moving-ue/experiment_7027-1.pdf}
            \caption{\acom{this}}
        \end{subfigure}
        \hfill
        \begin{subfigure}[b]{0.49\textwidth}
            \centering
            \includegraphics[width=\textwidth]{figs/moving-ue/experiment_7027-2.pdf}
            \caption{\acom{this}}
        \end{subfigure}
        \hfill
        \begin{subfigure}[b]{0.49\textwidth}
            \centering
            \includegraphics[width=\textwidth]{figs/moving-ue/experiment_7027-3.pdf}
            \caption{}
        \end{subfigure}
        \hfill
        \begin{subfigure}[b]{0.49\textwidth}
            \centering
            \includegraphics[width=\textwidth]{figs/moving-ue/experiment_7027-7.pdf}
            \caption{$\minuerate=90$ Mbps.}
        \end{subfigure}
        \hfill
        \begin{subfigure}[b]{0.49\textwidth}
            \centering
            \includegraphics[width=\textwidth]{figs/moving-ue/experiment_7027-13.pdf}
            \caption{$\minuerate=90$ Mbps.}
        \end{subfigure}
        \caption{Performance metrics vs. $\minuerate$ across 200 MC realizations for the proposed PRFI only.}
    \end{figure}
    \begin{figure}
        \centering
        \begin{subfigure}[b]{0.49\textwidth}
            \centering
            \includegraphics[width=\textwidth]{figs/moving-ue/experiment_7027-8.pdf}
            \caption{$\minuerate=110$ Mbps.}
        \end{subfigure}
        \hfill
        \begin{subfigure}[b]{0.49\textwidth}
            \centering
            \includegraphics[width=\textwidth]{figs/moving-ue/experiment_7027-14.pdf}
            \caption{$\minuerate=110$ Mbps.}
        \end{subfigure}
        \hfill
        \begin{subfigure}[b]{0.49\textwidth}
            \centering
            \includegraphics[width=\textwidth]{figs/moving-ue/experiment_7027-9.pdf}
            \caption{$\minuerate=130$ Mbps.}
        \end{subfigure}
        \hfill
        \begin{subfigure}[b]{0.49\textwidth}
            \centering
            \includegraphics[width=\textwidth]{figs/moving-ue/experiment_7027-15.pdf}
            \caption{$\minuerate=130$ Mbps.}
        \end{subfigure}
        \caption{Mean UE rate and total transferred data vs. time averaged across 200 MC realizations for the proposed PRFI only.}
    \end{figure}

\end{bullets}

\section{TO DO}

\begin{itemize}
    \item simulations

          \begin{itemize}
              \item Static user
                    \begin{itemize}
                        \item Repeat simulations with the same environment as for static user
                    \end{itemize}

              \item Moving user
                    \begin{itemize}
                        \item repeat simulation with building heights
                        \item merge main into relays\_move
                        \item submit PR
                        \item merge PR
                        \item extend code to allow two ways of lifting, one with shortest path recomputation and the other with plain lifting
                        \item submit PR
                    \end{itemize}

          \end{itemize}

    \item writing
          \begin{itemize}
              \item Move the figures that are not selected to an appendix \arev{\ra done}
              \item Unify figure appearance \arev{can be done by re-run the simulations with static ue}
              \item write the simulation section
              \item make sure that we use the terms trajectory and path appropriately
          \end{itemize}
\end{itemize}

\end{document}